\documentclass[12pt]{amsart}
\usepackage{graphicx} 

\usepackage{amsaddr}
\usepackage{amscd,amssymb}
\usepackage[all]{xy}
\usepackage{a4wide}
\usepackage{enumitem}
\usepackage{graphics,graphicx}
\setlist[itemize]{leftmargin=*}
\setlist[enumerate]{leftmargin=*}
\setlist[description]{leftmargin=*}

\newtheorem{thm}{Theorem}
\newtheorem*{maschkestheorem*}{Maschke's theorem}
\newtheorem*{question*}{Question}
\newtheorem{prop}[thm]{Proposition}
\newtheorem{lem}[thm]{Lemma}

\theoremstyle{definition}
\newtheorem{defi}[thm]{Definition}
\newtheorem{exa}[thm]{Example}
\newtheorem{rem}[thm]{Remark}

\newtheorem*{conventions*}{Conventions}

\newcommand{\N}{\mathbb{N}}

\newcommand{\Set}{{\bf Set}}

\newcommand{\Mod}{{\rm Mod}}
\newcommand{\FMod}{{\rm FMod}}
\newcommand{\SMod}{{\rm SMod}}
\newcommand{\UMod}{{\rm UMod}}

\newcommand{\Grp}{{\bf Grp}}

\newcommand{\id}{{\rm id}}

\newcommand{\Hom}{{\rm Hom}}
\newcommand{\Res}{{\rm Res}}
\newcommand{\Ind}{{\rm Ind}}

\DeclareMathOperator{\C}{\mathsf C}
\DeclareMathOperator{\D}{\mathsf D}
\DeclareMathOperator{\J}{\mathsf J}
\DeclareMathOperator{\HH}{\mathsf H}
\DeclareMathOperator{\E}{\mathsf E}

\DeclareMathOperator{\Sub}{Sub}
\DeclareMathOperator{\Quot}{Quot}
\DeclareMathOperator{\Ker}{Ker}
\DeclareMathOperator{\Ima}{Im}
\DeclareMathOperator{\F}{\mathsf F}
\DeclareMathOperator{\G}{\mathsf G}

\DeclareMathOperator{\R}{\mathsf R}

\title{Separable functors and firm modules}

\author{Patrik Lundström}
\address{Department of Engineering Science,
University West, SE-46186 Trollh\"{a}ttan, Sweden}

\begin{document}

\begin{abstract}
We develop a theory of separable ring extensions 
and separable functors 
for nonunital rings in the setting of firm modules. 
We prove nonunital analogues of classical results 
on functorial separability 
and semisimplicity, and apply these results 
to obtain a locally unital version 
of Maschke's theorem for group rings.
\end{abstract}

\subjclass[2020]{Primary 16D20; 
Secondary 16D90, 16S35, 18A40.} 
\keywords{separable ring extension; 
separable functor; 
nonunital ring; 
firm module;
s-unital ring; 
locally unital ring; group ring; 
Maschke’s theorem}

\maketitle

\section{Introduction}\label{sec:introduction}

Let $A$ be a ring. By this we mean that 
$A$ is associative, but not necessarily 
unital.
Suppose that $B$ is another ring and 
$f : B \to A$ is a ring homomorphism.
In that case, we say that $A$ is a 
ring extension of $B$ 
and we indicate this by writing $A/B$.

Recall that $A/B$ 
is said to be separable 
if the multiplication map 
$\mu : A \otimes_B A \to A$, defined by
the additive extension of
$\mu(a \otimes a') = aa'$, for $a,a' \in A$,
has a section in the category of 
$A$-bimodules, that is, if
there is an $A$-bimodule map
$\sigma : A \to A \otimes_B A$ such that
$\mu \circ \sigma = \id_A$. 
Here, the $B$-bimodule structure on $A$
is defined via $f$.
Separable ring extensions generalize the classical notion of 
separable algebras over fields, which in turn extends 
separability of field extensions 
(see, e.g., \cite{wisbauer2016} and the references therein).

Separable ring extensions have been studied
by numerous authors (see, e.g., 
\cite{brzezinski2005,CLP,demeyer1971,
haefner2000,iglesias1998,kadison1999,
lundstrom2005,lundstrom2006,
miyashita1971,
nastasescu1989,nystedt2018,
rafael1990,TheohariApostolidi}).
One reason for the sustained interest in these extensions 
is that important properties of the ground ring,
such as semisimplicity,
are often inherited by the larger ring.
Perhaps the most classical example of this
is the following result.

\begin{maschkestheorem*}
Let $G$ be a finite group of 
order $|G|$. Let $B$ be a semisimple 
unital ring with $|G|$ invertible in $B$.
Then the group ring 
$A = B[G]$ is semisimple.
\end{maschkestheorem*}

Classically, this result is proved by a direct 
semisimplicity argument
(see, e.g., \cite[Thm.~(6.1)]{lam2001} or
Maschke's original proof \cite{maschke1899}). 
However, separable ring extensions provide a more conceptual explanation of Maschke’s theorem. 
In \cite{nastasescu1989} (see Definition~\ref{def:separablefunctor}), 
N{\v a}st{\v a}sescu, Van den Bergh and Van Oystaeyen introduce 
the notion of a separable functor. 
Such a functor reflects splittings: 
if a morphism or exact sequence splits after applying the functor, 
then it already splits beforehand. 
In the same article 
(\cite[Prop.~1.3]{nastasescu1989}), 
they prove the following result.

\begin{thm}\label{thm:main1}
Let $f : B \to A$ be a
homomorphism of unital rings.
Then $A/B$ is separable if and only if
the restriction functor
$\Res : {}_A \Mod \to 
{}_B \Mod$ is separable.
\end{thm}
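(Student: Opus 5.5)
We will use Rafael's theorem in its standard form for separable functors: a functor $\F$ with left adjoint $\G$ (or right adjoint) is separable if and only if the unit (respectively counit) of the adjunction splits naturally. The restriction functor $\Res : {}_A\Mod \to {}_B\Mod$ has the induction functor $\Ind = A\otimes_B - : {}_B\Mod\to{}_A\Mod$ as left adjoint. The counit is $\varepsilon_M : A\otimes_B M\to M$, $a\otimes m\mapsto am$, for $M\in{}_A\Mod$. Thus $\Res$ is separable if and only if there is a natural transformation $\nu : \id_{{}_A\Mod}\to \Ind\circ\Res$ with $\varepsilon\circ\nu = \id$. Rather than quote Rafael's theorem, we could also argue directly with the definition of separability, but we expect the adjunction formulation to be cleaner, so we plan to use it.

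($\Rightarrow$) Suppose $\sigma : A\to A\otimes_B A$ is an $A$-bimodule section of $\mu$, and put $e=\sigma(1)=\sum_i x_i\otimes y_i$. Then $ae=ea$ for all $a\in A$, and $\sum_i x_iy_i=1$. For $M\in{}_A\Mod$ we define
\[\nu_M : M\to A\otimes_B M,\qquad \nu_M(m)=\sum_i x_i\otimes y_i m.\]
We check three things.
\begin{itemize}
\item $\nu_M$ is well defined. It is the composite of $m\mapsto e\otimes_A m$ with the canonical isomorphism $(A\otimes_B A)\otimes_A M\cong A\otimes_B M$, so no issue arises with the choice of representation of $e$.
\item $\nu_M$ is $A$-linear. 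This follows from $ae=ea$.
\item $\nu$ is natural in $M$, and $\varepsilon_M\nu_M(m)=\sum_i x_iy_im=m$.
\end{itemize}
So the counit splits, and $\Res$ is separable. Alternatively, one can verify directly that whenever $\Res(g)$ has a left inverse $h$, the map $m\mapsto \sum_i x_i h(y_i m)$ is an $A$-linear left inverse of $g$.

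($\Leftarrow$) Suppose $\Res$ is separable, so that by Rafael's theorem there is a natural $\nu$ with $\varepsilon\circ\nu=\id$. Apply this to $M=A$ viewed as a left $A$-module. This gives a left $A$-linear map $\nu_A : A\to A\otimes_B A$ with $\mu\circ\nu_A=\id_A$, since $\varepsilon_A=\mu$. The main point, and the step we expect to be the real obstacle, is to show that $\nu_A$ is also right $A$-linear. For this we use naturality. For each $a\in A$, right multiplication $r_a : A\to A$, $x\mapsto xa$, is a left $A$-module endomorphism of $A$. Its image under $\Ind\circ\Res$ is $\id\otimes r_a$. Naturality of $\nu$ gives
\[\nu_A(xa)=(\id\otimes r_a)(\nu_A(x))=\nu_A(x)\,a.\]
Hence $\nu_A$ is an $A$-bimodule map, and $\sigma=\nu_A$ is the required section, so $A/B$ is separable. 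Unitality is used here: it makes $A$ a unital left $A$-module, and it gives $\varepsilon_A=\mu$ under the identification $A\otimes_A A\cong A$.
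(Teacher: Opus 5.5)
Your proof is correct, but it is organized differently from the paper. The paper does not prove this statement itself; it cites \cite[Prop.~1.3]{nastasescu1989}. Its own argument is the one for the firm generalization, Theorem~\ref{thm:main3}, which specializes to this statement when $f(1_B)=1_A$: unital modules are then exactly the firm ones, and $f$ is left firm.

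\textbf{How the paper argues.} It verifies the separability axioms by hand, using the characterization (SF3) of Proposition~\ref{prop:newsep}.
\begin{itemize}
\item For ($\Rightarrow$) it writes down $\R_{M,N}(g)(m)=\sum_i x_i\, g(y_i m)$ and checks (SF1) and (SF3).
\item For ($\Leftarrow$) it takes the $B$-linear section $a\mapsto 1\otimes a$ of $\Res(\mu_{A/B})$ and sets $\sigma$ to be $\R_{A,A\otimes_B A}$ of that section. It then obtains $\mu_{A/B}\circ\sigma=\id_A$ and right $A$-linearity from (SF3), applied to the right multiplication maps $\alpha_a,\beta_a$.
\end{itemize}

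\textbf{How your route relates.} You invoke Rafael's adjoint-functor criterion for $\Ind\dashv\Res$, which absorbs exactly these verifications.
\begin{itemize}
\item Your $\nu$ yields the paper's map as $\R_{M,N}(g)=\varepsilon_N\circ\Ind(g)\circ\nu_M$, which is the same formula $\sum_i x_i g(y_i m)$.
\item Conversely, the standard proof of Rafael's theorem produces $\nu_A=\R_{A,A\otimes_B A}(a\mapsto 1\otimes a)$. This is precisely the paper's $\sigma$.
\item Your naturality argument with $r_a$ is the counterpart of the paper's use of (SF3) with $\alpha_a,\beta_a$.
\end{itemize}
Your version is shorter and more conceptual. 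The paper's direct version never needs the adjunction. That matters in the nonunital firm setting, where the tensor--hom adjunction gives $\Hom_A(A,-)$, not $\Res$, as right adjoint of $A\otimes_B-$, so $\Ind\dashv\Res$ would have to be re-established using firmness.

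\textbf{Small corrections.}
\begin{itemize}
\item Your opening general statement of Rafael's theorem has unit and counit interchanged. A functor with a \emph{right} adjoint is separable iff the unit has a natural retraction. A functor with a \emph{left} adjoint, such as $\Res$, is separable iff the counit has a natural section. The version you actually apply is the correct one.
\item The counit $\varepsilon_A$ is literally $\mu_{A/B}\colon A\otimes_B A\to A$; no identification $A\otimes_A A\cong A$ is involved. Unitality enters through $\varepsilon_M\nu_M(m)=\sum_i x_iy_im=1\cdot m=m$.
\item Your ``alternative'' remark, that $m\mapsto\sum_i x_i h(y_i m)$ is an $A$-linear left inverse of $g$, only shows that $\Res$ reflects retractions. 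It does not establish (SF1)--(SF2). A genuinely direct proof would be the paper's check of (SF1) and (SF3) for that formula.
\end{itemize}
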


Specializing to group rings, by
\cite[p.~41]{demeyer1971}, we have:

\begin{thm}\label{thm:main2}
Let $G$ be a finite group 
of order $|G|$.
Let $B$ be a unital ring such that 
$|G|$ is invertible in $B$. 
Let $A$ denote the group ring $B[G]$. 
Then $A/B$ is separable.
\end{thm}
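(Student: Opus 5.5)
The plan is to exhibit an explicit separability idempotent and build the section $\sigma$ from it. Write elements of $A = B[G]$ as finite sums $\sum_{g\in G} b_g g$. Since $B$ is unital and $|G|$ is invertible in $B$, the element $n^{-1} := |G|^{-1}\cdot 1_B$ lies in $B$. Because $|G|\cdot 1_B$ is an integer multiple of $1_B$, it is central in $B$, so $n^{-1}$ is central as well. Also, elements of $G$ commute with elements of $B$ inside $B[G]$. We set
\[
e = n^{-1}\sum_{g\in G} g \otimes g^{-1} \in A\otimes_B A ,
\]
and define $\sigma : A \to A\otimes_B A$ by $\sigma(a) = ae$.

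The first step is to check that $\mu\circ\sigma = \id_A$. Since $\sigma$ is built as left multiplication by $e$, this reduces to $\mu(e) = 1_A$. Indeed,
\[
\mu(e) = n^{-1}\sum_{g} g g^{-1} = n^{-1}\,|G|\cdot 1 = 1 .
\]
By construction, $\sigma$ is a left $A$-module map.

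The main step is right $A$-linearity, which amounts to showing $ae = ea$ for all $a\in A$. It suffices to verify this for $a = b\in B$ and for $a = h\in G$, since these generate $A$ additively and multiplicatively. For $h\in G$ we reindex with $k = hg$:
\[
he = n^{-1}\sum_g hg\otimes g^{-1} = n^{-1}\sum_k k\otimes k^{-1}h = eh .
\]
For $b\in B$ we compute
\[
be = n^{-1}\sum_g bg\otimes g^{-1} = n^{-1}\sum_g g\, b\otimes g^{-1} = n^{-1}\sum_g g\otimes b g^{-1} = n^{-1}\sum_g g\otimes g^{-1} b = eb .
\]
This uses three facts: $b$ commutes with $g$, the tensor is over $B$ so $b$ can be moved across it, and $n^{-1}$ is central. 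Then $\sigma(aa') = aa'e = aea'$, so $\sigma$ is an $A$-bimodule map with $\mu\circ\sigma=\id_A$, and hence $A/B$ is separable.

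The only delicate point is keeping track of where the scalar $n^{-1}$ and the elements of $B$ sit relative to the tensor sign. This is exactly where centrality of $n^{-1}$ and the commutation of $B$ with $G$ in $B[G]$ are needed. Everything else is the reindexing computation over the finite group.
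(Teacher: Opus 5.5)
Your proof is correct and is the standard separability-idempotent argument. The paper does not reprove this statement but cites DeMeyer--Ingraham; its proof of Theorem~\ref{thm:main5} uses exactly your section $a\mapsto\sum_{g\in G} ag\otimes |G|^{-1}g^{-1}$, in a locally unital form where $|G|^{-1}$ is replaced by the inverse of $|G|$ times a local unit, and the only difference is cosmetic: you get right $A$-linearity because your element commutes with $B$ and $G$, whereas the paper reindexes $\sigma(a)bh$ directly.
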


By Theorem~\ref{thm:main1}, separability of 
$A/B$ is equivalent to separability 
of the restriction functor
$\Res : {}_A \Mod \to {}_B \Mod$.
Consequently, every exact sequence of $A$-modules 
that splits as a sequence of $B$-modules 
already splits $A$-linearly.
If $B$ is semisimple,
separability of $A/B$ then implies that $A$ is semisimple,
thereby recovering Maschke’s theorem as a consequence of 
separability and functorial splitting.

The rings and modules considered in the classical situation 
above are unital. However, many important algebraic objects 
are defined as infinite direct sums or arise from local constructions, 
and therefore do not in general possess a global unit.
This naturally raises  
the following question: 
\begin{quote}
\emph{Is it possible to extend the classical separable functorial machinery from the unital setting to appropriate categories of modules over nonunital rings?}
\end{quote}

To our knowledge, there appear to be very few results on 
separability of nonunital ring extensions. 
The only references we have found are 
\cite{brzezinski2005}, where Brzeziński, Kadison, and Wisbauer 
study connections between separability of 
\(A\)-rings and \(A\)-corings, 
and \cite{bohm2013}, where 
Böhm and Gómez-Torrecillas 
study splittings of the comultiplication map 
in the context of coalgebras.

Apart from these works, we obtained in 
\cite[Proposition~26]{CLP} a nonunital version of 
Theorem~\ref{thm:main1}
under the assumption that
\(A\) and \(B\) are rings with 
enough idempotents.
Recall from Fuller~\cite{fuller1976} that 
a ring \(A\)
has enough idem\-potents if there exists a 
set \(\{e_i\}_{i\in I}\) of 
pairwise orthogonal idempotents in \(A\)
such that
\(A=\oplus_{i\in I}Ae_i=\oplus_{i\in I}e_iA\).
The module categories considered in 
\cite{CLP}
are the categories \({}_A\UMod\) of 
unitary
left \(A\)-modules, that is, modules 
\(M\) satisfying \(AM=M\), 
where \(A\) is assumed to be a ring 
with enough idempotents.

In the present article, we extend the 
separability result of \cite{CLP}
to the more general setting of 
firm modules introduced by Quillen \cite{quillen1997} 
(see Theorem~\ref{thm:main3}).
Recall that if \(A\) is a ring and \(M\) is a 
left \(A\)-module, then \(M\) is said to 
be firm if the multiplication map
$A \otimes_A M \to M$, induced by the
additive extension of
$A \otimes_A M \ni a \otimes m \mapsto am \in M$,
for $a \in A$ and $m \in M$,
is an isomorphism.
We denote the category of such modules by 
\({}_A \FMod\).
The ring $A$ is said to be firm if it is 
firm as a left $A$-module.
Following Mar{\'\i}n and Laan \cite{marin2023},
we say that a ring homomorphism 
\(f : B \to A\) is left firm if \(A\), 
considered as a left \(B\)-module via~\(f\), 
is firm.
We prove the following firm version of 
Theorem~\ref{thm:main1}:

\begin{thm}\label{thm:main3}
Let $A$ be a firm ring and suppose that
\(f : B \to A\) is a left firm 
ring homomorphism. Then \(A/B\) is
separable if and only if the 
restriction functor
\(\Res_f : {}_A \FMod \to {}_B \FMod\) 
is separable.
\end{thm}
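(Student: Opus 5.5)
The plan is to prove both implications directly from the definition of a separable functor (Definition~\ref{def:separablefunctor}), after two preliminary checks. The forward implication is an explicit construction. For the converse, I would identify an induction functor left adjoint to \(\Res_f\) and use Rafael's criterion, which is purely categorical and so holds verbatim for any adjoint pair.

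\emph{Step 1: \(\Res_f\) lands in \({}_B\FMod\).} Let \(M\) be a firm left \(A\)-module. Since \(f\) is left firm, \(B\otimes_B A\cong A\). Using associativity of the tensor product we get
\[
B\otimes_B M\cong B\otimes_B (A\otimes_A M)\cong (B\otimes_B A)\otimes_A M\cong A\otimes_A M\cong M .
\]
One checks that this composite is the multiplication map. This is where left firmness of \(f\) is used.

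\emph{Step 2: the induction functor and its adjunction.} Define \(\Ind=A\otimes_B - : {}_B\FMod\to{}_A\FMod\). It is well defined because
\[
A\otimes_A(A\otimes_B N)\cong (A\otimes_A A)\otimes_B N\cong A\otimes_B N,
\]
using that \(A\) is firm. I would then prove a natural bijection \(\Hom_A(A\otimes_B N,M)\cong \Hom_B(N,M)\) for firm \(N\) and \(M\).
\begin{itemize}
\item A \(B\)-linear map \(g\) is sent to \(a\otimes n\mapsto a\,g(n)\).
\item An \(A\)-linear map \(h\) is sent to the composite \(N\cong B\otimes_B N\to M\), \(b\otimes n\mapsto h(f(b)\otimes n)\). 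This uses firmness of \(N\), since there is no \(1\) with which to form \(1\otimes n\).
\end{itemize}
Checking that these are mutually inverse again uses the firmness isomorphisms. The counit is the multiplication \(\varepsilon_M : A\otimes_B M\to M\). I expect this step to be the main technical obstacle, since every identity involving a unit must be rerouted through the firmness isomorphisms.

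\emph{Step 3: (\(\Rightarrow\)).} Let \(\sigma\) be a bimodule section of \(\mu\). For a \(B\)-linear map \(g:M\to N\) between firm \(A\)-modules, define \(P(g)\) as the composite
\[
M\cong A\otimes_A M\xrightarrow{\sigma\otimes 1} A\otimes_B A\otimes_A M\cong A\otimes_B M\xrightarrow{1\otimes g}A\otimes_B N\xrightarrow{\varepsilon_N}N .
\]
\begin{itemize}
\item \(P(g)\) is \(A\)-linear because \(\sigma\) is left \(A\)-linear.
\item The \(B\)-linearity of \(g\) is needed for \(1\otimes g\) to be defined on \(A\otimes_B M\).
\item Naturality in \(M\) and \(N\) is clear from the construction.
\item If \(g\) is \(A\)-linear, then \(\varepsilon_N\circ(1\otimes g)=g\circ\varepsilon_M\). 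Right \(A\)-linearity of \(\sigma\) together with \(\mu\sigma=\id\) then gives \(\varepsilon_M\circ(\sigma\otimes 1)=\mu\otimes 1\), which is the firmness isomorphism. Hence \(P(g)=g\).
\end{itemize}

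\emph{Step 4: (\(\Leftarrow\)).} By Rafael's theorem, separability of the right adjoint \(\Res_f\) yields a natural transformation \(\nu:\id\to \Ind\Res_f\) with \(\varepsilon\circ\nu=\id\). Apply this to the firm module \(M=A\) and set \(\sigma=\nu_A : A\to A\otimes_B A\).
\begin{itemize}
\item \(\sigma\) is left \(A\)-linear and satisfies \(\mu\sigma=\id_A\).
\item For right linearity, note that right multiplication \(r_a:x\mapsto xa\) is an endomorphism of \(A\) in \({}_A\FMod\). Naturality of \(\nu\) with respect to \(r_a\) gives \(\sigma(xa)=\sigma(x)a\).
\end{itemize}
Thus \(\sigma\) is an \(A\)-bimodule section of \(\mu\), and \(A/B\) is separable.
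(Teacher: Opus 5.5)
Your proof is correct. The direction ``$A/B$ separable $\Rightarrow$ $\Res_f$ separable'' is the paper's own: your $P(g)$ is exactly $\R^{\Res_f}_{M,N}(g)=\mu_{A,N}\circ i_N\circ(\id_A\otimes g)\circ(\id_A\otimes\mu_{A,M})\circ(\sigma\otimes\id_M)\circ\mu_{A,M}^{-1}$. The paper verifies (SF1) and (SF3) elementwise, whereas you verify (SF1) and naturality; these are equivalent by Proposition~\ref{prop:newsep}. There is a small slip in your (SF1) check. The identity you need is that the composite $A\otimes_A M\to A\otimes_B M\to M$ equals $\mu_{A,M}$, because $\mu_{A/B}\circ\sigma=\id_A$; it is not ``$\mu\otimes 1$''.

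The converse is where you genuinely differ from the paper. The paper never builds an adjunction. It writes down $\sigma'=(f\otimes\id_A)\circ\mu_{B,A}^{-1}:A\to A\otimes_B A$, which is left $B$-linear, right $A$-linear, and satisfies $\mu_{A/B}\circ\sigma'=\id_A$. It then checks that $A$ and $A\otimes_B A$ lie in ${}_A\FMod$ and sets $\sigma=\R_{A,A\otimes_B A}(\sigma')$. Both $\mu_{A/B}\circ\sigma=\id_A$ and right $A$-linearity follow from (SF3), the latter via right multiplications, just as in your naturality argument.

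Your $\sigma'$ is precisely the unit $\eta_{\Res_f(A)}$ of your adjunction. The standard proof of Rafael's theorem defines $\nu_M=\R_{M,A\otimes_B M}(\eta_{\Res_f(M)})$. So the paper's argument is Rafael's proof unwound at the single object $A$, and it yields the same $\sigma$.

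Your route gives a conceptual explanation and a reusable adjunction $\Ind_f\dashv\Res_f$ on firm modules. It also exhibits separability of $\Res_f$ as a natural splitting of the counit $A\otimes_B M\to M$ over all of ${}_A\FMod$. The price is your Step~2, the firm version of the tensor--hom adjunction with $\mu_{B,N}^{-1}$ replacing $1\otimes n$. The paper's direct argument avoids this entirely and needs only that $A\otimes_B A$ is firm.

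Your Step~1, via associativity of $\otimes$, also differs slightly from the paper, and is somewhat shorter. The paper proves firmness of $\Res_f(M)$ in a separate preliminary proposition by constructing a section of $\mu_{B,M}$ and invoking Lemma~\ref{lem:Mfirmifsection}.
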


We then apply this result to the study of
semisimple modules in the category 
${}_A \FMod$
when $A$ is a left s-unital ring
(see Theorem~\ref{thm:main4}).
Recall from Tominaga~\cite{tominaga1976}
that a left $A$-module $M$ is 
said to be s-unital, if
for every $m \in M$, one has $m \in Am$.
A ring is said to be left s-unital if it is s-unital
as a left module over itself. 
We say that a ring homomorphism $f : B \to A$ is
left s-unital if $A$, regarded as a left $B$-module via $f$,
is s-unital.
We prove the following s-unital
semisimplicity result:

\begin{thm}\label{thm:main4}
Let $A$ and $B$ be left s-unital rings
with $B$ left semisimple.
Suppose that $f : B \to A$ is a left s-unital 
ring homomorphism such that $A/B$ 
is separable.
Then $A$ is left semisimple.
\end{thm}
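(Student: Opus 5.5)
The plan is to reduce the statement to Theorem~\ref{thm:main3} and then run the classical splitting argument inside ${}_A\FMod$. Here ``left semisimple'' will mean that every submodule of an s-unital (equivalently firm) left module splits off as a direct summand; for left s-unital rings this is the same as $A$ being a direct sum of simple s-unital left ideals.

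\textbf{Step 1: firmness.} I first check that the hypotheses of Theorem~\ref{thm:main3} hold. Over a left s-unital ring $R$, every s-unital left $R$-module $M$ is firm. The standard argument uses Tominaga's lemma: for any finite set $m_1,\dots,m_n\in M$ there is $e\in R$ with $em_i=m_i$ for all $i$. One then shows that $R\otimes_R M\to M$ is injective by writing an element $\sum a_i\otimes m_i$ in the kernel and using a common local unit $e$ for the $a_i$ on the left (since $R$ is left s-unital):
\[
\sum a_i\otimes m_i=\sum e a_i\otimes m_i=e\otimes \sum a_i m_i=0 .
\]
So $A$ is firm. Also $A$ is s-unital as a left $B$-module, so $f$ is left firm. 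Conversely, every firm module $M$ satisfies $AM=M$, and over an s-unital ring this yields s-unitality of $M$ (again via Tominaga's lemma). Hence ${}_A\FMod$ is exactly the category of s-unital left $A$-modules, and similarly for $B$.

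\textbf{Step 2: separability of restriction.} By Theorem~\ref{thm:main3}, $\Res_f:{}_A\FMod\to{}_B\FMod$ is separable. Now let $N\subseteq M$ be an $A$-submodule of a firm (s-unital) $A$-module $M$. Restriction lands in s-unital $B$-modules, because for $m\in M$ we have $m=am$ for some $a\in A$, and $a=f(b)a$ for some $b\in B$, so $m=bm$. The submodule $N$ is also s-unital as a $B$-module, since $n=an$ with $a\in A$ and then $a=f(b)a$ as before.

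Since $B$ is left semisimple, the inclusion $N\to M$ splits $B$-linearly. Write $\pi: M\to N$ for a $B$-linear retraction, so that $\Res_f(\pi)\circ\Res_f(\iota)=\id$. The defining property of a separable functor (Definition~\ref{def:separablefunctor}) then gives an $A$-linear retraction of $\iota$. Thus every submodule of a firm $A$-module is a direct summand. In particular this applies to every left ideal of $A$ that is s-unital, and more generally it shows that ${}_A\FMod$ is semisimple.

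\textbf{Step 3 and the main obstacle.} It remains to pass from this splitting statement to a direct-sum decomposition of $A$ into simple s-unital left ideals. This is where I expect the main obstacle, which is the subtlety of which submodules live in ${}_A\FMod$: an arbitrary left ideal $L\subseteq A$ need not be s-unital. I would handle this by replacing $L$ with $AL$ (or its s-unital part) and using the local units. Given a finite set in $A$, choose $e\in A$ acting as a left identity on it. Then $Ae$ is a cyclic s-unital module, and the splitting property makes it a sum of simple modules by the usual Zorn's lemma argument. Hence $A=\sum_e Ae$ is a sum of simple submodules and therefore a direct sum of them. This Zorn-type step, adapted to the nonunital setting, is the point that needs care.
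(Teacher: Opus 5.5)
Your proof is correct and follows the paper's route. You verify the firmness hypotheses (which the paper leaves implicit), invoke Theorem~\ref{thm:main3}, and lift a $B$-linear retraction of the inclusion to an $A$-linear one exactly as in Propositions~\ref{prop:retractionsection}(a) and~\ref{prop:Fsemisimple}; the equivalence you build into your definition of left semisimplicity is what Propositions~\ref{prop:semisimpleequiv} and~\ref{prop:Asemisimpleprojective} supply, and you need it to get the $B$-linear splitting from $B$ being a direct sum of simples. The obstacle you flag in Step~3 is illusory: every submodule of an s-unital module is s-unital, as you yourself use in Step~2, so every left ideal of $A$ is s-unital, and Lemma~\ref{lem:simplesubmodule} with (iii)$\Rightarrow$(i) of Proposition~\ref{prop:semisimpleequiv} applies directly to $M=A$ without the detour through the modules $Ae$.
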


This allows us to prove a locally unital version of Maschke's
theorem (see Theorem~\ref{thm:main6}). 
Recall from \'{A}nh and 
M\'{a}rki~\cite{anh1987} that $A$
is called a ring with local units if for each
finite subset $X \subseteq A$ there exists
an idempotent $e \in A$ such that 
$ex = x = xe$ for all $x \in X$.
We first establish the following locally unital
analogue of Theorem~\ref{thm:main2}:

\begin{thm}\label{thm:main5}
Let $G$ be a finite group 
of order $|G|$.
Let $B$ be a ring with local units
satisfying $|G|B = B$. 
Let $A$ denote the group ring $B[G]$. 
Then $A/B$ is separable.
\end{thm}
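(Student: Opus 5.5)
The plan is to imitate the classical separability idempotent $\frac{1}{|G|}\sum_{g\in G} g\otimes g^{-1}$ for $B[G]$. Neither $1$ nor the group elements $g$ lie in $A=B[G]$, so we replace the global unit by local units $e\in B$ and divide $e$, rather than $1$, by $|G|$.

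\emph{Step 1: division by $|G|$ is unique.} First I would show that $B$ has no additive $|G|$-torsion. Suppose $t\in B$ with $|G|t=0$. Choose an idempotent $e\in B$ with $et=t$. Since $|G|B=B$, write $e=|G|d$ with $d\in B$. Then $t=et=d(|G|t)=0$. Hence for each $b\in B$ there is a unique element $b/|G|\in B$ with $|G|\cdot(b/|G|)=b$. Uniqueness gives $b'(b/|G|)=(b'b)/|G|=(b'/|G|)b$ for all $b,b'\in B$. Also, each idempotent $e\in B$ acts on $A$ coefficientwise, and the elements $e$, $b\in B$ commute with the formal symbols $g\in G$ inside products in $A$. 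In particular $bg\cdot b'=bb'g$, and $x\,g\,b=x\,b\,g$ for $x\in A$. Finally, every finite subset of $A$ has a common two-sided local unit $e\in B$: take a local unit for the finite set of coefficients involved.

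\emph{Step 2: definition of $\sigma$.} For $x\in A$ I would choose an idempotent $e\in B$ with $ex=x=xe$ and put
\[
\sigma(x)=\sum_{g\in G} xg\otimes_B g^{-1}\,(e/|G|).
\]
Here $xg$ and $g^{-1}(e/|G|)$ are genuine elements of $A$, namely the element $g^{-1}$ of $G$ with coefficient $e/|G|$.

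The main obstacle is to show that $\sigma(x)$ does not depend on $e$. Suppose $e'$ is another such unit. I would pick an idempotent $f\in B$ that is a local unit for $e$, $e'$ and $x$, and compare each with $f$. Using $x=xe$ and moving $e\in B$ across the tensor sign, we get
\[
xg\otimes g^{-1}(f/|G|)=xge\otimes g^{-1}(f/|G|)=xg\otimes g^{-1}\,e(f/|G|).
\]
By Step 1, $e(f/|G|)=(ef)/|G|=e/|G|$. The same argument applies to $e'$, so both choices give the same value. The map $\sigma$ is additive, because for $x,y\in A$ we may use one common unit for $x$, $y$ and $x+y$.

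\emph{Step 3: $\sigma$ is a splitting.} The product is
\[
\mu\sigma(x)=\sum_{g\in G} x\,(e/|G|)=|G|\,x\,(e/|G|)=xe=x.
\]
Left $A$-linearity, $\sigma(ax)=a\sigma(x)$, is immediate once we use a common unit for $a$, $x$ and $ax$. For right linearity it suffices, by additivity, to treat $a=bh$ with $b\in B$ and $h\in G$, again taking one unit $e$ for everything involved. First I would move $b$ across the tensor sign and commute it past group elements. Then I would reindex by $k=hg$, which turns $g^{-1}$ into $k^{-1}h$. This gives
\[
\sigma(x)a=\sum_{k\in G} xhbk\otimes k^{-1}(e/|G|)=\sigma(xbh)=\sigma(xa).
\]
Therefore $\sigma$ is an $A$-bimodule section of $\mu$, and $A/B$ is separable.
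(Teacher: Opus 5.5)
Your proof is correct and is essentially the paper's argument. Your $e/|G|$ is exactly the paper's $(|G|e)^{-1}$, the inverse of $|G|e$ in the corner ring $eBe$, and the definition of $\sigma$, the comparison via a larger idempotent $f$, the splitting computation and the right-linearity check all match. The paper obtains this element from invertibility in $eBe$ rather than from your torsion-freeness and unique-division observation; your route makes the identities $e(f/|G|)=e/|G|$ and $(e/|G|)b=b(e/|G|)$ immediate.

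One small slip: in the right-linearity step the reindexing must be $k=h^{-1}g$ (i.e.\ $g=hk$), which turns $g^{-1}h$ into $k^{-1}$ and $xbg$ into $xbhk$. The substitution $k=hg$ as you wrote it does not produce your (correct) final formula.
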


Combining this with Theorem~\ref{thm:main4}, 
we obtain:

\begin{thm}\label{thm:main6}
Let $G$ be a finite group 
of order $|G|$.
Let $B$ be a semisimple ring with local units 
satisfying $|G|B = B$. 
Then the group ring $A = B[G]$ is semisimple.
\end{thm}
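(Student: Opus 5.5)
The plan is to combine Theorem~\ref{thm:main5} with Theorem~\ref{thm:main4}. Theorem~\ref{thm:main5} already gives that $A/B$ is separable, with $f : B \to A = B[G]$ the canonical inclusion $b \mapsto b\,1_G$. So it remains to check the s-unitality hypotheses of Theorem~\ref{thm:main4}:
\begin{enumerate}
\item $B$ is left s-unital;
\item $A$ is left s-unital;
\item $f$ is a left s-unital ring homomorphism.
\end{enumerate}
Once these hold, Theorem~\ref{thm:main4} applies directly and shows that $A$ is left semisimple. Here ``semisimple'' for $B$ is read as left semisimple, in the sense used in Theorem~\ref{thm:main4}.

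For (1), take $b \in B$. The local unit property gives an idempotent $e$ with $eb = b$, so $b \in Bb$.

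For (2) and (3), let $x = \sum_{g\in G} b_g g \in A$. Its set of coefficients $\{b_g\}_{g \in G}$ is finite, so there is an idempotent $e \in B$ with $eb_g = b_g = b_g e$ for all $g$. Then
\[
f(e)x = (e\,1_G)\Big(\sum_{g} b_g g\Big) = \sum_g e b_g\, g = x .
\]
This shows $x \in f(B)x \subseteq Ax$, so $A$ is left s-unital and $f$ is left s-unital. In fact $A$ has local units, since the same element $e\,1_G$ is a two-sided local unit for any finite subset of $A$.

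If the statement is intended two-sidedly, the symmetric right versions of Theorems~\ref{thm:main4} and~\ref{thm:main5} apply by the same argument, because local units are two-sided. Right semisimplicity of $A$ then follows as well.

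The only point that could cause trouble is matching the hypotheses exactly. One has to make sure that the separability supplied by Theorem~\ref{thm:main5} is with respect to this same $f$. One also has to confirm that the assumption $|G|B = B$ is needed only for Theorem~\ref{thm:main5}, and not separately in Theorem~\ref{thm:main4}. Beyond these checks there is no genuine obstacle.
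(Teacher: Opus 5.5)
Your proposal is correct and is essentially the paper's proof: both combine Theorem~\ref{thm:main5} (separability of $A/B$) with Theorem~\ref{thm:main4}, using the local units of $B$ to get the needed s-unitality. The paper states explicitly only that the inclusion $B \to A$ is left s-unital, so your checks that $B$ and $A$ are themselves left s-unital supply hypotheses the paper leaves implicit.
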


Here is an outline of the article.

In Section~\ref{sec:separablefunctors}, 
we fix our categorical conventions and 
recall the definition of a separable 
functor. We then develop a systematic account of the categorical properties reflected by separable functors. In particular, we show that separability reflects key classes of morphisms, such as monomorphisms, epimorphisms, sections, retractions, and isomorphisms, as well as structural properties of objects, including subobject and quotient simplicity and semisimplicity, and the existence of initial, terminal, and zero objects. 

In Section~\ref{sec:modulecategories}, we introduce our conventions on nonunital rings and module categories, with particular emphasis on unitary, s-unital, and firm modules. We clarify the relationships between these notions and show that they coincide over left s-unital rings. We then study separability of functors between module categories associated to a ring homomorphism $f : B \to A$, focusing on the restriction functor $\Res_f$ and the induction functor $\Ind_f = A \otimes_B -$ on categories of firm modules. 

In Section~\ref{sec:semisimplemodules}, we apply the preceding functorial results to the study of simplicity and semisimplicity in the category ${}_A\FMod$ of firm modules over a left s-unital ring $A$. We show that the usual module-theoretic notions of simple and semisimple modules coincide with the categorical notions of subobject and quotient simplicity and semisimplicity. These results are then used to prove the nonunital version of Maschke’s theorem for group rings over locally unital rings mentioned above, as well as a corresponding hereditary result, thereby extending classical semisimplicity results to a broad nonunital setting.

\section{Separable functors}\label{sec:separablefunctors}

Throughout, we use the following notation. For a set $X$, we let $|X|$ denote its cardinality, and we let $\N$ denote the set of positive integers.
We let $\C$ denote a category. By this we mean that $\C$ consists of a class of objects $\C_0$, a class of morphisms $\C_1$, domain and codomain maps 
$d,c : \C_1 \to \C_0$, and a partially defined associative composition
$\C_1 \times \C_1 \ni (g,f) \mapsto gf \in \C_1$,
defined whenever $d(g)=c(f)$. For each $M \in \C_0$, there is an identity morphism $\id_M \in \C_1$ satisfying $d(\id_M)=c(\id_M)=M$, 
$f\id_M=f$ whenever $d(f)=M$, and 
$\id_N f=f$ whenever $c(f)=N$.
If $f$ is a morphism in $\C$ with $d(f)=M$ and $c(f)=N$, 
then we write $f:M\to N$, and we denote the class of all such morphisms by $\Hom_{\C}(M,N)$.

\begin{defi}\label{def:separablefunctor}
Let $\F : \C \to \D$ be a functor, 
always assumed to be covariant.
Following \cite{nastasescu1989}, 
we say that 
$\F$ is separable
if for all $M,N \in \C_0$, there is a 
function
\begin{itemize}[widest=(SF2)]

\item[(SF0)] 
$\R_{M,N} : \Hom_{\D}(\F(M),\F(N)) \to 
\Hom_{\C}(M,N)$ such that

\item[(SF1)] $\R_{M,N} ( \F(f) ) = f$, for
$f : M \to N$ in $\C$, and

\item[(SF2)] $\F(g) f' = g' \F(f) \Rightarrow
g \R_{M,N}(f') = \R_{M',N'}(g') f$,
for $f : M \to M'$, $g : N \to N'$ in $\C$, and
$f' : \F(M) \to \F(N)$, $g' : \F(M') \to \F(N')$ in $\D$.
\end{itemize}
At times, we indicate the dependence on $\F$ by 
writing $\R_{M,N}^{\F}$ instead of $\R_{M,N}$.
\end{defi} 

\begin{prop}\label{prop:RFid}
Suppose that $\F : \C \to \D$ is a separable
functor. Let $M,N \in \C_0$.

\begin{itemize}[widest=(a)]

\item[{\rm (a)}] $\R_{M,M}(\id_{\F(M)}) = \id_{M}$;

\item[{\rm (b)}] $\Hom_{\C}(M,N) \neq \emptyset$
$\Longleftrightarrow$
$\Hom_{\D}(\F(M),\F(N)) \neq 
\emptyset$;

\item[{\rm (c)}] $\F$ is faithful.

\end{itemize}
\end{prop}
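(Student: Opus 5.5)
All three parts follow directly from (SF0) and (SF1); (SF2) is not needed.

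For (a), apply (SF1) to $f = \id_M$. Since $\F$ is a functor, $\F(\id_M) = \id_{\F(M)}$, so
\[
\R_{M,M}(\id_{\F(M)}) = \R_{M,M}(\F(\id_M)) = \id_M .
\]

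For (b), we prove the two implications separately.

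\begin{itemize}
\item[($\Rightarrow$)] If $f \in \Hom_{\C}(M,N)$, then $\F(f) \in \Hom_{\D}(\F(M),\F(N))$, so the latter class is nonempty.
\item[($\Leftarrow$)] If some $f' \in \Hom_{\D}(\F(M),\F(N))$ exists, then by (SF0) the function $\R_{M,N}$ is defined on it, and $\R_{M,N}(f') \in \Hom_{\C}(M,N)$.
\end{itemize}

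The only subtlety is that (SF0) provides $\R_{M,N}$ as a function on the whole class $\Hom_{\D}(\F(M),\F(N))$, not just on the image of $\F$. This is exactly what the second implication uses.

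For (c), let $f,g : M \to N$ satisfy $\F(f) = \F(g)$. Applying $\R_{M,N}$ and using (SF1) twice gives
\[
f = \R_{M,N}(\F(f)) = \R_{M,N}(\F(g)) = g .
\]
Hence the map $\Hom_{\C}(M,N) \to \Hom_{\D}(\F(M),\F(N))$ induced by $\F$ is injective, i.e.\ $\F$ is faithful.

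None of these steps is a real obstacle. The only point to state carefully is the use of (SF0) in (b), which I expect to be the step most worth spelling out.
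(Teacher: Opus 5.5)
Your proof is correct and follows essentially the same route as the paper. The paper cites the original source on separable functors for (a) and calls (b) and (c) immediate; your arguments are exactly the routine verifications behind this: (SF1) applied to $\id_M$ with $\F(\id_M)=\id_{\F(M)}$ for (a), (SF0) for the reverse implication in (b), and injectivity of $\F$ on hom-classes via (SF1) for (c).
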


\begin{proof}
For (a) see \cite[p. 399]{nastasescu1989}; (b) and (c)
are immediate.
\end{proof}

The next result shows that the assignment $\R$ 
behaves almost like a functor,
in the sense that the naturality condition
(SF2) can be replaced by (SF3) below.

\begin{prop}\label{prop:newsep}
Let $\F : \C \to \D$ be a functor. 
Then $\F$ is 
separable if and only if for all 
$M,N \in \C_0$, there is a map $\R_{M,N}$ 
satisfying (SF0), (SF1), and 
\begin{itemize}[widest=(SF2)]

\item[{\rm (SF3)}] 
$\R_{M,P}(gf) = 
\R_{N,P}(g) \R_{M,N}(f)$ for $M,N,P \in \C_0$,
$f : \F(M)  \to  \F(N)$, 
\linebreak
$g :  \F(N)  \to  \F(P)$, 
whenever
$f \in \F( \Hom_{\C}(M,N) )$ or 
$g \in \F( \Hom_{\C}(N,P))$.
\end{itemize}
\end{prop}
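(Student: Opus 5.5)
We prove the two directions separately, using the same family $\R_{M,N}$ throughout; only the naturality condition changes.

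\emph{(SF2) implies (SF3).} Let $f : \F(M) \to \F(N)$ and $g : \F(N) \to \F(P)$.

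First suppose $f = \F(h)$ for some $h : M \to N$ in $\C$. Apply (SF2) with the morphisms $h : M \to N$ and $\id_P : P \to P$ in $\C$, and with $f' = gf = g\F(h) : \F(M) \to \F(P)$ and $g' = g : \F(N) \to \F(P)$ in $\D$. The hypothesis of (SF2) reads $\F(\id_P)\, g\F(h) = g\,\F(h)$, which holds. Hence
\[
\R_{M,P}(gf) = \R_{N,P}(g)\, h = \R_{N,P}(g)\,\R_{M,N}(f),
\]
where the last equality is (SF1).

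Next suppose $g = \F(k)$ for some $k : N \to P$ in $\C$. Apply (SF2) with $\id_M : M \to M$ and $k : N \to P$ in $\C$, and with $f' = f$ and $g' = \F(k) f$. The hypothesis reads $\F(k) f = \F(k) f\,\F(\id_M)$, which holds. Hence
\[
k\,\R_{M,N}(f) = \R_{M,P}(\F(k) f) = \R_{M,P}(gf).
\]
By (SF1), $k = \R_{N,P}(g)$, which gives (SF3).

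\emph{(SF3) implies (SF2).} Let $f : M \to M'$ and $g : N \to N'$ be in $\C$, and let $f' : \F(M) \to \F(N)$ and $g' : \F(M') \to \F(N')$ be in $\D$ with $\F(g) f' = g' \F(f)$. Apply (SF1) and (SF3) to the composite $\F(g) f'$, whose left factor lies in the image of $\F$:
\[
\R_{M,N'}(\F(g) f') = \R_{N,N'}(\F(g))\,\R_{M,N}(f') = g\,\R_{M,N}(f').
\]
Apply them likewise to $g'\F(f)$, whose right factor lies in the image of $\F$:
\[
\R_{M,N'}(g'\F(f)) = \R_{M',N'}(g')\,f.
\]
The left-hand sides agree because $\F(g) f' = g'\F(f)$. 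Therefore $g\,\R_{M,N}(f') = \R_{M',N'}(g')\,f$, which is (SF2).

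There is no genuine obstacle here. The only care needed is to match the domains and codomains of the auxiliary identity morphisms correctly when specializing (SF2), and to note that (SF3) is applied only in the cases it allows, namely when one of the two factors comes from $\C$.
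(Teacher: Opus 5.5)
Your proof is correct and complete. Both directions use the same family $\R_{M,N}$: you specialize (SF2) at identity morphisms to get (SF3), and for the converse you evaluate $\R_{M,N'}$ on both sides of $\F(g)f' = g'\F(f)$ using (SF3) and (SF1). The paper gives no argument of its own and only cites Lemma~1.1 of Rafael, so your direct verification is the routine argument behind that citation and makes the result self-contained.
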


\begin{proof}
This is essentially \cite[Lemma~1.1]{rafael1990}.
\end{proof}

\begin{prop}\label{prop:nastasescu}
Let $\F : \C \to \D$ and $\G : \D \to \E$
be functors. Put $\HH := \G \F$.

\begin{itemize}[widest=(a)]

\item[{\rm (a)}] 
$\F$ and $\G$ separable
$\Longrightarrow$ $\HH$ separable; 

\item[{\rm (b)}]
$\HH$ separable $\Longrightarrow$ 
$\F$ separable.

\end{itemize}
\end{prop}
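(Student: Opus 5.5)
I will use the reformulation of separability in Proposition~\ref{prop:newsep}, since (SF3) composes more transparently than the naturality condition (SF2). Throughout, for objects $M,N$ of $\C$ I write $\R^{\F}_{M,N}$ for the maps witnessing separability of $\F$. For objects $X,Y$ of $\D$ I write $\R^{\G}_{X,Y}$, and for $\HH$ I write $\R^{\HH}_{M,N}$.

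For (a), I define
\[
\R^{\HH}_{M,N} := \R^{\F}_{M,N} \circ \R^{\G}_{\F(M),\F(N)} : \Hom_{\E}(\HH(M),\HH(N)) \to \Hom_{\C}(M,N).
\]
(SF1) holds because $\R^{\G}(\G(\F(f))) = \F(f)$ and then $\R^{\F}(\F(f)) = f$.

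For (SF2), suppose $\HH(g) f' = g' \HH(f)$ with $f:M\to M'$ and $g:N\to N'$. Rewrite this as $\G(\F(g)) f' = g' \G(\F(f))$. (SF2) for $\G$, applied to the morphisms $\F(f)$ and $\F(g)$ of $\D$, gives
\[
\F(g)\, \R^{\G}(f') = \R^{\G}(g')\, \F(f).
\]
This is exactly the hypothesis of (SF2) for $\F$ with $f'$ replaced by $\R^{\G}(f')$ and $g'$ replaced by $\R^{\G}(g')$. Hence $g\, \R^{\HH}(f') = \R^{\HH}(g')\, f$.

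So (a) goes through directly with (SF2), and Proposition~\ref{prop:newsep} is not needed here. The only point requiring care is checking that the morphisms fed to each (SF2) come from the right categories.

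For (b), I define
\[
\R^{\F}_{M,N}(u) := \R^{\HH}_{M,N}(\G(u)) \quad \text{for } u : \F(M) \to \F(N) \text{ in } \D.
\]
(SF1) holds because $\R^{\HH}(\G(\F(f))) = \R^{\HH}(\HH(f)) = f$.

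For (SF2), suppose $\F(g) f' = g' \F(f)$ in $\D$. Applying $\G$ gives $\HH(g)\G(f') = \G(g')\HH(f)$. (SF2) for $\HH$ then yields $g\, \R^{\HH}(\G(f')) = \R^{\HH}(\G(g'))\, f$, which is the required identity.

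Neither part has a genuine obstacle. The main thing to watch is bookkeeping: in (a), the intermediate morphisms $\F(f)$ and $\F(g)$ must be used as the morphisms $f,g$ of $\D$ in $\G$'s (SF2). I expect both verifications to be short diagram chases.
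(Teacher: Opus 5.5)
Your proof is correct and matches the paper's approach: the paper gives no argument beyond citing \cite[Lemma~1.1]{nastasescu1989}, and the standard proof there also takes $\R^{\HH}_{M,N}=\R^{\F}_{M,N}\circ\R^{\G}_{\F(M),\F(N)}$ for (a) and $\R^{\F}_{M,N}(u)=\R^{\HH}_{M,N}(\G(u))$ for (b), checked much as you do. Your opening plan to use Proposition~\ref{prop:newsep} is superfluous, since, as you note yourself, both parts go through directly with (SF1) and (SF2).
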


\begin{proof}
This is \cite[Lemma 1.1]{nastasescu1989}.
\end{proof}

The following result is probably well known. 
Since we were unable to find an appropriate reference, 
we include a proof.

\begin{lem}\label{lem:limitsreflect}
Every separable functor reflects limits and colimits.
\end{lem}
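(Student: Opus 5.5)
We prove the statement for limits; the case of colimits follows by the dual argument, or by applying the limit case to $\F^{op} : \C^{op} \to \D^{op}$. Note that $\F^{op}$ is again separable, with the same maps $\R_{N,M}$, since (SF0)--(SF2) are self-dual.

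Let $\J$ be a small category and $D : \J \to \C$ a diagram. Let $(L, (p_j : L \to D(j))_{j \in \J_0})$ be a cone over $D$ such that $(\F(L), (\F(p_j))_j)$ is a limit cone of $\F D$. We must show that the cone $(L,(p_j))$ is a limit of $D$.

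\emph{Existence of factorizations.} Let $(M,(q_j : M \to D(j)))$ be a cone over $D$. Then $(\F(M),(\F(q_j)))$ is a cone over $\F D$, so by the universal property there is a unique $h' : \F(M) \to \F(L)$ with $\F(p_j) h' = \F(q_j)$ for all $j$. Put $h := \R_{M,L}(h')$. By Proposition~\ref{prop:newsep}, condition (SF3) applies because $\F(p_j)$ lies in the image of $\F$. Together with (SF1), this gives
\[
p_j h = \R_{L,D(j)}(\F(p_j))\, \R_{M,L}(h') = \R_{M,D(j)}(\F(p_j) h') = \R_{M,D(j)}(\F(q_j)) = q_j .
\]
Alternatively, one can apply (SF2) directly with $f = \id_M$, $g = p_j$, $f' = h'$ and $g' = \F(q_j)$, together with Proposition~\ref{prop:RFid}(a).

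\emph{Uniqueness.} Suppose $h_1, h_2 : M \to L$ both satisfy $p_j h_i = q_j$ for all $j$. Then $\F(p_j)\F(h_i) = \F(q_j)$ for all $j$, so $\F(h_1) = \F(h_2)$ by the uniqueness in the limit of $\F D$. Since $\F$ is faithful by Proposition~\ref{prop:RFid}(c), we get $h_1 = h_2$.

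Hence $(L,(p_j))$ is a limit cone, and $\F$ reflects limits. The only subtle step is the existence step, namely making sure the lift $h$ commutes with the projections. This is exactly what (SF3) or (SF2) provides, because the $\F(p_j)$ are images of morphisms of $\C$. The remaining steps are formal.
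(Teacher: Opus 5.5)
Your proof is correct and follows essentially the same route as the paper: you take the mediating morphism to be $\R_{M,L}(h')$, obtain commutativity with the cone from (SF3) and (SF1), obtain uniqueness from faithfulness of $\F$, and treat colimits by duality. (In your alternative (SF2) route, Proposition~\ref{prop:RFid}(a) is not needed, since $\R_{M,D(j)}(\F(q_j))\,\id_M = q_j$ already follows from (SF1).)
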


\begin{proof}
Let \( \F : \C \to \D \) be a 
separable functor.
Let \( \G : \J \to \C \) be a diagram, and let 
\( (\lambda_j : L \to C_j)_{j \in \J} \) 
be a cone in $\C$ such that 
\( ( \F(\lambda_j) : \F(L) \to \F(C_j))_{j \in \J} \) 
is a limit cone of \( \F \circ \G \) in \(\mathcal \D \). 
Given any cone \( (\alpha_j : X \to C_j)_{j \in \J}\) 
in \( \C \), there exists a unique morphism
$h : \F(X) \to \F(L)$ with
$\F( \lambda_j ) h = \F(\alpha_j)$
for all $j \in \J$. 
Put \(g := \R_{X,L}(h)\). Then, for each \(j \in \J\), 
(SF3) and (SF1) imply that
$\lambda_j  g
= \lambda_j \R_{X,L}(h)
= \R_{X,C_j}(\F(\lambda_j)  h)
= \R_{X,C_j}(\F(\alpha_j))
= \alpha_j$.
Thus \(g\) is a mediating morphism.
If \(g' : X \to L\) is another such morphism, then
$\F(\lambda_j)  \F(g')
= \F(\alpha_j)
= \F(\lambda_j)  \F(g)$
for all \(j \in \J\), so by the uniqueness 
of \(h\) we get \(\F(g') = \F(g)\).
By Proposition~\ref{prop:RFid}\textup{(c)}, \(g' = g\).
Thus \( (\lambda_j : L \to C_j)_{j \in \J} \)
is a limit cone in \( \C \). Hence, \( \F \) reflects limits.
The proof for colimits is dual. 
\end{proof}

Let $f : M \to N$ be a morphism in $\C$.
Then $f$ is called 
a monomorphism if for all morphisms 
$g_1, g_2 : P \to M$, the equality 
$f g_1 = f g_2$ implies $g_1 = g_2$. 
Dually, $f$ is called an 
epimorphism if for all morphisms 
$h_1, h_2 : N \to Q$, the equality 
$h_1 f = h_2 f$ implies $h_1 = h_2$.

\begin{prop}
Suppose that $\F : \C \to \D$ is a separable
functor. Let $f \in \C_1$. 
\begin{itemize}[widest=(a)]

\item[{\rm (a)}] $\F(f)$ monomorphism
$\Longrightarrow$ $f$ monomorphism;

\item[{\rm (b)}] $\F(f)$ epimorphism
$\Longrightarrow$ $f$ epimorphism.

\end{itemize}
\end{prop}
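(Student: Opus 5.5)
The plan is to use faithfulness directly, since that is all the statement really needs. By Proposition~\ref{prop:RFid}\textup{(c)}, a separable functor $\F$ is faithful, and any faithful functor reflects monomorphisms and epimorphisms. No use of $\R$ beyond what went into Proposition~\ref{prop:RFid} should be required.

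For (a), let $f : M \to N$ in $\C$ be such that $\F(f)$ is a monomorphism in $\D$. Take morphisms $g_1, g_2 : P \to M$ with $f g_1 = f g_2$. Applying $\F$ and using functoriality gives $\F(f)\F(g_1) = \F(f)\F(g_2)$. Since $\F(f)$ is a monomorphism, $\F(g_1) = \F(g_2)$. Faithfulness of $\F$ (Proposition~\ref{prop:RFid}\textup{(c)}) then yields $g_1 = g_2$, so $f$ is a monomorphism.

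Part (b) is the dual argument. Suppose $h_1, h_2 : N \to Q$ satisfy $h_1 f = h_2 f$. Applying $\F$ gives $\F(h_1)\F(f) = \F(h_2)\F(f)$. Since $\F(f)$ is an epimorphism, $\F(h_1) = \F(h_2)$, and faithfulness gives $h_1 = h_2$.

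There is no real obstacle here. If one prefers to work with $\R$ explicitly instead of citing faithfulness, write $g_i = \R_{P,M}(\F(g_i))$ by (SF1) and argue as above. The only point to check is that Proposition~\ref{prop:RFid}\textup{(c)} is already available; it follows at once from (SF1), since $\F(g_1)=\F(g_2)$ implies $g_1 = \R(\F(g_1)) = \R(\F(g_2)) = g_2$.
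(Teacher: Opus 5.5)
Your proof is correct, but it takes a different route from the paper.

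The paper deduces the statement from Lemma~\ref{lem:limitsreflect}, which says that separable functors reflect limits and colimits. It uses the fact that $f$ is a monomorphism if and only if the square with both legs $\id$ is a pullback of $(f,f)$, and dually for epimorphisms via pushouts. You instead use only faithfulness, i.e.\ Proposition~\ref{prop:RFid}\textup{(c)}, which follows from (SF1) alone.

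Your argument is more elementary and proves a stronger fact: every faithful functor reflects monomorphisms and epimorphisms, with no appeal to (SF2) or (SF3). Nothing is lost relative to the paper's route. In this special case, the existence half of the limit-reflection lemma (the part that uses $\R$ and (SF3)) reduces exactly to your remark $g_1=\R(\F(g_1))=\R(\F(g_2))=g_2$.

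What the paper's route buys is uniformity. The same lemma also yields the reflection of initial, terminal and zero objects. There faithfulness alone is not enough, because one also needs the existence of morphisms supplied by $\R$, as in Proposition~\ref{prop:RFid}\textup{(b)}.
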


\begin{proof}
This follows from Lemma~\ref{lem:limitsreflect}.
Indeed, a morphism is a monomorphism if and only if 
$(\id,\id)$ is the pullback of the corresponding pair $(f,f)$.
Thus (a) holds. The statement in (b) is dual.
For more details, see e.g. 
\cite[p. 72, Exercise 4]{maclane1998}).
\end{proof}

Let $f : M \to N$ and $g : N \to M$ be morphisms 
in $\C$ such that
$fg = \id_N$.  Then $f$ is called 
a retraction of $g$, and
$g$ is called a section of $f$. 
If also $gf = \id_M$, 
then $f$ is said to be an isomorphism 
with inverse $g$. 

\begin{prop}\label{prop:retractionsection}
Suppose that $\F : \C \to \D$ is a separable
functor and let $f : M \to N$ be a morphism in $\C$.
\begin{itemize}[widest=(a)]

\item[{\rm (a)}] $r : \F(N) \to \F(M)$
a retraction of $\F(f)$ $\Longrightarrow$
$\R_{N,M}(r)$ a retraction of $f$;

\item[{\rm (b)}] $s : \F(N) \to \F(M)$  
a section of $\F(f)$ $\Longrightarrow$ 
$\R_{N,M}(s)$ a section of $f$;

\item[{\rm (c)}] $g : \F(N) \to \F(M)$ 
an inverse of $\F(f)$
$\Longrightarrow$ $\R_{N,M}(g)$ an inverse of $f$.

\end{itemize}
\end{prop}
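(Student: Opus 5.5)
The plan is to use the functoriality-type property (SF3) from Proposition~\ref{prop:newsep} together with (SF1) and Proposition~\ref{prop:RFid}(a). No limit arguments are needed. Each part reduces to computing $\R$ of a composite in which one factor lies in the image of $\F$.

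For (a), suppose $r : \F(N) \to \F(M)$ satisfies $r\,\F(f) = \id_{\F(M)}$. Apply $\R_{M,M}$ to both sides. Since $\F(f) \in \F(\Hom_{\C}(M,N))$, (SF3) applies:
\[
\R_{M,M}(r\,\F(f)) = \R_{N,M}(r)\,\R_{M,N}(\F(f)) = \R_{N,M}(r)\, f ,
\]
where the last step uses (SF1). The left-hand side is $\R_{M,M}(\id_{\F(M)}) = \id_M$ by Proposition~\ref{prop:RFid}(a). Hence $\R_{N,M}(r) f = \id_M$, so $\R_{N,M}(r)$ is a retraction of $f$.

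For (b), suppose $\F(f)\,s = \id_{\F(N)}$. Apply $\R_{N,N}$ and use (SF3), now with the second factor $g = \F(f)$ in the image of $\F$. This gives $\id_N = \R_{M,N}(\F(f))\,\R_{N,M}(s) = f\,\R_{N,M}(s)$. Thus $\R_{N,M}(s)$ is a section of $f$.

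For (c), an inverse $g$ of $\F(f)$ is simultaneously a retraction and a section of $\F(f)$. By (a) and (b), the same morphism $\R_{N,M}(g)$ is both a left and a right inverse of $f$, so it is an inverse of $f$.

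The only point that needs care is checking that (SF3) is legitimately applicable in each composite. This holds because one factor is always $\F(f)$. One must also confirm that the indices of $\R$ match the domains and codomains, for example $\R_{M,M}$ in (a) and $\R_{N,N}$ in (b). Beyond that I expect no real obstacle; the argument is a direct computation.
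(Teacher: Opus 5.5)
Your proof is correct, and it is the standard naturality argument behind the result the paper only cites (\cite[Prop.~1.2.1]{nastasescu1989}): you apply $\R$ to $r\,\F(f)=\id_{\F(M)}$, respectively $\F(f)\,s=\id_{\F(N)}$, and use naturality in one variable together with (SF1). The one implicit point is that (SF3) holds for the very family $\R$ from (SF2), not merely for some family; this follows by setting one of the $\C$-morphisms in (SF2) equal to an identity, and the paper uses (SF3) in the same way elsewhere.
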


\begin{proof}
This is \cite[Prop. 1.2.1]{nastasescu1989}.
\end{proof}

Let $M \in \C_0$. 
Consider the class of all monomorphisms
in $\C$ with codomain $M$.  
We define an equivalence relation $\sim_M$ on this 
class as follows.
Given monomorphisms $f : P \to M$ and $g : Q \to M$, 
we write $f \sim_M g$ if there exists an isomorphism $h : P \to Q$ with $f = g h$.
The equivalence class $[f]_{\sim_M}$ 
of such a 
monomorphism $f : P \to M$ is called a 
subobject of $M$.  
We denote the collection of subobjects of $M$ by 
$\Sub_{\C}(M)$.

\begin{lem}\label{lem:simequiv}
Suppose that $f : P \to M$ and $g : Q \to M$ are 
monomorphisms in $\C$. Then $[f]_{\sim_M} = 
[g]_{\sim_M}$ if and only if there are 
morphisms $p : P \to Q$ and $q : Q \to P$
such that $f = gp$ and $g = f q$.
\end{lem}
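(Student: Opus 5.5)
The argument is a direct one in both directions, using only the definitions of $\sim_M$ and of a monomorphism. The one real step is in the ``if'' direction: showing that the given morphisms $p$ and $q$ are mutually inverse, which is exactly where the monomorphism hypotheses enter.

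For the ``only if'' direction, suppose $[f]_{\sim_M} = [g]_{\sim_M}$, so $f \sim_M g$. By definition there is an isomorphism $h : P \to Q$ with $f = gh$. Put $p := h$ and $q := h^{-1}$. Then $f = gp$ by construction. Composing $f = gh$ on the right with $h^{-1}$ gives $f h^{-1} = g h h^{-1} = g$, that is, $g = fq$.

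For the ``if'' direction, suppose $p : P \to Q$ and $q : Q \to P$ satisfy $f = gp$ and $g = fq$. It suffices to show that $p$ is an isomorphism with inverse $q$, because then $f = gp$ exhibits $f \sim_M g$ via $h := p$.
\begin{itemize}
\item Substituting the second equation into the first gives $f = gp = fqp$, so $f \circ (qp) = f \circ \id_P$. Since $f$ is a monomorphism, $qp = \id_P$.
\item Symmetrically, $g = fq = gpq$, so $g \circ (pq) = g \circ \id_Q$. Since $g$ is a monomorphism, $pq = \id_Q$.
\end{itemize}
Hence $p$ is an isomorphism with inverse $q$, and $[f]_{\sim_M} = [g]_{\sim_M}$.
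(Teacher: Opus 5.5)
Your proof is correct. The paper gives no argument of its own here and only cites Mac Lane (p.~122), whose standard argument is yours: take $p=h$, $q=h^{-1}$ for the forward direction, and for the converse cancel the monomorphisms $f$ and $g$ to get $qp=\id_P$ and $pq=\id_Q$.
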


\begin{proof}
See the discussion in \cite[p.~122]{maclane1998}.
\end{proof}

Let $\F : \C \to \D$ be a functor
that preserves monomorphisms.
Take $M \in \C_0$.
Define $\alpha_{M} : 
\Sub_{\C}(M) \to \Sub_{\D}(\F(M))$
by $\alpha_{M}( [ m ]_{\sim_M} ) = 
[\F(m)]_{\sim_{\F(M)}}$, for 
monomorphisms $m : P \to M$.

\begin{prop}\label{prop:injective}
Let $\F : \C \to \D$ be a functor
that preserves monomorphisms. Let $M \in \C_0$. 
Then $\alpha_{M}$ is well-defined.
If $\F$ is also separable, then
$\alpha_{M}$ is injective.
\end{prop}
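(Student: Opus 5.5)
The proof splits into two parts, and both go through Lemma~\ref{lem:simequiv}, which lets us work with explicit factorizations rather than with the isomorphism in the definition of $\sim_M$.

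\emph{Well-definedness.} Suppose $m : P \to M$ and $m' : P' \to M$ are monomorphisms with $[m]_{\sim_M} = [m']_{\sim_M}$. Since $\F$ preserves monomorphisms, $\F(m)$ and $\F(m')$ are monomorphisms with codomain $\F(M)$, so their classes in $\Sub_{\D}(\F(M))$ make sense. By Lemma~\ref{lem:simequiv}, there are morphisms $p : P \to P'$ and $q : P' \to P$ with $m = m'p$ and $m' = mq$. Applying $\F$ gives $\F(m) = \F(m')\F(p)$ and $\F(m') = \F(m)\F(q)$. The converse direction of Lemma~\ref{lem:simequiv}, now in $\D$, then gives $[\F(m)]_{\sim_{\F(M)}} = [\F(m')]_{\sim_{\F(M)}}$. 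Thus $\alpha_M$ does not depend on the chosen representative.

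\emph{Injectivity when $\F$ is separable.} Suppose $[\F(m)]_{\sim_{\F(M)}} = [\F(m')]_{\sim_{\F(M)}}$.
\begin{enumerate}
\item By Lemma~\ref{lem:simequiv} in $\D$, there are morphisms $p' : \F(P) \to \F(P')$ and $q' : \F(P') \to \F(P)$ with $\F(m) = \F(m')p'$ and $\F(m') = \F(m)q'$.
\item Pull these back to $\C$ by setting $p := \R_{P,P'}(p')$ and $q := \R_{P',P}(q')$.
\item Apply $\R_{P,M}$ to the first equation. By (SF1) the left side is $\R_{P,M}(\F(m)) = m$. On the right, (SF3) from Proposition~\ref{prop:newsep} applies because the left factor $\F(m')$ lies in $\F(\Hom_{\C}(P',M))$. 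It gives
\[
\R_{P,M}(\F(m')p') = \R_{P',M}(\F(m'))\,\R_{P,P'}(p') = m'p .
\]
Hence $m = m'p$.
\item The symmetric computation gives $m' = mq$.
\item Lemma~\ref{lem:simequiv} in $\C$ now yields $[m]_{\sim_M} = [m']_{\sim_M}$.
\end{enumerate}

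The only point that needs care is checking that (SF3) really applies in step~3, that is, that one of the two factors lies in the image of $\F$. It does, because $\F(m')$ is the image of a morphism in $\C$. With that in place the argument is routine. Working through Lemma~\ref{lem:simequiv} is what keeps it routine: it avoids having to show directly that $\R$ carries isomorphisms to isomorphisms, although Proposition~\ref{prop:retractionsection}(c) would also handle that.
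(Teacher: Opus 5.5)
Your proof is correct and follows the paper's argument. For injectivity you do exactly what the paper does: you pull the factorizations $\F(m)=\F(m')p'$ and $\F(m')=\F(m)q'$ from Lemma~\ref{lem:simequiv} back to $\C$ via (SF1) and (SF3), using that the left factor lies in the image of $\F$. The only cosmetic difference is in the well-definedness step: the paper applies $\F$ directly to the isomorphism $h$ with $f=gh$, whereas you route through Lemma~\ref{lem:simequiv} in both categories.
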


\begin{proof}
Let $f : P \to M$ and $g : Q \to M$ be monomorphisms 
in $\C$ such that $f = gh$ for some isomorphism 
$h : P \to Q$. Then $\F(f)=\F(g)\F(h)$.
%Since $\F$ preserves monomorphisms, 
By the assumptions,
$\F(f)$ and $\F(g)$ are monomorphisms. 
%and since $h$ is an isomorphism, $\F(h)$ is an isomorphism. 
Hence $[\F(f)]_{\sim_{\F(M)}} = [\F(g)]_{\sim_{\F(M)}}$.
Thus, $\alpha_M$ is well-defined.

Suppose now that $\F$ is separable and 
$[\F(f)]_{\sim_{\F(M)}} = [\F(g)]_{\sim_{\F(M)}}$.
By Lemma~\ref{lem:simequiv}, there exist morphisms
$p : \F(P) \to \F(Q)$ and $q : \F(Q) \to \F(P)$ with
$\F(f) = \F(g)p$ and $\F(g) = \F(f)q$.
By (SF1) and (SF3),
$f =\R_{P,M}(\F(f))
= \R_{P,M}(\F(g)p)
= \R_{Q,M}(\F(g)) \R_{P,Q}(p)
= g \R_{P,Q}(p)$,
and similarly,
$g = \R_{Q,M}(\F(g))
= \R_{Q,M}(\F(f)q)
= \R_{P,M}(\F(f)) \R_{Q,P}(q)
= f \R_{Q,P}(q)$.
Hence, by Lemma~\ref{lem:simequiv},
$[f]_{\sim_M} = [g]_{\sim_M}$.
%Therefore, $\alpha_M$ is injective.
\end{proof}

We introduce the 
following terminology.
We say that $M \in \C_0$ is
subobject trivial if 
$|\Sub_{\C}(M)| = 1$,
subobject nontrivial if 
$|\Sub_{\C}(M)| \geq 2$, and
subobject simple if 
$|\Sub_{\C}(M)|=2$.

\begin{prop}\label{prop:subobjectsimple}
Suppose that $\F : \C \to \D$ is a separable
functor which preserves monomorphisms. 
Let $M \in \C_0$ be subobject nontrivial
and $\F(M)$ subobject simple in $\D$. Then
$M$ is subobject simple in $\C$.
\end{prop}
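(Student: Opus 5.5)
The plan is to use the injectivity of $\alpha_M$ from Proposition~\ref{prop:injective}. Since $\F$ is separable and preserves monomorphisms, $\alpha_M : \Sub_{\C}(M) \to \Sub_{\D}(\F(M))$ is a well-defined injective map. Injectivity immediately gives
\[
|\Sub_{\C}(M)| \le |\Sub_{\D}(\F(M))| = 2,
\]
because $\F(M)$ is subobject simple. On the other hand, $M$ is subobject nontrivial by hypothesis, so $|\Sub_{\C}(M)| \ge 2$. Combining the two inequalities yields $|\Sub_{\C}(M)| = 2$, that is, $M$ is subobject simple in $\C$.

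The only point needing a word of care is that $\Sub_{\C}(M)$ might a priori be a proper class rather than a set. The argument goes through regardless: an injective assignment into a two-element collection forces at most two classes. Concretely, if $[f]$, $[g]$, $[h]$ were three distinct subobjects of $M$, their images under $\alpha_M$ would be three distinct subobjects of $\F(M)$, contradicting $|\Sub_{\D}(\F(M))| = 2$. So cardinality comparisons can be phrased elementwise.

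I do not expect any genuine obstacle. All the work, namely transporting the relation $\sim$ back along $\F$ via (SF1) and (SF3), is already contained in Proposition~\ref{prop:injective}. The present statement is a direct counting consequence of that result together with the nontriviality hypothesis, which supplies the lower bound that injectivity alone cannot give.
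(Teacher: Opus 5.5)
Your proposal is correct and follows the paper's proof: injectivity of $\alpha_M$ (Proposition~\ref{prop:injective}) gives $|\Sub_{\C}(M)| \le 2$, and subobject nontriviality gives the reverse inequality. Your remark about phrasing the count elementwise, in case $\Sub_{\C}(M)$ is a proper class, is a harmless refinement that the paper does not include.
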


\begin{proof}
Since $\F(M)$ is subobject simple in $\D$,
$|\Sub_{\D}(\F(M))|=2$.
The map $\alpha_M$ is injective,
by Proposition \ref{prop:injective}.
Thus, $|\Sub_{\C}(M)| \leq 2$. 
On the other hand, since
$M$ is subobject nontrivial, 
$|\Sub_{\C}(M)| \geq 2$.
Therefore $|\Sub_{\C}(M)|=2$.
\end{proof}

Let $M \in \C_0$.
We say that $M$ is subobject semisimple
if every monomorphism $m: P \to M$ 
has a retraction $r : M \to P$.

\begin{prop}\label{prop:Fsemisimple}
Suppose that $\F : \C \to \D$ is a separable
functor which preserves monomorphisms. 
Let $M \in \C_0$ have the property that
$\F(M)$ is subobject semisimple in $\D$. Then
$M$ is subobject semisimple in $\C$.
\end{prop}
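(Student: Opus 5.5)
Let $m : P \to M$ be a monomorphism in $\C$; we need a retraction of $m$. Since $\F$ preserves monomorphisms, $\F(m) : \F(P) \to \F(M)$ is a monomorphism in $\D$. By the hypothesis that $\F(M)$ is subobject semisimple, $\F(m)$ has a retraction $r : \F(M) \to \F(P)$, so that $r\,\F(m) = \id_{\F(P)}$.

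We then transport $r$ back to $\C$. Proposition~\ref{prop:retractionsection}(a), applied with $f = m$, $N = M$ and $M = P$, shows that $\R_{M,P}(r) : M \to P$ is a retraction of $m$, i.e.\ $\R_{M,P}(r)\, m = \id_P$. Directly, this is (SF3), which applies because $\F(m)$ lies in $\F(\Hom_{\C}(P,M))$, followed by Proposition~\ref{prop:RFid}(a):
\[
\R_{M,P}(r)\, m = \R_{M,P}(r)\,\R_{P,M}(\F(m)) = \R_{P,P}(r\,\F(m)) = \R_{P,P}(\id_{\F(P)}) = \id_P .
\]

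Since $m$ was arbitrary, $M$ is subobject semisimple. There is no real obstacle here. The only point to watch is that the preservation of monomorphisms is genuinely needed, so that the semisimplicity of $\F(M)$ can be applied to $\F(m)$.
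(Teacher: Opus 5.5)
Your proof is correct and is essentially the paper's argument: you push the monomorphism $m$ forward, obtain a retraction of $\F(m)$ from the subobject semisimplicity of $\F(M)$, and pull it back to a retraction of $m$ via Proposition~\ref{prop:retractionsection}(a). Your explicit computation with (SF1), (SF3) and Proposition~\ref{prop:RFid}(a) just unpacks that proposition.
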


\begin{proof}
Suppose that $f : P \to M$ is a monomorphism in $\C$.
Since $\F$ preserves monomorphisms,
$\F(f) : \F(P) \to \F(M)$ is a monomorphism in $\D$.
Because $\F(M)$ is subobject semisimple, 
$\F(f)$ has a retraction $r : \F(M) \to \F(P)$. 
By 
Proposition \ref{prop:retractionsection}(a),
$\R_{M,P}(r)$ is a retraction of $f$. 
Hence, $M$ is subobject semisimple in $\C$.
\end{proof}

\begin{exa}\label{ex:simplenotsemisimple}
Let $\Set$ denote the category of sets.

(a) In $\Set$, the monomorphisms are
precisely the injective maps.
Using this, it is easy to see that
all sets are subobject semisimple. 
All singleton sets $\{ * \}$ are subobject
simple since, up to equivalence
there are precisely two injections into
$\{ * \}$ namely $\id_{ \{ * \} }$ and
the empty function $\emptyset : 
\emptyset \to \{ * \}$. It is not hard
to see that the singleton sets in fact
are all subobject simple sets.
Therefore, in $\Set$, every
subobject simple object is trivially
subobject semisimple.

(b) In an arbitrary category, a subobject simple object need not be subobject semisimple.
Indeed, consider
the category whose objects are $M$ and $N$, and 
whose only morphisms are $\id_M,\id_N$ and
$f : N \to M$. Subobjects of $M$
correspond to the monomorphisms into $M$,
that is $\id_M$ and $f$. Therefore,
$M$ is subobject simple. But 
$f$ does not have a retraction.
Hence, $M$ is not subobject semisimple.
\end{exa}

Let $M \in \C_0$. 
Consider the class of all epimorphisms
in $\C$ with domain $M$.  
We define an equivalence relation $\approx_M$ on 
this class as follows.
Given epimorphisms $f : M \to P$ and $g : M \to Q$, we write
$f \approx_M g$ if there exists an isomorphism $h : 
Q \to P$ with $f = h g$.
The equivalence class $[f]_{\approx_M}$ of such an 
epimorphism $f : M \to P$ is called a 
quotient object of $M$.  
We denote the collection of quotient objects of $M$ by $\Quot_{\C}(M)$.

\begin{lem}\label{lem:approx}
Suppose that $f : M \to P$ and $g : M \to Q$ are 
epimorphisms in $\C$. Then $[f]_{\approx_M} = 
[g]_{\approx_M}$ if and only if there are 
morphisms $p : Q \to P$ and $q : P \to Q$
such that $f = pg$ and $g = qf$.
\end{lem}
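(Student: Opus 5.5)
The statement is the dual of Lemma~\ref{lem:simequiv}. I would prove it either by passing to the opposite category $\C^{\mathrm{op}}$, or directly by the same elementary argument with arrows reversed. The direct argument is short, so I plan to give it explicitly.

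For the forward direction, suppose $[f]_{\approx_M} = [g]_{\approx_M}$. By definition of $\approx_M$ there is an isomorphism $h : Q \to P$ with $f = hg$. Put $p := h$ and $q := h^{-1} : P \to Q$. Then $f = pg$ holds by assumption. Moreover $qf = h^{-1}hg = g$, so $g = qf$.

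For the converse, suppose there are $p : Q \to P$ and $q : P \to Q$ with $f = pg$ and $g = qf$. Then
\[
f = pg = pqf = (pq)f = \id_P f .
\]
Since $f$ is an epimorphism, right cancellation gives $pq = \id_P$. Symmetrically, $g = qf = qpg$, and since $g$ is an epimorphism, $qp = \id_Q$. Hence $p : Q \to P$ is an isomorphism with inverse $q$, and $f = pg$. By definition this gives $f \approx_M g$, that is, $[f]_{\approx_M} = [g]_{\approx_M}$.

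There is no real obstacle here. The only points needing care are that the cancellation must use the epimorphism property (cancelling on the right), and that the isomorphism $h$ in the definition of $\approx_M$ runs from $Q$ to $P$, so it must be matched with $p$ rather than $q$. Alternatively, one can note that epimorphisms with domain $M$ in $\C$ are exactly monomorphisms with codomain $M$ in $\C^{\mathrm{op}}$, that $\approx_M$ corresponds to $\sim_M$ there, and then invoke Lemma~\ref{lem:simequiv} in $\C^{\mathrm{op}}$.
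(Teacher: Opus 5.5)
Your proof is correct and takes the same approach as the paper, which simply states that the lemma is dual to Lemma~\ref{lem:simequiv}. Your explicit argument, using that $f$ and $g$ are epimorphisms to cancel on the right and obtain $pq=\id_P$ and $qp=\id_Q$, is exactly what that duality amounts to, and you correctly match the isomorphism $Q\to P$ in the definition of $\approx_M$ with $p$.
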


\begin{proof}
Dual to Lemma~\ref{lem:simequiv}.
\end{proof}

Let $\F : \C \to \D$ be a functor
that preserves epimorphisms.
Take $M \in \C_0$. Define
$\beta_{M} : 
\Quot_{\C}(M) \to \Quot_{\D}(\F(M))$ 
by  $\beta_{M}( [ e ]_{\approx_M} ) = 
[\F(e)]_{\approx_{\F(M)}}$,
for epimorphisms $e : M \to P$.

\begin{prop}\label{prop:injectiveagain}
Let $\F : \C \to \D$ be a functor
that preserves epimorphisms. Then
$\beta_{M}$ is well-defined.
If $\F$ is also separable, then
$\beta_{M}$ is injective.
\end{prop}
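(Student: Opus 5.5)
The argument dualizes the proof of Proposition~\ref{prop:injective}, using Lemma~\ref{lem:approx} in place of Lemma~\ref{lem:simequiv}. We first settle well-definedness, then injectivity.

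\emph{Well-definedness.} Let $f : M \to P$ and $g : M \to Q$ be epimorphisms with $f \approx_M g$. Then there is an isomorphism $h : Q \to P$ with $f = hg$. Applying $\F$ gives $\F(f) = \F(h)\F(g)$, where $\F(h)$ is an isomorphism with inverse $\F(h^{-1})$. Since $\F$ preserves epimorphisms, $\F(f)$ and $\F(g)$ are epimorphisms in $\D$ with domain $\F(M)$. Hence $\F(f) \approx_{\F(M)} \F(g)$, so $\beta_M$ does not depend on the chosen representative.

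\emph{Injectivity.} Assume now that $\F$ is also separable, and suppose $[\F(f)]_{\approx_{\F(M)}} = [\F(g)]_{\approx_{\F(M)}}$. By Lemma~\ref{lem:approx}, there are morphisms $p : \F(Q) \to \F(P)$ and $q : \F(P) \to \F(Q)$ with $\F(f) = p\F(g)$ and $\F(g) = q\F(f)$. In each of these composites the right-hand factor lies in the image of $\F$, so (SF3) applies. Combining it with (SF1) gives
\[
f = \R_{M,P}(\F(f)) = \R_{M,P}(p\F(g)) = \R_{Q,P}(p)\,\R_{M,Q}(\F(g)) = \R_{Q,P}(p)\, g,
\]
and symmetrically $g = \R_{P,Q}(q)\, f$. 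Lemma~\ref{lem:approx}, applied in $\C$, then yields $[f]_{\approx_M} = [g]_{\approx_M}$, which proves injectivity.

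The only point requiring care is the applicability of (SF3): it needs one of the two composed morphisms to be in the image of $\F$. That condition holds here, since $\F(g)$ and $\F(f)$ are images of morphisms of $\C$, so no step presents a genuine obstacle.
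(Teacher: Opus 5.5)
Your proof is correct and is exactly the dualization of Proposition~\ref{prop:injective} that the paper invokes. Well-definedness holds because $\F(h)$ is an isomorphism and $\F$ preserves epimorphisms. Injectivity follows from Lemma~\ref{lem:approx} in $\D$, then (SF1) and (SF3), then Lemma~\ref{lem:approx} in $\C$; (SF3) applies because the first-applied factor $\F(g)$, resp.\ $\F(f)$, lies in the image of $\F$. The paper's extra pointer to Proposition~\ref{prop:retractionsection}(b) is not needed for this argument.
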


\begin{proof}
Dual to Proposition~\ref{prop:injective}, using Proposition~\ref{prop:retractionsection}(b). % in place of Proposition~\ref{prop:retractionsection}(a).
\end{proof}

We say that an object $M \in \C_0$ is
quotient trivial if 
$|\Quot_{\C}(M)| = 1$,
quotient nontrivial if 
$|\Quot_{\C}(M)| \geq 2$, and
quotient simple if 
$|\Quot_{\C}(M)|=2$.

\begin{prop}
Suppose that $\F : \C \to \D$ is a separable
functor which preserves epimorphisms. 
Let $M \in \C_0$ be quotient nontrivial
and $\F(M)$ quotient simple in $\D$. Then
$M$ is quotient simple in $\C$.
\end{prop}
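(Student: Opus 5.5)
The plan is to mirror the proof of Proposition~\ref{prop:subobjectsimple}, replacing subobjects by quotient objects and Proposition~\ref{prop:injective} by Proposition~\ref{prop:injectiveagain}. The argument is a simple cardinality count: we show $|\Quot_{\C}(M)| \leq 2$ by pushing quotient objects forward along $\F$, and $|\Quot_{\C}(M)| \geq 2$ holds by hypothesis.

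First, since $\F$ preserves epimorphisms, the map $\beta_M : \Quot_{\C}(M) \to \Quot_{\D}(\F(M))$, $[e]_{\approx_M} \mapsto [\F(e)]_{\approx_{\F(M)}}$, is well defined by Proposition~\ref{prop:injectiveagain}. Since $\F$ is moreover separable, the same proposition says that $\beta_M$ is injective. Next, $\F(M)$ is quotient simple in $\D$, so $|\Quot_{\D}(\F(M))| = 2$. Injectivity of $\beta_M$ then gives $|\Quot_{\C}(M)| \leq 2$. On the other hand, $M$ is quotient nontrivial, so $|\Quot_{\C}(M)| \geq 2$. Combining the two inequalities gives $|\Quot_{\C}(M)| = 2$, that is, $M$ is quotient simple in $\C$.

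There is essentially no obstacle beyond making sure that Proposition~\ref{prop:injectiveagain} really supplies injectivity. Its proof was only stated as dual to Proposition~\ref{prop:injective}, so if more care is wanted I would verify it directly. Suppose $e : M \to P$ and $e' : M \to Q$ are epimorphisms with $[\F(e)] = [\F(e')]$. Lemma~\ref{lem:approx} gives morphisms $p : \F(Q) \to \F(P)$ and $q : \F(P) \to \F(Q)$ with $\F(e) = p\F(e')$ and $\F(e') = q\F(e)$. Applying (SF1) and (SF3) of Proposition~\ref{prop:newsep} (here the right-hand factor lies in the image of $\F$) yields
\[
e = \R_{Q,P}(p)\, e' \quad \text{and} \quad e' = \R_{P,Q}(q)\, e .
\]
Lemma~\ref{lem:approx} then gives $[e]_{\approx_M} = [e']_{\approx_M}$. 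This confirms injectivity, and the counting argument above completes the proof.
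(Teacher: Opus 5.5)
Your proposal is correct and follows the paper's own route: the paper proves this by dualizing Proposition~\ref{prop:subobjectsimple}, using injectivity of $\beta_M$ from Proposition~\ref{prop:injectiveagain} together with $|\Quot_{\D}(\F(M))| = 2$ and quotient nontriviality of $M$. Your direct check of injectivity of $\beta_M$ via Lemma~\ref{lem:approx}, (SF1) and (SF3) is also sound; (SF3) applies because the factor $\F(e')$, respectively $\F(e)$, lies in the image of $\F$.
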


\begin{proof}
Dual to Proposition \ref{prop:subobjectsimple}, 
using Proposition \ref{prop:injectiveagain}. 
%in place of Proposition \ref{prop:injective}.
\end{proof}

Let $M \in \C_0$.
We say that $M$ is quotient semisimple
if every epimorphism $e : M \to P$ has a 
section $s : P \to M$.

\begin{prop}
Suppose that $\F : \C \to \D$ is a separable
functor which preserves epimorphisms. 
Let $M \in \C_0$ have the property that
$\F(M)$ is quotient semisimple in $\D$. Then
$M$ is quotient semisimple in $\C$.
\end{prop}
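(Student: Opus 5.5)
This is the dual of Proposition~\ref{prop:Fsemisimple}, and the plan is to run that argument with arrows reversed, using Proposition~\ref{prop:retractionsection}(b) in place of part (a).

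Let $e : M \to P$ be an epimorphism in $\C$; we must produce a section $s : P \to M$ with $e s = \id_P$.

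\begin{enumerate}
\item Since $\F$ preserves epimorphisms, $\F(e) : \F(M) \to \F(P)$ is an epimorphism in $\D$ with domain $\F(M)$.
\item Because $\F(M)$ is quotient semisimple in $\D$, the epimorphism $\F(e)$ has a section $t : \F(P) \to \F(M)$, that is, $\F(e)\, t = \id_{\F(P)}$.
\item By Proposition~\ref{prop:retractionsection}(b), applied with $f = e$, $N = P$ and $s = t$, the morphism $\R_{P,M}(t) : P \to M$ is a section of $e$.
\end{enumerate}

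Since $e$ was an arbitrary epimorphism out of $M$, this shows that $M$ is quotient semisimple in $\C$.

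If one prefers not to quote Proposition~\ref{prop:retractionsection}(b), the section property can be checked directly from (SF3), (SF1) and Proposition~\ref{prop:RFid}(a). Since $\F(e)$ lies in the image of $\F$, (SF3) applies to the composite $\F(e)\,t$:
\[
e\,\R_{P,M}(t) = \R_{M,P}(\F(e))\,\R_{P,M}(t) = \R_{P,P}(\F(e)\,t) = \R_{P,P}(\id_{\F(P)}) = \id_P .
\]

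No step is a genuine obstacle. The only point needing care is the direction of the arrows: the section of $\F(e)$ is a morphism $\F(P) \to \F(M)$, so it is $\R_{P,M}$, not $\R_{M,P}$, that must be applied.
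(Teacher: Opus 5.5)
Your proposal is correct and follows the paper's own argument: the paper proves this statement as the dual of Proposition~\ref{prop:Fsemisimple}, using Proposition~\ref{prop:retractionsection}(b), which is exactly your steps 1--3. Your direct check via (SF1), (SF3) and Proposition~\ref{prop:RFid}(a) is also valid, and you apply $\R_{P,M}$ in the correct direction.
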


\begin{proof}
Dual to Proposition~\ref{prop:Fsemisimple}, using Proposition~\ref{prop:retractionsection}(b). % in place of Proposition~\ref{prop:retractionsection}(a).
\end{proof}

\begin{exa}
(a) In $\Set$, the epimorphisms are
precisely the surjective maps.
Using this, it is easy to see that
all sets are quotient semisimple. 
All two-element sets are quotient simple since, up to equivalence, there are precisely two surjections from such a set: the identity map and the unique surjection onto a singleton.
One can easily show that sets 
of other cardinalities are not
quotient simple.
Thus, in $\Set$, every quotient simple
object is trivially quotient semisimple.

(b) By taking the dual of the category in
Example~\ref{ex:simplenotsemisimple}(b), 
we see that for arbitrary categories,
a quotient simple object is not  
necessarily quotient semisimple.
\end{exa}

Recall that $I \in \C_0$ is called 
initial if
$|\Hom_{\C}(I,M)|=1$ for all $M \in \C_0$. 
Dually, \(T \in \C_0\) is called 
terminal if $|\Hom_{\C}(M,T)|=1$,
for all $M \in \C_0$.
An object that is 
both initial and terminal is called a 
zero object. 
A category with a zero object is called 
pointed.

\begin{prop}
Suppose that $\F : \C \to \D$ is a separable
functor. Let $I,Z,T \in \C_0$. 
\begin{itemize}[widest=(a)]

\item[{\rm (a)}] $\F(I)$ initial
in $\D$ $\Longrightarrow$ $I$ initial in $\C$;

\item[{\rm (b)}] $\F(T)$ terminal
in $\D$ $\Longrightarrow$ 
$T$ terminal in $\C$;

\item[{\rm (c)}] $\F(Z)$ a 
zero object in $\D$ $\Longrightarrow$ 
$Z$ a zero object in $\C$.

\end{itemize}
\end{prop}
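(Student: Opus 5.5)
For (a), I would argue directly from the definition of separability rather than through Lemma~\ref{lem:limitsreflect}. The reason is that an initial object is the colimit of the empty diagram, and the empty diagram $\J \to \C$ composed with $\F$ is again the empty diagram in $\D$. Lemma~\ref{lem:limitsreflect} could therefore be applied verbatim. Still, a two-line direct argument is cleaner and makes explicit what separability contributes.

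Fix $M \in \C_0$ and suppose $\F(I)$ is initial in $\D$.
\begin{itemize}
\item \emph{Existence.} Since $\F(I)$ is initial, $\Hom_{\D}(\F(I),\F(M))$ is nonempty. By Proposition~\ref{prop:RFid}(b), $\Hom_{\C}(I,M) \neq \emptyset$. Concretely, $\R_{I,M}$ applied to the unique morphism $\F(I) \to \F(M)$ gives a morphism $I \to M$.
\item \emph{Uniqueness.} If $f,g : I \to M$, then $\F(f)$ and $\F(g)$ both lie in $\Hom_{\D}(\F(I),\F(M))$, which has exactly one element, so $\F(f)=\F(g)$. Faithfulness, Proposition~\ref{prop:RFid}(c), then gives $f=g$.
\end{itemize}
Hence $|\Hom_{\C}(I,M)|=1$ for all $M$, so $I$ is initial in $\C$.

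Part (b) is the dual statement. The argument is the same, using $\Hom_{\D}(\F(M),\F(T))$ in place of $\Hom_{\D}(\F(I),\F(M))$, together with Proposition~\ref{prop:RFid}(b) and (c).

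Part (c) follows immediately by combining (a) and (b) applied to $Z$.

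I do not expect a real obstacle here. The only point needing care is that nonemptiness is transferred through $\R$, or equivalently through Proposition~\ref{prop:RFid}(b), and uniqueness through faithfulness; both facts are already available.
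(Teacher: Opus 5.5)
Your proof is correct and is essentially the paper's. The paper simply cites Lemma~\ref{lem:limitsreflect} for the empty diagram, and that lemma's proof, specialized to the empty diagram, is exactly your argument: existence via $\R$ and uniqueness via faithfulness. Unwinding it directly has the small benefit of showing that only (SF0) and (SF1) are used here, since the cone-compatibility conditions that need (SF3) in the lemma are vacuous.
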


\begin{proof}
This follows from Lemma~\ref{lem:limitsreflect}, 
since terminal objects are limits of the empty diagram 
and initial objects are colimits of the empty diagram.
\end{proof}

Let $\C$ be a pointed category with a
zero object $Z \in \C_0$.
Take \(M,N \in \C_0\). 
We denote the unique morphisms 
\(Z \to M\) and \(M \to Z \) by \(0_{Z,M}\) 
and \(0_{M,Z}\), respectively, and
we set $0_{M,N} := 0_{Z,N} 0_{M,Z}$.
Clearly, $0_{Z,N}$ is a monomorphism and 
$0_{M,Z}$ is an epimorphism.

\begin{lem}\label{lem:pointed}
Suppose that $\C$ is a pointed category 
with a zero object $Z$. Let $M \in \C_0$.
The following statements are equivalent:
\begin{itemize}[widest=(iii)]

\item[{\rm (i)}] $M$ is a zero object in $\C$; 
\quad
{\rm (ii)} there is an isomorphism $Z \to M$; 

\item[{\rm (iii)}] $0_{Z,M} \sim_M \id_M$;
\quad
{\rm (iv)} $M$ is subobject trivial;

\item[{\rm (v)}] $0_{M,Z} \approx_M \id_M$;
\quad
{\rm (vi)} $M$ is quotient trivial.

\end{itemize}
\end{lem}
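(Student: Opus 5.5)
We will prove the chain (i)$\Rightarrow$(ii)$\Rightarrow$(iii)$\Rightarrow$(iv)$\Rightarrow$(i), and dually (ii)$\Rightarrow$(v)$\Rightarrow$(vi)$\Rightarrow$(i).

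\emph{(i)$\Rightarrow$(ii).} If $M$ is a zero object, then $0_{Z,M}$ and $0_{M,Z}$ are the unique morphisms between $Z$ and $M$. Both composites $0_{M,Z}0_{Z,M}$ and $0_{Z,M}0_{M,Z}$ are endomorphisms of a zero object. By uniqueness they equal $\id_Z$ and $\id_M$, so $0_{Z,M}$ is an isomorphism.

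\emph{(ii)$\Rightarrow$(iii).} Let $h : Z \to M$ be an isomorphism. Since $Z$ is initial, $h = 0_{Z,M}$, so $0_{Z,M} = \id_M h$ with $h$ an isomorphism. Also $0_{Z,M}$ is a monomorphism, as noted before the lemma. Hence $0_{Z,M} \sim_M \id_M$.

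\emph{(iii)$\Rightarrow$(iv).} We show every subobject of $M$ equals $[\id_M]_{\sim_M}$. Let $m : P \to M$ be a monomorphism. By Lemma~\ref{lem:simequiv} applied to (iii), there is $q : M \to Z$ with $\id_M = 0_{Z,M} q$.
\begin{itemize}
\item Put $p := m$, so that $m = \id_M p$.
\item Put $q' := 0_{Z,P} q$. Then $m q' = m\, 0_{Z,P}\, q = 0_{Z,M}\, q = \id_M$, using that $m\,0_{Z,P} = 0_{Z,M}$ because $Z$ is initial.
\end{itemize}
Lemma~\ref{lem:simequiv} then gives $[m]_{\sim_M} = [\id_M]_{\sim_M}$. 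Since $\id_M$ is always a monomorphism, $\Sub_\C(M)$ is nonempty, so $|\Sub_\C(M)| = 1$.

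\emph{(iv)$\Rightarrow$(i).} The monomorphisms $0_{Z,M}$ and $\id_M$ both represent subobjects of $M$, so triviality gives $0_{Z,M} \sim_M \id_M$. Hence there is an isomorphism $h : Z \to M$, and an object isomorphic to a zero object is itself a zero object. Concretely, for any $N$ we have bijections $\Hom(M,N) \cong \Hom(Z,N)$ and $\Hom(N,M) \cong \Hom(N,Z)$, given by composing with $h$ and $h^{-1}$. So $M$ is both initial and terminal.

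\emph{The quotient statements.} The same argument in the dual category, using Lemma~\ref{lem:approx} and the fact that $0_{M,Z}$ is an epimorphism, gives (ii)$\Rightarrow$(v)$\Rightarrow$(vi)$\Rightarrow$(i). This is legitimate because the hypotheses are self-dual: $\C^{op}$ is pointed with the same zero object, and subobjects in $\C^{op}$ are quotient objects in $\C$. For (ii)$\Rightarrow$(v), if $Z \to M$ is an isomorphism then its inverse is an isomorphism $M \to Z$, which must equal $0_{M,Z}$ since $Z$ is terminal.

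The only step needing care is (iii)$\Rightarrow$(iv), where we must build the comparison map $q' : M \to P$. The key identity is $m\,0_{Z,P} = 0_{Z,M}$, which follows from initiality of $Z$; the construction then goes through and nothing else is delicate.
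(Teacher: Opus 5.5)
Your proof is correct and follows essentially the same route as the paper: a cycle through subobject triviality via Lemma~\ref{lem:simequiv}, with the quotient half obtained by duality. The only cosmetic differences are that you prove (i)$\Rightarrow$(ii) instead of citing it as well known, and that in (iii)$\Rightarrow$(iv) you compare each monomorphism $m$ directly with $\id_M$ (via $q' = 0_{Z,P}\,q$), whereas the paper first identifies the isomorphism as $0_{M,Z}$ and then compares $m$ with $0_{Z,M}$.
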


\begin{proof}
It suffices to prove (i)$\Rightarrow$(iii)$\Rightarrow$(iv)$\Rightarrow$(i), because the equivalence (i)$\Leftrightarrow$(ii) is well known, and the circle (i)$\Rightarrow$(v)$\Rightarrow$(vi)$\Rightarrow$(i) is dual.

(i)$\Rightarrow$(iii):
Suppose that $M$ is a zero object. Then it follows that
$0_{Z,M}:Z\to M$ is an isomorphism, so 
$0_{Z,M}=\id_M\,0_{Z,M}$. Hence $0_{Z,M}\sim_M \id_M$.

(iii)$\Rightarrow$(iv):
Suppose that $0_{Z,M}\sim_M \id_M$.
Let $h:P\to M$ be a monomorphism.
Choose an isomorphism $g:M\to Z$ such that
$\id_M=0_{Z,M}g$.
Since $Z$ is a zero object, $g=0_{M,Z}$. Thus
$h=\id_M h=0_{Z,M}0_{M,Z}h$.
Now $0_{M,Z}h:P\to Z$, so by uniqueness of morphisms into $Z$,
$0_{M,Z}h=0_{P,Z}$. Hence
$h=0_{Z,M}0_{P,Z}$. Also,
$h0_{Z,P}=0_{Z,M}0_{P,Z}0_{Z,P}=0_{Z,M}$,
since $0_{P,Z}0_{Z,P}=\id_Z$.
Therefore, by Lemma~\ref{lem:simequiv}, \(h\sim_M 0_{Z,M}\).
Thus every subobject of \(M\) equals \([0_{Z,M}]_{\sim_M}\), so \(M\) is subobject trivial.

(iv)$\Rightarrow$(i):
Suppose that $M$ is subobject trivial.
$0_{Z,M}\sim_M \id_M$.
Hence there exists an isomorphism \(f:M\to Z\) such that
$\id_M=0_{Z,M}f$.
Therefore \(0_{Z,M}\) is an isomorphism. Since \(Z\) is a zero object, it follows that \(M\) is a zero object.
\end{proof}

\begin{prop}\label{prop:simpleimpliessemisimple}
Let $\C$ be a pointed category
with a zero object $Z$.
Let $M \in \C_0$.
\begin{itemize}[widest=(a)]

\item[{\rm (a)}] $M$ subobject simple
$\Longrightarrow$ $M$ subobject semisimple;

\item[{\rm (b)}] $M$ quotient simple
$\Longrightarrow$ $M$ quotient semisimple.

\end{itemize}
\end{prop}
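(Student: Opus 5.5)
Since $M$ is subobject simple, $|\Sub_{\C}(M)|=2$. Both $[\id_M]_{\sim_M}$ and $[0_{Z,M}]_{\sim_M}$ lie in $\Sub_{\C}(M)$, because $0_{Z,M}$ is a monomorphism. By Lemma~\ref{lem:pointed}, (iii)$\Leftrightarrow$(iv), the object $M$ is not subobject trivial, so $0_{Z,M}\not\sim_M \id_M$. Hence these are exactly the two subobjects of $M$.

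Now let $m : P \to M$ be an arbitrary monomorphism. Then $[m]_{\sim_M}$ equals one of these two classes, and in each case we produce a retraction.

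Case 1: $m \sim_M \id_M$. Then there is an isomorphism $h : P \to M$ with $m = \id_M h = h$. So $m$ is itself an isomorphism, and $r := m^{-1}$ satisfies $rm = \id_P$.

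Case 2: $m \sim_M 0_{Z,M}$. Then there is an isomorphism $h : P \to Z$ with $m = 0_{Z,M} h$. Since $P \cong Z$, the object $P$ is a zero object by Lemma~\ref{lem:pointed}, (ii)$\Rightarrow$(i). In particular $\Hom_{\C}(P,P)$ has exactly one element, namely $\id_P$. Take $r := 0_{M,P}$ (or any morphism $M \to P$, which exists since $P$ is terminal). Then $rm : P \to P$, so $rm = \id_P$.

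Thus every monomorphism into $M$ has a retraction, and $M$ is subobject semisimple. The only point requiring care is the observation that the two subobjects are precisely $[\id_M]$ and $[0_{Z,M}]$; this is settled by Lemma~\ref{lem:pointed}.

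Part (b) is the dual statement. The two quotient objects of $M$ are $[\id_M]_{\approx_M}$ and $[0_{M,Z}]_{\approx_M}$, which are distinct by (v)$\Leftrightarrow$(vi) of Lemma~\ref{lem:pointed}. An epimorphism $e : M \to P$ is then either an isomorphism, whose inverse is a section, or has codomain isomorphic to $Z$. In the latter case $P$ is a zero object, so any morphism $s : P \to M$ satisfies $es = \id_P$.
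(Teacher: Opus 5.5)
Your proof is correct and follows the paper's argument essentially verbatim. You identify the two subobjects as $[0_{Z,M}]_{\sim_M}$ and $[\id_M]_{\sim_M}$ via Lemma~\ref{lem:pointed}, and you split into the isomorphism case (retraction $m^{-1}$) and the zero-object case (retraction $0_{M,P}$, using $\Hom_{\C}(P,P)=\{\id_P\}$), with (b) handled dually. You also state explicitly, via the contrapositive of (iii)$\Rightarrow$(iv), why the two classes are distinct, a point the paper leaves implicit.
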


\begin{proof}
We prove (a). The proof of (b) is dual, using Lemma~\ref{lem:pointed}\textup{(v)} in place of Lemma~\ref{lem:pointed}\textup{(iii)}.

Suppose that $M$ is subobject simple.
By Lemma \ref{lem:pointed}(iii), 
$\Sub_{\C}(M)$ consists of the two distinct
classes $[0_{Z,M}]_{\sim_M}$ and
$[\id_M]_{\sim_M}$.
Let $m : P \to M$ be a monomorphism.
We now show that $m$ has a retraction.
To this end, we consider two cases.

Case 1: $[m]_{\sim_M} = [0_{Z,M}]_{\sim_M}$.
Then there is an isomorphism
$f : P \to Z$ with $m = 0_{Z,M} f$. 
But since $0_{P,Z}$ is the unique
morphism $P \to Z$ it follows that $f=0_{P,Z}$
is an isomorphism and 
$m = 0_{Z,M} 0_{P,Z} = 0_{P,M}$.
Put $r := 0_{M,P}$. Then $rm : P \to P$.
By Lemma \ref{lem:pointed}(i),
$P$ is a zero object of $\C$. 
In particular, 
$\id_P$ is the unique morphism $P \to P$,
so $rm = \id_P$.

Case 2: $[m]_{\sim_M} = [\id_M]_{\sim_M}$.
Then there is an isomorphism
$f : P \to M$ with $m = \id_M f = f$.
But then $rm = \id_P$ for $r:= f^{-1}$. 
\end{proof}

\begin{exa}
Let $\Grp$ denote the category of groups.
It is well known that this category is
pointed with the trivial group as a 
zero object. Let $G$ denote a group.

(a) In $\Grp$, monomorphisms are exactly 
injective homomorphisms, and two 
monomorphisms into $G$ represent the 
same subobject of $G$ if and only if 
they have the same image in $G$.
Therefore, the elements of $\Sub_{\Grp}(G)$
correspond to the set of subgroups of $G$.
By Cauchy's theorem 
(see~\cite[Thm. 11, p.~93]{dummit2004}), 
$G$ is subobject simple precisely when
$G$ is a cyclic group of prime order. 
By a result by 
Baer~\cite[Thm. 3]{baer1946}, $G$ is
subobject semisimple if and only if
$G$ is an abelian group all of whose elements have
finite square-free order. 
Note that Baer~\cite{baer1946}
does not use our categorical terminology.

(b) In $\Grp$, epimorphisms are precisely
surjective homomorphisms, and the quotient
object of $G$ represented by an epimorphism 
$q : G \to Q$ corresponds to the quotient group 
$G / \ker(q)$. So quotient objects of $G$ correspond 
to isomorphism classes of quotient 
groups of $G$.
A group $G$ is therefore 
quotient simple in the categorical
sense if and only if $G$ has no nontrivial
proper normal subgroups. 
Thus, quotient simple groups 
are exactly the simple groups in the
classical sense.
Furthermore, $G$ is 
quotient semisimple if and 
only if every normal subgroup of $G$ is 
complemented. 
Indeed, an epimorphism $G \to Q$ 
splits precisely when its kernel $N$ 
has a complement $H$ in $G$ with 
$G = NH$ and $N \cap H = 1$, so that 
$G \cong N \rtimes H$. 
Thus quotient semisimple groups are exactly 
the $nC$-groups studied in the 
group-theoretic literature
(see, e.g., \cite{christensen1967}).
To the author's knowledge, no complete classification
of $nC$-groups is known.
\end{exa}

Recall that $P \in \C_0$ is called
projective if for every epimorphism
$e : M \to N$ and every 
$f : P \to N$, there is a morphism
$g : P \to M$ with $e g = f$. 
Dually, $Q \in \C_0$ is called injective if for every monomorphism 
$m : M \to N$ and every  
$f : M \to Q$, there is a morphism
$g : N \to Q$ with $g m = f$.

\begin{prop}\label{prop:Fseparableprojective}
Suppose that $\F : \C \to \D$ is a separable
functor. Let $P,Q \in \C_0$.
\begin{itemize}[widest=(a)]

\item[{\rm (a)}] $\F(P)$ projective and
$\F$ preserves epimorphisms
$\Longrightarrow$ $P$ projective;

\item[{\rm (b)}] $\F(Q)$ injective and
$\F$ preserves monomorphisms
$\Longrightarrow$ $Q$ injective.

\end{itemize}
\end{prop}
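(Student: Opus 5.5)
For (a), let $e : M \to N$ be an epimorphism in $\C$ and $f : P \to N$ a morphism. Since $\F$ preserves epimorphisms, $\F(e) : \F(M) \to \F(N)$ is an epimorphism in $\D$. Because $\F(P)$ is projective, there is a morphism $h : \F(P) \to \F(M)$ with $\F(e) h = \F(f)$. We then define $g := \R_{P,M}(h) : P \to M$. By (SF3), which applies because $\F(e) \in \F(\Hom_{\C}(M,N))$, and then (SF1), we get
\[
e g = \R_{M,N}(\F(e)) \, \R_{P,M}(h) = \R_{P,N}(\F(e) h) = \R_{P,N}(\F(f)) = f .
\]
Hence $P$ is projective.

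Part (b) is the dual argument. Let $m : M \to N$ be a monomorphism and $f : M \to Q$ a morphism in $\C$. Then $\F(m)$ is a monomorphism, so injectivity of $\F(Q)$ gives $h : \F(N) \to \F(Q)$ with $h \F(m) = \F(f)$. We put $g := \R_{N,Q}(h)$. By (SF3), now with the right-hand factor $\F(m)$ lying in the image of $\F$, followed by (SF1),
\[
g m = \R_{N,Q}(h) \, \R_{M,N}(\F(m)) = \R_{M,Q}(h \F(m)) = \R_{M,Q}(\F(f)) = f .
\]

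The only point needing care is the hypothesis of (SF3): in each composite, one of the two factors must be in the image of $\F$. Here this holds because $\F(e)$, respectively $\F(m)$, is such a factor. We use Proposition~\ref{prop:newsep} to pass from (SF2) to (SF3). Alternatively, one can apply (SF2) directly with identity morphisms: for (a), take $f=\id_P$, $g=e$, $f'=h$, $g'=\F(f)$, so that $\F(e)h=\F(f)\F(\id_P)$ yields $e\R_{P,M}(h)=\R_{P,N}(\F(f))\id_P=f$.
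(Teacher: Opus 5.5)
Your proof is correct. It is essentially the argument of \cite[Prop.~1.2.2 and Prop.~1.2.3]{nastasescu1989}, which the paper simply cites: use projectivity of $\F(P)$ (respectively injectivity of $\F(Q)$) to lift or extend in $\D$ along the epimorphism $\F(e)$ (respectively the monomorphism $\F(m)$), then pull the result back with $\R$ via (SF1) and (SF3), or equivalently (SF2). This is the same pattern the paper itself uses in Proposition~\ref{prop:Fsemisimple}.
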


\begin{proof}
This is \cite[Prop. 1.2.2 and Prop. 1.2.3]{nastasescu1989}.
\end{proof}

\section{Module categories}\label{sec:modulecategories}

Let $A$ be a ring. 
By this we mean that $A$ is 
associative, but not necessarily unital.
Let $M$ be a left $A$-module. 
By this we mean
that $M$ is an additive group 
equipped with a biadditive 
map $A \times M \ni (a,m) \mapsto am \in M$
satisfying 
$(aa')m = a(a'm)$ for $a,a' \in A$
and $m \in M$. We let ${}_A \Mod$ denote
the category having left $A$-modules
as objects and $A$-linear maps as
morphisms. Similarly, the category
$\Mod_A$ of right $A$-modules is defined.
Let $B$ be another ring. If $M$ is a
left $A$-module and a right $B$-module, then
$M$ is said to be an $A$-$B$-bimodule if 
$(am)b = a(mb)$ for $a \in A$, $m \in M$ 
and $b \in B$.
We let ${}_A \Mod_B$ denote the category
having $A$-$B$-bimodules as objects and
as morphisms maps that are simultaneously
left $A$-linear and right $B$-linear.
If $M$ is an $A$-$B$-bimodule where
$A = B$, then $M$ is said 
to be an $A$-bimodule.

Suppose that $M$ is a left $A$-module.
Then $M$ is said to be unital
if there is $a \in A$ such that for every
$m \in M$, the equality $am = m$ holds.
We let $AM$
denote the set of all finite sums 
of elements of the form $am$ for $a \in A$
and $m \in M$. 
Following \'{A}nh and 
M\'{a}rki~\cite{anh1987}, 
we say that $M$ is
unitary if $AM = M$.
We let $\mu_{A,M}$ denote the 
multiplication map 
$A \otimes_A M \to M$ defined by 
the additive extension of
$\mu_{A,M}(a \otimes m) = am$,
for $a \in A$ and $m \in M$.
Following Quillen~\cite{quillen1997}, 
we say that $M$ is  
firm if $\mu_{A,M}$ is an 
isomorphism in ${}_A \Mod$.
Finally, following 
Tominaga~\cite{tominaga1976},
we say that $M$ is s-unital if
for every $m \in M$, we have $m \in Am$. 
The ring $A$ is said to be left unital
(s-unital, unitary, firm) if it is 
unital (s-unital, unitary, firm) as a
left $A$-module.
In the sequel, 
we will make use of the following result
by Tominaga \cite[Thm. 1]{tominaga1976}
(see also \cite{nystedt2019}):

\begin{prop}\label{prop:sunitalequivalent}
Suppose that $A$ is a ring and 
$M$ is a left $A$-module.
Then $M$ is s-unital if and only
if for each finite subset $X \subseteq M$
there exists $a \in A$ such that 
$a m = m$ for all $m \in X$.
\end{prop}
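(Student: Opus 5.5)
The nontrivial direction is ($\Rightarrow$), and I will argue by induction on $|X|$, with a trick that absorbs one element at a time. The direction ($\Leftarrow$) is immediate: given $m \in M$, apply the hypothesis to $X = \{m\}$ to get $a \in A$ with $am = m$, so $m \in Am$.

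For ($\Rightarrow$), suppose $M$ is s-unital. The case $|X| = 1$ is exactly the definition. Now let $X = \{m_1,\dots,m_n\}$ with $n \geq 2$, and assume the claim for sets with fewer than $n$ elements.

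First I will choose $a_n \in A$ with $a_n m_n = m_n$, which s-unitality provides. Next I will replace the remaining elements by $m_i' := m_i - a_n m_i$ for $i = 1,\dots,n-1$. By the induction hypothesis there is $b \in A$ with $b m_i' = m_i'$ for all $i \leq n-1$. The candidate is
\[
a := a_n + b - b a_n \in A,
\]
which is the usual ``unit-adjoining'' combination $1 - (1-b)(1-a_n)$ written without a unit.

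Then I check the identities. For $m_n$ we get
\[
a m_n = a_n m_n + b m_n - b a_n m_n = m_n + b m_n - b m_n = m_n .
\]
For $i < n$ we get
\[
a m_i = a_n m_i + b(m_i - a_n m_i) = a_n m_i + b m_i' = a_n m_i + m_i' = m_i .
\]
Only associativity $(ba_n)m = b(a_n m)$ and biadditivity of the action are used, so no unit of $A$ is needed.

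The main step is finding this combination $a_n + b - ba_n$ and making sure the induction is applied to the modified elements $m_i'$ rather than to the original $m_i$. After that, everything is a routine computation.
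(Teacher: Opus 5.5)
Your proof is correct. It is the standard induction argument: you choose $a_n$ with $a_n m_n=m_n$, apply the induction hypothesis to the corrected elements $m_i-a_nm_i$, and set $a=a_n+b-ba_n$; this is the argument of Tominaga's theorem, which the paper cites without reproving (the case $X=\emptyset$, which you omit, is trivial with $a=0$).
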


We introduce the following full subcategories of \({}_A \Mod\):
\begin{itemize}

\item \({}_A \UMod\), the category of unitary left \(A\)-modules;

\item \({}_A \FMod\), the category of firm left \(A\)-modules;

\item \({}_A \SMod\), the category of s-unital left \(A\)-modules;

\item \({}_A \Mod^1\), the category of 
unital left \(A\)-modules.

\end{itemize}

Clearly, the following implications hold
for these categories:
\begin{equation}\label{eq:implications}
\mbox{unital $\Longrightarrow$ s-unital
$\Longrightarrow$ unitary, \ \ and \ \ 
unital $\Longrightarrow$ firm 
$\Longrightarrow$ unitary.} 
\end{equation} 
By the following examples, none of these implications is reversible.

\begin{exa}\label{ex:examplesrings}
(a) s-unital $\nRightarrow$ unital:
Let \( K \) be a field, and 
consider the ring $K^{(\N)}$ consisting of 
all sequences $(k_n)_{n \in \N}$, where 
$k_n \in K$, and all but finitely many
$k_n = 0$, 
with pointwise addition and multiplication.
Then $K^{(\N)}$ is left s-unital but
not left unital. 

(b) unitary $\nRightarrow$ s-unital:
Consider the ring $C_0(\N,\mathbb{R})$
consisting of all real sequences 
$(a_n)_{n\in\N}$, such that 
$\lim_{n \to \infty} a_n = 0$, 
with pointwise addition and multiplication.
From 
$(a_n)_{n \in \N} =
(\sqrt{|a_n|})_{n \in \N} \cdot 
(\operatorname{sgn}(a_n)
\sqrt{|a_n|})_{n \in \N}$, it follows that
$C_0(\N,\mathbb{R})$ is unitary.
However, this ring is not left s-unital,
which is easily seen by considering the
sequence $b := (1/n)_{n \in \N}$;
there is no element 
$a \in C_0(\N,\mathbb{R})$ such that $ab = b$.

(c) unitary $\nRightarrow$ firm: see 
\cite[Example 1.2]{caenepeel1998}.

(d) firm $\nRightarrow$ unital:
The ring $K^{(\N)}$ from (a) is not 
unital. 
It is easy to see, either by a direct argument or by using Proposition~\ref{prop:sunitalimpliesfirm} below, that it is firm as a left module over itself.
\end{exa}

If we restrict to modules over left s-unital rings, 
then the classes of s-unital modules, firm modules, and unitary modules coincide.

\begin{prop}\label{prop:sunitalimpliesfirm}
Let $A$ be a left s-unital ring.
Then ${}_A \UMod = {}_A \FMod = {}_A \SMod$.
\end{prop}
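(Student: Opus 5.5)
We prove that, for a left s-unital ring $A$, the three classes of unitary, firm and s-unital left $A$-modules coincide. By \eqref{eq:implications}, firm $\Rightarrow$ unitary and s-unital $\Rightarrow$ unitary hold for arbitrary rings. So it suffices to show, for a left s-unital ring $A$ and a left $A$-module $M$, the two implications unitary $\Rightarrow$ s-unital and s-unital $\Rightarrow$ firm. Both will be driven by Proposition~\ref{prop:sunitalequivalent} applied to $A$ as a left module over itself: for every finite set $X \subseteq A$ there is $e \in A$ with $ex = x$ for all $x \in X$.

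\emph{Unitary $\Rightarrow$ s-unital.} Let $m \in M = AM$ and write $m = \sum_{i=1}^n a_i m_i$ with $a_i \in A$ and $m_i \in M$. Choose $e \in A$ with $e a_i = a_i$ for all $i$. Then
\[
em = \sum_i (e a_i) m_i = \sum_i a_i m_i = m,
\]
so $m \in Am$ and $M$ is s-unital.

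\emph{S-unital $\Rightarrow$ firm.} Since $M$ is unitary, $\mu_{A,M}$ is surjective, so only injectivity needs work. Take a general element $t = \sum_i a_i \otimes m_i \in A \otimes_A M$ with $\sum_i a_i m_i = 0$. Choose $e \in A$ with $e a_i = a_i$ for all $i$; this uses that $A$ is left s-unital. Then, by $A$-balancedness,
\[
t = \sum_i e a_i \otimes m_i = \sum_i e \otimes a_i m_i = e \otimes \sum_i a_i m_i = e \otimes 0 = 0 .
\]
Hence $\mu_{A,M}$ is injective, so it is bijective. It is clearly left $A$-linear, so it is an isomorphism in ${}_A\Mod$ and $M$ is firm.

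The injectivity step is the only nontrivial point. The key is to choose the local unit $e$ acting on the \emph{ring} elements $a_i$ rather than on the module elements. Only left s-unitality of $A$ is used here, not s-unitality of $M$: every unitary module is therefore already firm, and s-unitality of $M$ is recovered separately by the first implication.

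Combining the two implications with the general ones gives ${}_A\UMod = {}_A\FMod = {}_A\SMod$.
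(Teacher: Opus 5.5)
Your proof is correct and follows the paper's argument essentially verbatim. You use the same reduction to unitary $\Rightarrow$ s-unital $\Rightarrow$ firm, and in both steps you choose a left local unit $e$ for the finitely many ring elements $a_i$, with balancedness giving injectivity of $\mu_{A,M}$. Your remark that surjectivity needs only $AM=M$, so that unitary modules are already firm, is a harmless sharpening of the same computation.
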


\begin{proof}
Take $M \in {}_A \Mod$.
By \eqref{eq:implications}, it is enough 
to show $M$ unitary $\Rightarrow$
$M$ s-unital $\Rightarrow$ $M$ firm.
Suppose first that $M$ is unitary.
Take $m \in M$. 
Since $AM=M$ there are $n \in \N$,
$a_1,\ldots,a_n \in A$ and 
$m_1,\ldots,m_n \in M$ with 
$\sum_{i=1}^n a_i m_i = m$.
Since $A$ is left s-unital, there is,
by Proposition~\ref{prop:sunitalequivalent},
$b \in A$ with $b a_i = a_i$,
for all $i \in \{ 1,\ldots,n \}$.
Then $b m = 
\sum_{i=1}^n b a_i m_i = 
\sum_{i=1}^n a_i m_i = m$,
showing that $M$ is s-unital.

Suppose now that $M$ is s-unital.
Surjectivity of $\mu_{A,M}$ follows immediately from the
fact that $M$ is s-unital. Let us show that
$\mu_{A,M}$ is injective. Suppose that $z \in A \otimes_A M$
satisfies $\mu_{A,M}(z)=0$. Take $n \in \N$,
$a_i \in A$ and $x_i \in M$, for $i=1,\ldots,n$,
with $z = \sum_{i=1}^n a_i \otimes x_i$. Then
$\sum_{i=1}^n a_i x_i = \mu_{A,M}(z) = 0$.
Since $A$ is left s-unital, there exists $b \in A$
such that $b a_i = a_i$ for each $i$. Hence,
$z = \sum_{i=1}^n ba_i \otimes x_i = 
\sum_{i=1}^n b \otimes a_i x_i = 
b \otimes \sum_{i=1}^n a_i x_i = b \otimes 0 = 0$.
Therefore, $M$ is firm. 
\end{proof}

\begin{lem}\label{lem:Mfirmifsection}
Suppose that $A$ is a ring and  
$M$ is a left $A$-module.
Then $M$ is firm if and only if 
$\mu_{A,M} : A \otimes_A M \to M$ has 
a section in  ${}_A \Mod$.
\end{lem}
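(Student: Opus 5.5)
The forward implication is immediate: if $M$ is firm, then $\mu_{A,M}$ is an isomorphism in ${}_A\Mod$, and its inverse $\mu_{A,M}^{-1}$ is a section of it. So the substance lies in the converse. Suppose $\sigma : M \to A \otimes_A M$ is a left $A$-linear map with $\mu_{A,M}\sigma = \id_M$. Then $\mu_{A,M}$ is surjective, and it only remains to show that $\mu_{A,M}$ is injective. Equivalently, we must show $\sigma\mu_{A,M} = \id_{A\otimes_A M}$, since then $\sigma$ is a two-sided inverse.

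To check $\sigma\mu_{A,M} = \id$, it suffices to test on simple tensors $a \otimes m$ with $a \in A$ and $m \in M$, because these generate $A \otimes_A M$ additively and both sides are additive. We compute $\sigma\mu_{A,M}(a\otimes m) = \sigma(am) = a\,\sigma(m)$ by $A$-linearity of $\sigma$. Write $\sigma(m) = \sum_i a_i \otimes m_i$. The key observation is the identity in $A\otimes_A M$
\[
a \cdot \Bigl(\sum_i a_i \otimes m_i\Bigr) = \sum_i a a_i \otimes m_i = \sum_i a \otimes a_i m_i = a \otimes \mu_{A,M}\Bigl(\sum_i a_i\otimes m_i\Bigr).
\]
Here the middle equality uses the balancing relation $aa_i \otimes m_i = a \otimes a_i m_i$ of the tensor product over $A$. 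Applying this with $\sum_i a_i\otimes m_i = \sigma(m)$ gives
\[
\sigma\mu_{A,M}(a\otimes m) = a\otimes \mu_{A,M}\sigma(m) = a\otimes m.
\]
Hence $\sigma\mu_{A,M} = \id$, so $\mu_{A,M}$ is an isomorphism and $M$ is firm.

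The only point needing care is to fix the left module structure on $A\otimes_A M$ as $a'(a\otimes m) = a'a\otimes m$, with respect to which $\mu_{A,M}$ is $A$-linear. The whole argument then reduces to the general identity $a\,z = a\otimes\mu_{A,M}(z)$ for all $z \in A\otimes_A M$, which follows from $A$-balancedness. No unit or s-unitality is needed, so no results from the preceding propositions are required.
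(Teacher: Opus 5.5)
Your proof is correct and is essentially the paper's argument. The paper combines the same balancing identity $a\otimes\mu_{A,M}(w)=aw$ with left $A$-linearity of $\sigma$; it only phrases the conclusion as showing that every $z\in\ker(\mu_{A,M})$ satisfies $z=\sigma(\mu_{A,M}(z))=0$, rather than checking $\sigma\mu_{A,M}=\id$ on simple tensors.
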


\begin{proof}
The ``only if'' part is trivial.
Now we show the ``if'' part.
Suppose that $\mu_{A,M}$ has a section
$\sigma$ in ${}_A \Mod$. 
Then $\mu_{A,M}$ is surjective. 
It remains to
show that $\mu_{A,M}$ is injective.
Take $z \in \ker(\mu_{A,M})$. Write 
$z = \sum_{i=1}^n a_i \otimes x_i$ for some 
$n \in \N$, $a_i \in A$ and $x_i \in M$.
For each $i$, take $n_i \in \N$, 
$b_{ij} \in A$ and $y_{ij} \in M$ with
$\sigma(x_i) = \sum_{j=1}^{n_i} b_{ij} \otimes y_{ij}$.
Then
\[
\begin{array}{rcl}
z &=& 
\sum_{i=1}^n a_i \otimes x_i 
= \sum_{i=1}^n a_i \otimes 
\mu_{A,M}(\sigma(x_i))  
= \sum_{i=1}^n \sum_{j=1}^{n_i}
a_i \otimes b_{ij} y_{ij} \\
&=& 
\sum_{i=1}^n \sum_{j=1}^{n_i}
a_i b_{ij} \otimes y_{ij} 
= \sum_{i=1}^n a_i \sigma(x_i) 
= \sigma \left( \sum_{i=1}^n a_i x_i \right) \\
&=& 
(\sigma \circ \mu_{A,M}) 
\left( \sum_{i=1}^n a_i \otimes x_i \right) 
= \sigma( \mu_{A,M} (z) ) 
= \sigma(0) = 0,
\end{array} 
\]
showing that $\mu_{A,M}$ is injective.
\end{proof}

Let $f : B \to A$ be a ring
homomorphism, and $M$ a left $A$-module.
We view $M$ as a left $B$-module by 
letting $B$ act on $M$ via $f$, that is, 
for \(b \in B\) and \(m \in M\),
we define $b \cdot m := f(b)m$.
This defines the
restriction functor
$\Res_f : {}_A \Mod \to {}_B \Mod$.
We say that 
$f$ is left unital (respectively 
s-unital, firm, unitary) 
if the left $B$-module
$\Res_f(A)$ is unital 
(respectively s-unital, firm, unitary).

\begin{prop}
Let $f : B \to A$ be a ring homomorphism. 
\begin{itemize}[widest=(a)]

\item[{\rm (a)}] $f$ left unital
$\Longrightarrow$
$\Res_f : {}_A \UMod \to {}_B \Mod^1$;

\item[{\rm (b)}] $f$ left s-unital
$\Longrightarrow$
$\Res_f : {}_A \UMod \to {}_B\SMod$;

\item[{\rm (c)}] $f$ left unitary 
$\Longrightarrow$
$\Res_f : {}_A \UMod \to {}_B\UMod$;

\item[{\rm (d)}] $f$ left firm 
$\Longrightarrow$
$\Res_f : {}_A \FMod \to {}_B\FMod$.

\end{itemize}
\end{prop}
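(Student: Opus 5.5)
In each of the four parts we take $M$ in the indicated category of $A$-modules and check that $\Res_f(M)$, with the action $b\cdot m = f(b)m$, lands in the claimed category of $B$-modules; $\Res_f$ is then automatically a functor there, since morphisms and $A$-linear maps are unchanged. The common device is to transfer the relevant property of $\Res_f(A)$ to $M$ by writing elements of $M$ via $AM=M$.

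For (a), let $e\in B$ satisfy $f(e)a=a$ for all $a\in A$. For $m=\sum a_i m_i\in M$ we get $e\cdot m=\sum f(e)a_i m_i=m$, so $\Res_f(M)$ is unital.

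For (b), take $m=\sum_{i=1}^n a_i m_i$. By Proposition~\ref{prop:sunitalequivalent} applied to the s-unital $B$-module $\Res_f(A)$, there is $b\in B$ with $f(b)a_i=a_i$ for all $i$. Then $b\cdot m=m$, so $m\in Bm$.

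For (c), the inclusion $B\cdot M\subseteq M$ is trivial. Conversely, $M=AM=(BA)M$ since $A=f(B)A$, and $(BA)M\subseteq B\cdot(AM)=B\cdot M$ by expanding sums.

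Part (d) is the main obstacle, since firmness is not an elementwise condition. We use Lemma~\ref{lem:Mfirmifsection}: it suffices to produce a $B$-linear section of $\mu_{B,M}:B\otimes_B M\to M$. Since $M$ is a firm $A$-module, we have $M\cong A\otimes_A M$. Since $A$ is a firm left $B$-module, we have $A\cong B\otimes_B A$ as left $B$-modules, and this isomorphism is also right $A$-linear because $\mu_{B,A}$ is an $A$-bimodule map on the right. Hence
\[
M\cong A\otimes_A M\cong (B\otimes_B A)\otimes_A M\cong B\otimes_B (A\otimes_A M)\cong B\otimes_B M,
\]
using associativity of the tensor product, which holds for arbitrary bimodules without units. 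We then trace a simple tensor $b\otimes a\otimes m$ through the chain. It goes to $f(b)am = b\cdot(am)$ under the composite of the forward maps, and it corresponds to $b\otimes am$ in $B\otimes_B M$. So the composite isomorphism $B\otimes_B M\to M$ is $\mu_{B,M}$ up to these identifications, and in particular $\mu_{B,M}$ is an isomorphism. The delicate point is checking that each isomorphism in the chain is natural and compatible with multiplication; the rest is routine. If compatibility causes trouble, we would fall back on building the section $\sigma=(\id_B\otimes\mu_{A,M})\circ(\mu_{B,A}^{-1}\otimes\id_M)\circ\mu_{A,M}^{-1}$ explicitly and verifying $\mu_{B,M}\sigma=\id_M$.
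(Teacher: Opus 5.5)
Your proposal is correct and follows the paper. Parts (a)--(c) are argued exactly as there. For (d), the paper uses precisely your fallback section $\sigma=(\id_B\otimes\mu_{A,M})\circ(\mu_{B,A}^{-1}\otimes\id_M)\circ\mu_{A,M}^{-1}$, checks $\mu_{B,M}\circ\sigma=\id_M$ elementwise, and concludes by Lemma~\ref{lem:Mfirmifsection}. Your primary route for (d) uses the same three maps, but you observe that each is an isomorphism (rightly noting that $\mu_{B,A}^{-1}$ is right $A$-linear and $\mu_{A,M}$ is left $B$-linear, which the paper uses tacitly) and that $\mu_{B,M}\circ(\id_B\otimes\mu_{A,M})$ agrees with the composite on generators $b\otimes(a\otimes m)$; this gives bijectivity of $\mu_{B,M}$ directly, so the lemma is not even needed.
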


\begin{proof}
Let $M$ be a left $A$-module.

(a) Suppose that $f$ is left unital and
that $M$ is unitary as a left $A$-module. 
Since $A$ is unital as a left $B$-module,
there is $b \in B$ with $f(b)a = a$,
for all $a \in A$.
Take $m \in M$. Since $M$ is unitary
as a left $A$-module, there are $n \in \N$,
$a_1,\ldots,a_n \in A$ and 
$m_1,\ldots,m_n \in M$ with 
$m = \sum_{i=1}^n a_i m_i$. Then
$f(b)m = \sum_{i=1}^n f(b) a_i m_i = 
\sum_{i=1}^n a_i m_i = m$,
showing that $M$ is unital as a 
left $B$-module.

(b) Suppose that $f$ is left s-unital and
that $M$ is unitary as a left $A$-module.
Take $m \in M$. Then there are
$n \in \N$,
$a_1,\ldots,a_n \in A$ and 
$m_1,\ldots,m_n \in M$ such that
$m = \sum_{i=1}^n a_i m_i$.
Since $A$ is s-unital as a left $B$-module,
by Proposition \ref{prop:sunitalequivalent},
there is $b \in B$ such that
$f(b) a_i = a_i$ for all 
$i \in \{ 1,\ldots,n \}$.
Then $f(b) m = 
\sum_{i=1}^n f(b) a_i m_i = 
\sum_{i=1}^n a_i m_i = m$, 
showing that $M$ is s-unital as a
left $B$-module.

(c) Suppose that $f$ is left unitary and
that $M$ unitary as a left $A$-module.
Then $f(B)A = A$ and $AM = M$. Hence,
$f(B)M = f(B)AM = AM = M$, showing that 
$M$ is unitary as a left $B$-module.

(d) Suppose that  $f$ is left firm and
$M$ is firm as a left $A$-module.
By Lemma~\ref{lem:Mfirmifsection}, 
we need to show that $\mu_{B,M}$
has a section in 
${}_B \Mod$. To this end, we first 
note that, by the 
assumptions, $\mu_{B,A}$ has a section
$\sigma_{B,A}$ in ${}_B \Mod$ and 
$\mu_{A,M}$ has a section $\sigma_{A,M}$
in ${}_A \Mod$. Define 
$\sigma : M \to B \otimes_B M$ by
$\sigma := (\id_B \otimes \mu_{A,M})
\circ
(\sigma_{B,A} \otimes \id_M) 
\circ 
\sigma_{A,M}$.
Then $\sigma$, being a composition
of morphisms ${}_B \Mod$, is again a
morphism in ${}_B \Mod$.
We now show that 
$\mu_{B,M} \circ \sigma = \id_M$.
Take $m \in M$,
$p \in \N$, $m_1,\ldots,m_p \in M$
and $a_1,\ldots,a_p \in A$ with 
$\sigma_{A,M}(m) = 
\sum_{i=1}^p a_i \otimes m_i$.
By the assumptions, 
$\sum_{i=1}^p a_i m_i = 
(\mu_{A,M} \circ \sigma_{A,M})(m) = m$.
For each $i \in \{ 1,\ldots,p \}$,
take $q_i \in \N$, 
$a_1^{(i)},\ldots,a_{q_i}^{(i)} \in A$ and
$b_1^{(i)},\ldots,b_{q_i}^{(i)} \in B$ with
$\sigma_{B,A}(a_i) = 
\sum_{j=1}^{q_i} b_j^{(i)}\otimes a_j^{(i)}$.
By the assumptions, for each 
$i \in \{ 1,\ldots,p \}$, we get
$\sum_{j=1}^{q_i} f(b_j^{(i)}) a_j^{(i)} = 
(\mu_{B,A} \circ \sigma_{B,A})(a_i) = a_i$.
Therefore,
\[
\begin{array}{rcl}
 (\mu_{B,M} \circ \sigma)(m) 
&=& 
(\mu_{B,M} \circ (\id_B \otimes \mu_{A,M})
\circ
(\sigma_{B,A} \otimes \id_M) 
\circ 
\sigma_{A,M}) (m) \\
&=&
\sum_{i=1}^p (\mu_{B,M} \circ 
(\id_B \otimes \mu_{A,M}) \circ
(\sigma_{B,A} \otimes \id_M))
(a_i \otimes m_i) \\
&=&
\sum_{i=1}^p \sum_{j=1}^{q_i}
(\mu_{B,M} \circ 
(\id_B \otimes \mu_{A,M}))
(b_j^{(i)}\otimes a_j^{(i)} \otimes m_i) \\
&=& 
\sum_{i=1}^p \sum_{j=1}^{q_i}
\mu_{B,M}( b_j^{(i)} \otimes a_j^{(i)} m_i) \\
&=&
\sum_{i=1}^p \sum_{j=1}^{q_i} 
f(b_j^{(i)})  a_j^{(i)} m_i = 
\sum_{i=1}^p a_i m_i = m,
\end{array}
\]
showing that 
$\mu_{B,M} \circ \sigma = \id_M$.
\end{proof}

Let $f : B \to A$ be a ring homomorphism.
Consider $A$ as a $B$-bimodule via 
$f$, that is 
$b \cdot a := f(b)a$ and 
$a \cdot b := a f(b)$, 
for $a \in A$ and $b \in B$.
Recall from Section \ref{sec:introduction}
that $A/B$ is called
separable 
if the multiplication map 
$\mu_{A/B} : A \otimes_B A \to A$,
defined by
the additive extension of
$\mu(a \otimes a') = aa'$, for $a,a' \in A$,
has a section in ${}_A \Mod_A$,
that is if
there is an $A$-bimodule map
$\sigma : A \to A \otimes_B A$ such that
$\mu_{A/B} \circ \sigma = \id_A$. 

\subsubsection*{Proof of 
Theorem \ref{thm:main3}}
First we show the ``only if'' statement. 
Let $\Res_f$ be separable.
We wish to show that $A/B$ is separable.
Put $\sigma_{B,A} := \mu_{B,A}^{-1}$.
Let $\sigma' : A \to A \otimes_B A$
be the morphism in ${}_B \Mod_A$ defined by
$\sigma' := 
(f \otimes \id_A) \circ \sigma_{B,A}$. 
Then 
\begin{equation}\label{eq:identity}
\Res_f(\mu_{A/B}) \circ \sigma'= \id_A
\end{equation}
as morphisms in ${}_B \FMod$. Indeed,
take $a \in A$. Then
$\sigma_{B,A}(a) = 
\sum_{i=1}^n b_i \otimes a_i$
for some $n \in \N$, $b_i \in B$ and $a_i \in A$ with 
$\sum_{i=1}^n f(b_i) a_i = a$. Therefore
\[
\begin{array}{rcl}
(\Res_f(\mu_{A/B}) \circ \sigma')(a) 
&=& 
(\Res_f(\mu_{A/B}) \circ  
(f \otimes \id_A) \circ \sigma_{B,A})(a)  \\
&=& 
\sum_{i=1}^n (\Res_f(\mu_{A/B}) \circ  
(f \otimes \id_A)) (b_i \otimes a_i) \\
&=&
\sum_{i=1}^n 
\Res_f(\mu_{A/B}) (f(b_i) \otimes a_i) 
= \sum_{i=1}^n f(b_i) a_i = a.
\end{array}
\]
Since $A$ is firm, both ${}_A A$ and ${}_A(A \otimes_B A)$ belong to ${}_A\FMod$, so the map
$\R^{\Res_f}_{A,\;A\otimes_B A}$ is well defined.
Let $\sigma : A \to A \otimes_B A$ be the morphism in 
${}_A\Mod$ defined by
$\sigma := \R^{\Res_f}_{A,\;A\otimes_B A}(\sigma')$.
By (SF3) and \eqref{eq:identity},
$\mu_{A/B} \circ \sigma = \id_A$.
What remains to show is that $\sigma$ 
is a morphism in $\Mod_A$. 
To this end, take $a \in A$ and 
let $\alpha_a : A \to A$ and
$\beta_a : A \otimes_B A \rightarrow A \otimes_B A$ denote 
the left $B$-linear maps given by right 
multiplication by $a$.
Since $\sigma'$ is a morphism in ${}_B \Mod_A$, 
%left $B$-linear 
it follows that $\sigma' \circ \alpha_a = \beta_a \circ \sigma'$ as morphisms in ${}_B \Mod$.
%left $B$-linear maps.
By (SF3), 
$\sigma \circ \alpha_a = \beta_a \circ \sigma$ as left $A$-linear maps,
since $\Res_f( \alpha_a ) = \alpha_a$ and 
$\Res_f( \beta_a ) = \beta_a$.
Therefore, $\sigma$ is right $A$-linear.

Now we show the ``if'' statement.
Suppose that $\mu_{A/B}$ has a
section $\sigma$ in ${}_A \Mod_A$.
Let $M$ and $N$ be firm left $A$-modules.
Set $\sigma_{A,M} := \mu_{A,M}^{-1}$.
Define 
\[
\R_{M,N}^{\Res_f} : 
\Hom_B(\Res_f(M),\Res_f(N)) \to 
\Hom_A(M,N)
\]
in the following way. 
Let $i_N : A \otimes_B N \to A \otimes_A N$ be the 
map $a \otimes_B n \mapsto a \otimes_A n$.
Take $g \in \Hom_B( \Res_f(M) , \Res_f(N) )$. Put
\[
\R^{\Res_f}_{M,N}(g) :=
\mu_{A,N} \circ i_N \circ (\id_A \otimes g) \circ 
(\id_A \otimes \mu_{A,M}) \circ 
(\sigma \otimes \id_M) \circ \sigma_{A,M}.
\]
Then $\R_{M,N}^{\Res_f}(g)$, being 
defined as a 
composition of morphisms in ${}_A \Mod$, 
is again a morphism in ${}_A \Mod$.
To facilitate the rest of the proof, we now
describe $\R_{M,N}^{\Res_f}(g)$ elementwise. 
Take $m \in M$, $n \in \N$, $a_i \in A$ and $m_i \in M$
with $\sum_{i=1}^n a_i m_i = m$. For each $i$,
take $n_i \in \N$ and $a_{ij},a_{ij}' \in A$
such that 
$\sigma(a_i) = \sum_{j=1}^{n_i} 
a_{ij} \otimes a_{ij}'$. Then
\[
\begin{array}{cl}
  & \R_{M,N}^{\Res_f}(g)(m) \\
= & (\mu_{A,N} \circ i_N \circ (\id_A \otimes g) 
\circ (\id_A \otimes \mu_{A,M}) \circ 
(\sigma \otimes \id_M) \circ 
\sigma_{A,M})(m) \\
= & 
\sum_{i=1}^n (\mu_{A,N} \circ i_N \circ 
(\id_A \otimes g) \circ 
(\id_A \otimes \mu_{A,M}) \circ 
(\sigma \otimes \id_M))
( a_i \otimes m_i ) \\
= & 
\sum_{i=1}^n \sum_{j=1}^{n_i}
(\mu_{A,N} \circ i_N \circ (\id_A \otimes g) 
\circ (\id_A \otimes \mu_{A,M})) 
( a_{ij} \otimes a_{ij}' \otimes m_i ) \\
= & 
\sum_{i=1}^n \sum_{j=1}^{n_i}
(\mu_{A,N} \circ i_N \circ (\id_A \otimes g))
( a_{ij} \otimes a_{ij}' m_i ) \\
= & \sum_{i=1}^n \sum_{j=1}^{n_i}
\mu_{A,N} ( a_{ij} \otimes g(a_{ij}' m_i) ) 
= \sum_{i=1}^n \sum_{j=1}^{n_i}
a_{ij} g(a_{ij}' m_i).
\end{array}
\]
Now we show (SF1). Suppose that $g$ is a morphism
in ${}_A \Mod$. Then
\[
\begin{array}{rcl}
 \R_{M,N}^{\Res_f}(g)(m) 
 &=& 
 \sum_{i=1}^n \sum_{j=1}^{n_i}
a_{ij} g(a_{ij}' m_i)  
= 
g \left( \sum_{i=1}^n \sum_{j=1}^{n_i}
a_{ij} a_{ij}' m_i \right) \\
&=& g \left( \sum_{i=1}^n \sum_{j=1}^{n_i}
\mu_{A/B}(a_{ij} \otimes a_{ij}') m_i \right) \\
&=&  g \left( \sum_{i=1}^n 
\mu_{A/B} \left( \sum_{j=1}^{n_i} a_{ij} \otimes a_{ij}' \right)
m_i \right) \\
&=&  g \left( \sum_{i=1}^n 
\mu_{A/B}( \sigma(a_i) ) m_i \right) 
= g \left( \sum_{i=1}^n a_i m_i \right) 
= g(m).
\end{array}
\]
Finally, we show (SF3). 
Let $M,N,P$ be modules in ${}_A \FMod$. 
Let  $g \in \Hom_B(N,P)$ and 
$h \in \Hom_B(M,N)$. Take $m \in M$.
We consider two cases.

Case 1: $g \in \Res_f(\Hom_A(N,P))$,
that is $g$ is $A$-linear. Then
\[
\begin{array}{rcl}
\R_{M,P}^{\Res_f}(g \circ h)(m)    
&=& \sum_{i=1}^n \sum_{j=1}^{n_i}
a_{ij} g( h(a_{ij}' m_i))  
= g \left( \sum_{i=1}^n \sum_{j=1}^{n_i}
a_{ij} h(a_{ij}' m_i) \right) \\
&=& g \left( \R_{M,N}^{\Res_f}(h)(m) \right)
= \left( \R_{N,P}^{\Res_f}(g) \circ 
\R_{M,N}^{\Res_f}(h) \right)(m).
\end{array}
\]

Case 2: $h \in \Res_f(\Hom_A(M,N))$,
that is $h$ is $A$-linear.
Since $\sum_{i=1}^n a_i m_i = m$, 
it follows that 
$\sum_{i=1}^n a_i h(m_i) = h(m)$. Hence
\[
\begin{array}{rcl}
\R_{M,P}^{\Res_f}(g \circ h)(m)    
&=& \sum_{i=1}^n \sum_{j=1}^{n_i}
a_{ij} g( h(a_{ij}' m_i))  
= \sum_{i=1}^n \sum_{j=1}^{n_i}
a_{ij} g(a_{ij}' h(m_i))  \\
&=& \R_{N,P}^{\Res_f}(g)(h(m)) 
= \left( \R_{N,P}^{\Res_f}(g) \circ 
\R_{M,N}^{\Res_f}(h) \right)(m).
\end{array}
\]
This establishes (SF3).
By Proposition \ref{prop:newsep},
$\Res_f$ is separable. \qed

\begin{prop}
Let $f : B \to A$ and $g : C \to B$ be left firm ring homomorphisms.
Suppose that $A$ and $B$ are firm rings.
\begin{itemize}[widest=(a)]

\item[{\rm (a)}]
$A/B$ separable and $B/C$ separable
$\Longrightarrow$
$A/C$ separable for $f \circ g : C \to A$.

\item[{\rm (b)}]
$A/C$ separable for $f \circ g : C \to A$
$\Longrightarrow$ $A/B$ separable.

\end{itemize}
\end{prop}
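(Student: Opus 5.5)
The plan is to reduce both statements to Theorem~\ref{thm:main3} together with Proposition~\ref{prop:nastasescu}. The key observation is that restriction is compatible with composition: $\Res_{f\circ g} = \Res_g \circ \Res_f$ as functors ${}_A\FMod \to {}_C\FMod$. Indeed, for a left $A$-module $M$, the $C$-action via $f\circ g$ is $c\cdot m = f(g(c))m$, which is exactly the $C$-action obtained by first restricting along $f$ and then along $g$. On morphisms all three functors are the identity on underlying maps.

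For the hypotheses of Theorem~\ref{thm:main3} to apply to each of the three extensions, I need the following.
\begin{itemize}
\item $f$, $g$ and $f\circ g$ are left firm. Here $f\circ g$ is left firm by part (d) of the preceding proposition: $A$ is firm as a left $B$-module, so $\Res_g(\Res_f(A)) = \Res_{f\circ g}(A)$ is firm as a left $C$-module, since $g$ is left firm.
\item $A$ and $B$ are firm rings, which is given.
\item By part (d) again, $\Res_f$ maps ${}_A\FMod$ into ${}_B\FMod$ and $\Res_g$ maps ${}_B\FMod$ into ${}_C\FMod$, so the composition $\Res_g\circ\Res_f$ is defined on the firm module categories and equals $\Res_{f\circ g}$ there.
\end{itemize}

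For (a), assume $A/B$ and $B/C$ are separable. By Theorem~\ref{thm:main3} applied to $f$ (with $A$ firm and $f$ left firm), $\Res_f : {}_A\FMod\to{}_B\FMod$ is separable. By Theorem~\ref{thm:main3} applied to $g$ (with $B$ firm and $g$ left firm), $\Res_g:{}_B\FMod\to{}_C\FMod$ is separable. Proposition~\ref{prop:nastasescu}(a) then gives that $\Res_{f\circ g} = \Res_g\circ\Res_f$ is separable. Finally, Theorem~\ref{thm:main3} applied to $f\circ g$ (with $A$ firm and $f\circ g$ left firm) yields that $A/C$ is separable.

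For (b), assume $A/C$ is separable. Theorem~\ref{thm:main3} gives that $\Res_{f\circ g}=\Res_g\circ\Res_f$ is separable. Proposition~\ref{prop:nastasescu}(b), with $\F=\Res_f$ and $\G=\Res_g$, shows that $\Res_f$ is separable, and one more application of Theorem~\ref{thm:main3} gives that $A/B$ is separable. Note that (b) does not actually use that $B$ is firm, beyond the requirement that $\Res_g$ be defined on ${}_B\FMod$.

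The main point needing care is the verification that $f\circ g$ is left firm and that the equality $\Res_{f\circ g}=\Res_g\circ\Res_f$ holds on the level of the firm categories, so that Proposition~\ref{prop:nastasescu} applies to functors between exactly the categories appearing in Theorem~\ref{thm:main3}. Both follow from part (d) of the preceding proposition as described above, so I expect no genuine obstacle.
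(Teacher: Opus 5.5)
Your proposal is correct and follows the paper's route: the paper deduces both parts directly from Theorem~\ref{thm:main3} and Proposition~\ref{prop:nastasescu}, using $\Res_{f\circ g}=\Res_g\circ\Res_f$ on the firm module categories. You also check explicitly that $f\circ g$ is left firm, by applying part (d) of the restriction proposition for $g$ to the firm left $B$-module $A$; the paper leaves this hypothesis check implicit.
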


\begin{proof}
This follows immediately from 
Theorem \ref{thm:main3} and
Proposition \ref{prop:nastasescu}. 
\end{proof}

Suppose that \(f : B \to A\) is a ring 
homomorphism.  
The induction functor  
$\Ind_f : {}_B \Mod \to {}_A \Mod$
assigns to any left \(B\)-module \(N\) the left \(A\)-module \(A \otimes_B N\),
where \(A\) acts on \(A \otimes_B N\) by left multiplication
and the right \(B\)-action on \(A\) is given via \(f\).

\begin{prop}
Let $f : B \to A$ be a ring homomorphism. 

\begin{itemize}[widest=(a)]

\item[{\rm (a)}] $f$ left unital
$\Longrightarrow$
$\Ind_f : {}_B \Mod \to {}_A \Mod^1$;

\item[{\rm (b)}] $f$ left s-unital 
$\Longrightarrow$
$\Ind_f : {}_B \Mod \to {}_A \SMod$;

\item[{\rm (c)}] $f$ left unitary 
$\Longrightarrow$
$\Ind_f : {}_B \Mod \to {}_A \UMod$;

\item[{\rm (d)}] $f$ right firm 
$\Longrightarrow$
$\Ind_f : {}_B \FMod \to {}_A \FMod$.

\end{itemize}
\end{prop}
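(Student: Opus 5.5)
Throughout, $N$ is a left $B$-module and $A$ acts on $A\otimes_B N$ by $a'(a\otimes n)=a'a\otimes n$. Parts (a)--(c) follow the pattern of the corresponding restriction result. Part (d) uses the right-handed analogue of the section argument in Lemma~\ref{lem:Mfirmifsection}.

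For (a), choose $b\in B$ with $f(b)a=a$ for all $a\in A$. Then $f(b)$ acts as the identity on every generator $a\otimes n$, since $f(b)a\otimes n=a\otimes n$. Hence it acts as the identity on all of $A\otimes_B N$, so $A\otimes_B N$ is unital.

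For (b), take a finite sum $z=\sum_{i=1}^n a_i\otimes n_i$. By Proposition~\ref{prop:sunitalequivalent}, applied to $A$ as an s-unital left $B$-module, there is $b\in B$ with $f(b)a_i=a_i$ for all $i$. Then $f(b)z=z$, so $z\in Az$, and $A\otimes_B N$ is s-unital. The same argument already works for any $z$, without splitting into cases.

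For (c), use $A=f(B)A\subseteq AA\subseteq A$. This gives $A=AA$, so each $a\otimes n$ is a sum of terms $a'a''\otimes n=a'(a''\otimes n)$. Therefore $A(A\otimes_B N)=A\otimes_B N$.

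For (d), "right firm" should mean that $A$, viewed as a right $B$-module via $f$, is firm. So $\mu'\colon A\otimes_B B\to A$, $a\otimes b\mapsto af(b)$, is an isomorphism of $A$-$B$-bimodules. Let $N$ be a firm left $B$-module. By Lemma~\ref{lem:Mfirmifsection}, it suffices to produce a left $A$-linear section of
\[
\mu_{A,A\otimes_B N}\colon A\otimes_A(A\otimes_B N)\to A\otimes_B N .
\]

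The main step is to build this section as the composite
\[
A\otimes_B N\xrightarrow{\id_A\otimes\mu_{B,N}^{-1}}A\otimes_B B\otimes_B N\xrightarrow{(\mu')^{-1}\otimes\id}A\otimes_B B\otimes_B N\xrightarrow{\ \theta\ }A\otimes_A A\otimes_B N .
\]
Here $\theta$ sends $a\otimes b\otimes n$ to $a\otimes f(b)\otimes n$. Each map is left $A$-linear.

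The delicate point is that $\theta$ is well defined, i.e. balanced over the middle $B$. This holds because $a f(b)\otimes_A x$ equals $a\otimes_A f(b)x$ in $A\otimes_A(\cdots)$, so the relation $ab\otimes n=a\otimes bn$ is respected.

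Two options are available, and I would choose between them by checking which composite actually splits $\mu$:
\begin{enumerate}
\item Use only the first two maps. This avoids firmness of $N$, since $(\mu')^{-1}\otimes\id_N$ together with the associativity $A\otimes_B B\otimes_B N\cong(A\otimes_B B)\otimes_B N$ already gives a map $A\otimes_B N\to A\otimes_B B\otimes_B N$.
\item Use firmness of $N$, via $\mu_{B,N}^{-1}$, to pass to the triple tensor and then apply $\theta$.
\end{enumerate}

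With the simplest choice $s=\theta\circ((\mu')^{-1}\otimes\id_N)$, the check is direct. Write $(\mu')^{-1}(a)=\sum_j a_j\otimes b_j$, so that $\sum_j a_jf(b_j)=a$. Then
\[
\mu\bigl(s(a\otimes n)\bigr)=\sum_j a_jf(b_j)\otimes n=a\otimes n .
\]
So $\mu\circ s=\id$, and Lemma~\ref{lem:Mfirmifsection} gives that $A\otimes_B N$ is firm.

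Finally, functoriality of $\Ind_f$ on morphisms is clear in every part. Since ${}_A\FMod$ is a full subcategory of ${}_A\Mod$, this completes (d).
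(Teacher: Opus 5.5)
Your arguments for (a)--(c) match the paper's; you just write out elementwise what the paper states in one line.

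For (d), your final section $s=\theta\circ((\mu')^{-1}\otimes\id_N)$, $a\otimes n\mapsto\sum_j a_j\otimes_A(f(b_j)\otimes n)$, is correct. It is not the paper's route, though. The paper uses exactly your ``option 2'': $\sigma=\theta\circ(\id_A\otimes\mu_{B,N}^{-1})$, $a\otimes n\mapsto\sum_i a\otimes_A(f(b_i)\otimes n_i)$ with $\sum_i b_in_i=n$. So you factor the ring element through $A\otimes_BB$, while the paper factors the module element through $B\otimes_BN$.

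The two routes prove different strengthenings of (d):
\begin{itemize}
\item The paper's argument uses only firmness of $N$ and never uses that $f$ is right firm. It therefore shows that $\Ind_f$ maps ${}_B\FMod$ into ${}_A\FMod$ for every ring homomorphism $f$.
\item Your argument uses only right firmness of $f$ and never uses firmness of $N$. It therefore shows that $A\otimes_BN$ is firm for every left $B$-module $N$, in the same spirit as (a)--(c), where the hypothesis on $f$ alone handles arbitrary $N$.
\end{itemize}

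In a write-up, drop the hedging between options and commit to $s$ directly. Also drop the displayed three-map composite: it does not typecheck, since applying $(\mu')^{-1}$ to the $A$-factor of $A\otimes_BB\otimes_BN$ lands in $A\otimes_BB\otimes_BB\otimes_BN$. Finally, justify $\theta$ cleanly, for instance as the composite of $\id_A\otimes f\otimes\id_N$ with the canonical map $A\otimes_B(A\otimes_BN)\to A\otimes_A(A\otimes_BN)$. As written, you check only one of the two balancing relations.
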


\begin{proof}
Let $N$ be a left $B$-module.
First we show (a), (b) and (c) 
simultaneously. Let $f$ be left
unital (respectively s-unital, unitary). 
Then $A$,
considered as a left $B$-module, is
unital (respectively s-unital, unitary).
Hence, the same conclusion holds for 
$A$ viewed as a left module over itself.
Therefore, the left $A$-module
$A \otimes_B N$ is also unital 
(respectively s-unital, unitary).

(d) Suppose that $f$ is right firm and $N$ 
is firm in ${}_B \FMod$. 
Put $M := A \otimes_B N$.
By Lemma \ref{lem:Mfirmifsection}, 
we need to show that $\mu_{A,M}$ 
has a section $\sigma$ in ${}_A \Mod$. 
Set $\sigma_{B,N} := \mu_{B,N}^{-1}$.
Let $i_M : A \otimes_B M \to A \otimes_A M$ be the 
map $a \otimes_B m \mapsto a \otimes_A m$.
Define $\sigma : M \to A \otimes_A M$ by
$\sigma := i_M \circ (\id_A \otimes f \otimes \id_N) \circ
(\id_A \otimes \sigma_{B,N})$.
Then $\sigma$, being a 
composition of morphisms in ${}_A \Mod$, is 
again a morphism in ${}_A \Mod$.
Now we show that
$\mu_{A,M} \circ \sigma = \id_M$.
Take $a \in A$ and $n \in N$. Take 
$k \in \N$, $b_i \in B$ and $n_i \in N$ 
with $\sigma_{B,N}(n) = \sum_{i=1}^k b_i \otimes n_i$, 
so that $\sum_{i=1}^k b_i n_i = 
(\mu_{B,N} \circ \sigma_{B,N})(n) = n$. Then
\[
\begin{array}{rcl}
(\mu_{A,M} \circ \sigma)( 
 a \otimes n ) 
 &=&  
(\mu_{A,M} \circ i_M \circ 
(\id_A \otimes f \otimes \id_N) \circ 
(\id_A \otimes \sigma_{B,N})) (a \otimes n) 
\\
&=& \sum_{i=1}^k (\mu_{A,M} \circ i_M \circ 
(\id_A \otimes f \otimes \id_N)) (a \otimes b_i \otimes n_i)
\\
&=&
\sum_{i=1}^k (\mu_{A,M} \circ i_M)
( a \otimes f(b_i) \otimes n_i )
= \sum_{i=1}^k  af(b_i) \otimes n_i
\\
&=& \sum_{i=1}^k a \otimes b_i n_i = 
a \otimes \sum_{i=1}^k b_i n_i = a \otimes n,
\end{array}
\]
showing that $\mu_{A,M} \circ \sigma = \id_M$.
\end{proof}

A ring homomorphism $f : B \to A$ is called firm if it 
is both left and right firm.

\begin{thm}
Let $B$ be a firm ring and suppose that 
$f : B \to A$ is a firm 
ring homomorphism. Then the functor
$\Ind_f : {}_B \FMod \to {}_A \FMod$ is 
separable if and only if 
$f$ is a split monomorphism in the category 
of $B$-bimodules. 
\end{thm}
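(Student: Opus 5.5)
The plan is to imitate the classical argument of N{\v a}st{\v a}sescu, Van den Bergh and Van Oystaeyen, with firmness used wherever the unital proof silently uses $1$. Two standing facts are needed throughout. First, for $N \in {}_B\FMod$, the module $A\otimes_B N$ lies in ${}_A\FMod$ by the preceding proposition (right firmness). Second, $\Res_f(A\otimes_B N)$ lies in ${}_B\FMod$ by part (d) of the proposition on $\Res_f$ (left firmness). Since $A$ is right firm over $B$, the map $A\otimes_B B\to A$, $a\otimes b\mapsto af(b)$, is an isomorphism. For $N\in{}_B\FMod$ I will use the left $B$-linear map
\[
\eta_N := (f\otimes \id_N)\circ \mu_{B,N}^{-1} : N \to A\otimes_B N ,
\]
which plays the role of the classical unit $n\mapsto 1\otimes n$.

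\emph{``If'' direction.} Let $E : A\to B$ be a $B$-bimodule map with $E\circ f=\id_B$. For $M,N\in{}_B\FMod$ and $A$-linear $g : A\otimes_B M\to A\otimes_B N$, I define
\[
\R_{M,N}(g) := \mu_{B,N}\circ (E\otimes \id_N)\circ g\circ \eta_M .
\]
Every factor is left $B$-linear, because $E$ is left $B$-linear and $E\otimes\id_N$ is well defined since $E$ is right $B$-linear. To check (SF1), take $g=\id_A\otimes h$ and write $\mu_{B,M}^{-1}(m)=\sum b_i\otimes m_i$. Then
\[
m\mapsto \sum f(b_i)\otimes h(m_i)\mapsto \sum b_i\otimes h(m_i)\mapsto \sum b_i h(m_i)=h\Bigl(\sum b_i m_i\Bigr)=h(m).
\]
For (SF3), I use that $\eta$ is natural in $N$, that $E\otimes\id$ commutes with $\id_A\otimes h$ and with $\id_B\otimes h$, and that $\mu_B$ is natural. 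Consequently, a factor of the form $\Ind_f(h)$ can be pulled through on either side of the composite. Separability then follows from Proposition~\ref{prop:newsep}.

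\emph{``Only if'' direction.} Here the goal is to build a natural retraction of $\eta$ from $\R$. For $N\in{}_B\FMod$, the map
\[
\mu_{A/B}\otimes\id_N : A\otimes_B A\otimes_B N \to A\otimes_B N
\]
is $A$-linear. It has the form $\Ind_f(A\otimes_B N)\to\Ind_f(N)$, and both objects are firm by the facts above. I set
\[
\nu_N := \R_{A\otimes_B N,\,N}(\mu_{A/B}\otimes \id_N) : A\otimes_B N\to N .
\]
An elementwise computation, using $\sum f(b_i)n_i=n$ after writing $\mu_{B,N}^{-1}(n)=\sum b_i\otimes n_i$, gives $(\mu_{A/B}\otimes\id_N)\circ\Ind_f(\eta_N)=\id_{A\otimes_B N}$. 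Then (SF3) and Proposition~\ref{prop:RFid}(a) yield $\nu_N\circ\eta_N=\id_N$. Naturality of $\nu$ in $N$ follows from (SF2) applied to a $B$-linear $h:N\to N'$, because $\mu_{A/B}\otimes\id$ commutes with $\id_A\otimes\id_A\otimes h$.

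Now take $N=B$, which is firm since $B$ is a firm ring, and define $E : A\to B$ as $\nu_B$ composed with the isomorphism $A\cong A\otimes_B B$. Then $E$ is left $B$-linear. For $b\in B$, right multiplication $\rho_b : B\to B$ is a morphism in ${}_B\FMod$, and naturality of $\nu$ with respect to $\rho_b$ translates into right $B$-linearity of $E$. Finally, under $A\otimes_B B\cong A$, the map $\eta_B$ becomes $f$, so $E\circ f=\nu_B\circ\eta_B=\id_B$.

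I expect the main obstacle to be this last bookkeeping. One must verify that the identification $A\otimes_B B\cong A$ really carries $\eta_B$ to $f$ and $\Ind_f(\rho_b)$-type maps to right multiplication by $f(b)$. This is exactly where both right firmness of $f$ and firmness of $B$ are needed, with $\mu_{B,B}^{-1}$ replacing the unit of $B$.
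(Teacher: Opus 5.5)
Your proof is correct and is essentially the paper's. The ``if'' direction uses exactly the same assignment $\R_{M,N}(g)=\mu_{B,N}\circ(E\otimes\id_N)\circ g\circ(f\otimes\id_M)\circ\mu_{B,M}^{-1}$. In the ``only if'' direction, your splitting $E=\nu_B\circ\mu_{A,B}^{-1}$ coincides, by (SF3) applied to $\Ind_f(\mu_{A,B}^{-1})$, with the paper's $\R_{A,B}(\gamma)$ for $\gamma=(\mu_{A/B}\otimes\id_B)\circ(\id_A\otimes\mu_{A,B}^{-1})$. The only difference is packaging: you first build a natural retraction $\nu$ of $\eta$ for every firm $N$ and read off right $B$-linearity from naturality at $\rho_b$, whereas the paper works directly with the object $A$ and the morphism $f$ and gets right linearity from (SF3) applied to right multiplications.
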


\begin{proof}
First we show the ``only if'' statement.
Suppose that $\Ind_f$ is separable.
We wish to show that $f$ is a split monomorphism
in the category of $B$-bimodules.
Set $\sigma_{A,B} := \mu_{A,B}^{-1}$ 
and $\gamma := (\mu_{A/B} \otimes \id_B) \circ 
(\id_A \otimes \sigma_{A,B})$. 
Then $\gamma$ is 
a morphism in ${}_A \Mod_B$. 
Now we show that
\begin{equation}\label{eq:ABgammaAB}
\mu_{A,B} \circ \gamma \circ 
\Ind_f(f) = 
\mu_{A,B} \circ 
\Ind_f(\id_B). 
\end{equation}
Suppose that 
$a \in A$ and $b \in B$.
Take $n \in \N$, $a_i \in A$ and $b_i \in B$
such that $\sigma_{A,B}( f(b) ) = 
\sum_{i=1}^n a_i \otimes b_i$.
Then $\sum_{i=1}^n a_i f(b_i) = 
\mu_{A,B}(\sigma_{A,B}(f(b))) = f(b)$.
Thus,
\[
\begin{array}{rcl}
(\mu_{A,B} \circ \gamma \circ \Ind_f(f))
(a \otimes b) 
&=& 
\sum_{i=1}^n 
(\mu_{A,B} \circ (\mu_{A/B} \otimes \id_B))
(a \otimes a_i \otimes b_i) \\ 
&=&
\sum_{i=1}^n \mu_{A,B}( a a_i \otimes b_i ) 
= \sum_{i=1}^n a a_i f(b_i) \\
&=& 
a f(b) = 
(\mu_{A,B} \circ \id_{A \otimes_B B})(a \otimes b).
\end{array}
\]
This establishes \eqref{eq:ABgammaAB}.
Since $\mu_{A,B}$ is bijective, we get 
$\gamma \circ \Ind_f(f) = 
\Ind_f(\id_B)$.
By (SF3), 
$\R_{A,B}^{\Ind_f}(\gamma) \circ f = \id_B$.
Thus, $f$ splits as an additive map.
Since $\gamma$ is left $A$-linear,
$\R_{A,B}^{\Ind_f}(\gamma)$ is 
left $B$-linear. Now we show that 
$\R_{A,B}^{\Ind_f}(\gamma)$
is right $B$-linear.
To this end, take $b \in B$ and let
$m_b^B : B \to B$ and $m_b^A : A \to A$
denote the left $B$-linear maps given by right multiplication by $b$ and $f(b)$, respectively.
Since $\gamma$ is right $B$-linear, we have
$\gamma \circ \Ind_f(m_b^A)=\Ind_f(m_b^B)\circ \gamma$.
By (SF3), $\R_{A,B}^{\Ind_f}(\gamma)\circ m_b^A
= m_b^B\circ \R_{A,B}^{\Ind_f}(\gamma)$.
Therefore, $\R_{A,B}^{\Ind_f}(\gamma)$ is right $B$-linear.

Now we show the ``if'' statement.
Suppose that $f$ is a split monomorphism in the 
category of $B$-bimodules. Then there is a 
$B$-bimodule map
$s : A \to B$ such that $s \circ f = \id_B$.
Suppose that $M$ and $N$ are firm left $B$-modules.
Define 
$\R_{M,N}^{\Ind_f} : 
\Hom_A(\Ind_f(M),\Ind_f(N)) \to 
\Hom_B(M,N)$
in the following way. 
Take 
$g \in \Hom_A(\Ind_f(M) , \Ind_f(N))$.
Put
\[
\R_{M,N}^{\Ind_f}(g) :=
\mu_{B,N} \circ (s \otimes \id_N) 
\circ g \circ 
(f \otimes \id_M) \circ \mu_{B,M}^{-1}.
\]
Then $\R_{M,N}^{\Ind_f}(g)$
is a morphism in ${}_B \Mod$.
Now we show (SF1).
Take $m \in M$,
$k \in \N$, $b_i \in B$ and $m_i \in M$
with $\sum_{i=1}^k b_i m_i = m$. 
Let $g = \Ind_f(g')$ for some 
$g' \in \Hom_B(M,N)$. Then 
\[
\begin{array}{rcl}
\R_{M,N}^{\Ind_f}(g)(m) 
&=& 
\sum_{i=1}^k (\mu_{B,N} \circ
(s \otimes \id_N) \circ g)
(f(b_i) \otimes m_i) \\
&=&
\sum_{i=1}^k
(\mu_{B,N} \circ (s \otimes \id_N)
\circ (\id_A \otimes g')) 
(f(b_i) \otimes m_i) \\ 
&=& 
\sum_{i=1}^k
(\mu_{B,N} \circ (s \otimes \id_N))
(f(b_i) \otimes g'(m_i)) \\ 
&=& 
\sum_{i=1}^k
\mu_{B,N} (b_i \otimes g'(m_i)) \\
&=& \sum_{i=1}^k b_i g'(m_i) 
= g' \left( \sum_{i=1}^k b_i m_i \right)
= g'(m).
\end{array}
\]
Now we show (SF3).
Suppose that $M,N,P$ are modules in ${}_B \FMod$. 
Take $g \in \Hom_A(A \otimes_B N,
A \otimes_B P)$ and 
$h \in \Hom_A(A \otimes_B M,
A \otimes_B N)$. 
We consider two cases.

Case 1: $g \in \Ind_f(\Hom_B(N,P))$
that is 
$g = \id_A \otimes g'$ for 
some $g' \in \Hom_B(N,P)$. Then
\[
(s \otimes \id_P) \circ g \circ 
(f \otimes \id_N) \circ 
(s \otimes \id_N) = 
(\id_B \otimes g') \circ 
(s \otimes \id_N) = 
(s \otimes \id_P) \circ g.
\]
Therefore,
{\small \[
\begin{array}{cl}
 & \R_{N,P}^{\Ind_f}(g) \circ
\R_{M,N}^{\Ind_f}(h) \\ 
= & \mu_{B,P} \circ (s \otimes \id_P)
\circ g \circ (f \otimes \id_N)
\circ \mu_{B,N}^{-1} \, \circ 
 \mu_{B,N} \circ (s \otimes \id_N)
\circ h \circ (f \otimes \id_M) 
\circ \mu_{B,M}^{-1} \\
=& \mu_{B,P} \circ (s \otimes \id_P)
\circ g \circ (f \otimes \id_N) \, \circ   
(s \otimes \id_N) \circ h \circ 
(f \otimes \id_M) \circ \mu_{B,M}^{-1} \\
=& \mu_{B,P} \circ (s \otimes \id_P) 
\circ g \circ h \circ 
(f \otimes \id_M) \circ \mu_{B,M}^{-1} 
= \R_{M,P}^{\Ind_f}(g \circ h).
\end{array}
\]}

Case 2: $h \in \Ind_f(\Hom_B(M,N))$
that is $h = \id_A \otimes h'$ for 
some $h' \in \Hom_B(M,N)$. Then
\[
(f \otimes \id_N) \circ
(s \otimes \id_N) \circ h \circ 
(f \otimes \id_M)  = 
(\id_A \otimes h') \circ 
(s \otimes \id_M) = 
h \circ (f \otimes \id_M).
\]
Therefore,
{\small \[
\begin{array}{cl}
 & \R_{N,P}^{\Ind_f}(g) \circ
\R_{M,N}^{\Ind_f}(h) \\
=& \mu_{B,P} \circ (s \otimes \id_P)
\circ g \circ (f \otimes \id_N)
\circ \mu_{B,N}^{-1} \, \circ 
\mu_{B,N} \circ (s \otimes \id_N)
\circ h \circ (f \otimes \id_M) 
\circ \mu_{B,M}^{-1} \\
=& \mu_{B,P}  \circ  (s \otimes \id_P)
\circ g \circ 
(f \otimes \id_N) \, \circ 
(s \otimes \id_N)  \circ h \circ
(f \otimes \id_M)  \circ \mu_{B,M}^{-1} \\
=& \mu_{B,P} \circ (s \otimes \id_P) 
\circ g \circ h \circ 
(f \otimes \id_M) \circ \mu_{B,M}^{-1} 
= \R_{M,P}^{\Ind_f}(g \circ h).
\end{array}
\]}
This establishes (SF3).
By Proposition \ref{prop:newsep},
$\Ind_f$ is separable.
\end{proof}

\section{Semisimple modules}\label{sec:semisimplemodules}

From now on, $A$ denotes 
a fixed left s-unital ring, and we 
consider the category ${}_A \FMod$
of firm left $A$-modules. Recall from 
Proposition~\ref{prop:sunitalimpliesfirm} that 
\begin{equation}\label{eq:threeequal}
{}_A \FMod = 
{}_A \UMod = {}_A \SMod.
\end{equation}

Suppose that $M$ and $P$ are modules 
in ${}_A \FMod$
with $P \subseteq M$. 
Then $P$ is called 
a submodule of $M$ in ${}_A \FMod$. 
By \eqref{eq:threeequal}, the  
quotient module $M/P$ in ${}_A \Mod$  
also belongs to ${}_A \FMod$.
For a morphism $f : M \to N$ 
in ${}_A \FMod$, we define its kernel 
$\Ker(f) := \{ m \in M \mid f(m) = 0 \}$
and its image $\Ima(f) := \{ f(m) \mid m \in M \}$.
By \eqref{eq:threeequal},
$\Ker(f)$ is a submodule of $M$ 
in ${}_A\FMod$ 
and $\Ima(f)$ is a submodule of $N$ in ${}_A\FMod$.

\begin{prop}\label{prop:monoisinj}
Suppose that $f : M \to N$ is a morphism 
in ${}_A \FMod$.

\begin{itemize}[widest=(a)]

\item[{\rm (a)}] $f$ is a monomorphism
$\Longleftrightarrow$ $f$ is injective;

\item[{\rm (b)}] $f$ is an epimorphism
$\Longleftrightarrow$ $f$ is surjective;

\item[{\rm (c)}] $f$ is an isomorphism
$\Longleftrightarrow$ $f$ is bijective.

\end{itemize}
\end{prop}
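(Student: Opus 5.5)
The ``$\Leftarrow$'' directions in (a) and (b) hold in any concrete category whose morphisms are functions, so they are immediate: an injective map can be cancelled on the left, and a surjective map can be cancelled on the right. The real work is the ``$\Rightarrow$'' directions. Here we must build test objects inside ${}_A\FMod$, and this is where \eqref{eq:threeequal} is used: kernels, images and quotients of modules in ${}_A\FMod$ stay in ${}_A\FMod$.

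For (a), suppose $f : M \to N$ is a monomorphism in ${}_A\FMod$. Put $K := \Ker(f)$, which lies in ${}_A\FMod$ by the remarks preceding the proposition. Let $\iota : K \to M$ be the inclusion and $0 : K \to M$ the zero map; both are morphisms in ${}_A\FMod$. Since $f\iota = 0 = f0$, the monomorphism property gives $\iota = 0$, so $K = 0$ and $f$ is injective. Note that we cannot test with the free module $A$ or with $\mathbb{Z}$-style generators, because these need not be available. The kernel argument avoids this, and it works only because $K$ is unitary, which holds by \eqref{eq:threeequal}.

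For (b), suppose $f$ is an epimorphism. Take the quotient $N/\Ima(f)$, which lies in ${}_A\FMod$ by \eqref{eq:threeequal}. Let $\pi : N \to N/\Ima(f)$ be the canonical projection and $0$ the zero map. Then $\pi f = 0 = 0 f$, so $\pi = 0$, which gives $N = \Ima(f)$. Hence $f$ is surjective.

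For (c), if $f$ is bijective then its set-theoretic inverse is additive and $A$-linear, so it is a morphism in the full subcategory ${}_A\FMod$, and $f$ is an isomorphism. Conversely, an isomorphism is both a monomorphism and an epimorphism, so by (a) and (b) it is bijective. The only step needing care is checking that the test objects $\Ker(f)$ and $N/\Ima(f)$ genuinely lie in ${}_A\FMod$, and this is exactly what the standing assumption that $A$ is left s-unital provides.
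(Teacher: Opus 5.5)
Your proof is correct and follows essentially the same route as the paper: you test a monomorphism against the inclusion and the zero map $\Ker(f) \to M$, and an epimorphism against the canonical projection and the zero map $N \to N/\Ima(f)$, with \eqref{eq:threeequal} guaranteeing that these test objects lie in ${}_A\FMod$. Your treatment of (c) is slightly more careful than the paper's one-line appeal to (a) and (b), since you explicitly note that the set-theoretic inverse of a bijective $A$-linear map is again $A$-linear.
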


\begin{proof}
(a) The implication $(\Leftarrow)$ is trivial.
Now we show $(\Rightarrow)$. 
Suppoe that $f$ is a monomorphism. 
Define $g,h : \Ker(f) \to M$ by 
$g(m)=m$ and $h(m)=0$ for $m \in \Ker(f)$.
Then $f g = 0 = fh$. 
Since $f$ is a monomorphism, $g=h$. Hence,
$\Ker(f) = g( \Ker(f) ) = h( \Ker(f) ) = \{ 0 \}$.

(b) The implication $(\Leftarrow)$ is trivial.
Now we show $(\Rightarrow)$. 
Let $f$ be
an epimorphism. Define $g,h : N \to N/\Ima(f)$
by letting $g$ be the quotient map and $h$
the zero map. Then $gf = 0 = hf$. Since $f$ is 
an epimorphism, $g = h$, so that 
$\Ima(f)=N$. 

(c) This follows from (a) and (b).
\end{proof}

The next result shows that subobjects and 
quotient objects
in ${}_A\FMod$ correspond 
to submodules
and quotient modules in ${}_A\FMod$,
respectively.

\begin{prop}\label{prop:bijections}
Let $M \in  {}_A \FMod$. The following maps
are bijections:
\[
\begin{array}{c}
\gamma : \Sub_{{}_A\FMod}(M) \longrightarrow 
\{ \text{submodules of } M \},
\quad 
\gamma([f:P \to M]) = f(P), \\[5pt]
\delta : \Quot_{ {}_A\FMod }(M) \longrightarrow 
\{ \text{quotient modules of } M \},
\quad 
\delta([q:M \to Q]) = Q.
\end{array}
\]
\end{prop}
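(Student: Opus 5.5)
The plan is to check, for each of $\gamma$ and $\delta$, that it is well defined, injective and surjective. Throughout I use Proposition~\ref{prop:monoisinj} to replace monomorphisms and epimorphisms by injective and surjective module maps, together with \eqref{eq:threeequal} to stay inside ${}_A\FMod$ when forming submodules and quotients.

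For $\gamma$, take a monomorphism $f : P \to M$. By Proposition~\ref{prop:monoisinj}(a) it is injective, and $f(P)=\Ima(f)$ is a submodule of $M$ in ${}_A\FMod$. If $f \sim_M g$ via an isomorphism $h : P \to Q$ with $f = gh$, then $f(P) = g(h(P)) = g(Q)$, since $h$ is bijective. Hence $\gamma$ is well defined.

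For injectivity, suppose $f(P) = g(Q)$ with $f$ and $g$ injective. Define $h := g^{-1}\circ f : P \to Q$, where $g^{-1}$ is the inverse of the corestriction $Q \to g(Q)$. This is an $A$-linear bijection, so by Proposition~\ref{prop:monoisinj}(c) it is an isomorphism in ${}_A\FMod$. Since $f = gh$, we get $f \sim_M g$. Surjectivity is immediate: a submodule $P \subseteq M$ lies in ${}_A\FMod$ by \eqref{eq:threeequal}, so the inclusion $P \to M$ is an injective morphism there, hence a monomorphism, and $\gamma$ sends its class to $P$.

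For $\delta$, the phrase ``quotient modules of $M$'' has to be read as modules $M/K$ for $K$ a submodule. The cleanest choice is to define $\delta([q]) := M/\Ker(q)$, which is how I read the statement's $Q$ up to the canonical identification; I would say this explicitly. Then $\delta$ is well defined, because $q \approx_M q'$ via an isomorphism $h$ with $q = hq'$ gives $\Ker(q) = \Ker(q')$, as $h$ is injective.

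For injectivity of $\delta$, suppose $q : M \to Q$ and $q' : M \to Q'$ are surjective with the same kernel $K$. The first isomorphism theorem gives bijective $A$-linear maps $M/K \to Q$ and $M/K \to Q'$. Composing one with the inverse of the other yields a bijection $h : Q' \to Q$ with $q = hq'$, which is an isomorphism by Proposition~\ref{prop:monoisinj}(c). For surjectivity, the canonical projection $M \to M/K$ is a surjective morphism in ${}_A\FMod$ by \eqref{eq:threeequal}, hence an epimorphism.

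The only real subtlety is this interpretation of $\delta$'s target, i.e.\ identifying a quotient object with the quotient module by the kernel. The rest of the argument is routine once Proposition~\ref{prop:monoisinj} is available.
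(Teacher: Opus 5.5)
Your proof is correct, and it is more self-contained and more careful than the paper's.

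For $\gamma$, the paper gives no argument and cites an external result (V\"{a}ljako, Thm.~4.2). You instead derive the bijection directly from Proposition~\ref{prop:monoisinj} by constructing the comparison map $g^{-1}\circ f$. Part (c) is not even needed there, since a bijective $A$-linear map automatically has an $A$-linear inverse.

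For $\delta$, the paper sets $\delta([q])=Q$. It argues injectivity by saying that from $Q\cong Q'$ one may choose an isomorphism $\varphi$ with $q'=\varphi\circ q$. That step fails, because an arbitrary isomorphism need not intertwine $q$ and $q'$. Concretely, for a simple module $S$ in ${}_A\FMod$, the two coordinate projections $S\oplus S\to S$ have the same codomain but different kernels, so they represent distinct quotient objects. Hence, with $\delta$ read as ``isomorphism class of the codomain'', the map is not injective.

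Your reading $\delta([q]):=M/\Ker(q)$ is therefore not just the cleanest choice; it is the one that makes the statement true. Your two arguments are exactly what is needed:
\begin{itemize}
\item kernels are unchanged under post-composition with an isomorphism, which gives well-definedness;
\item the first isomorphism theorem produces the intertwining isomorphism $\bar q\circ(\bar q')^{-1}$, which gives injectivity.
\end{itemize}
This is also the reading the paper implicitly relies on later. In its characterization of simple modules, it identifies quotient modules with $M/N$ via the anti-isomorphism $N\mapsto M/N$.
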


\begin{proof}
The claim for $\gamma$ follows from 
\cite[Thm. 4.2]{valjako2020}.
Now we show the part for $\delta$.

If $[q]=[q']$ in $\Quot(M)$, then $q'=\varphi q$ 
for some isomorphism $\varphi : Q \to Q'$, so 
$Q\cong Q'$, showing that $\delta$ is well defined. 
If $\delta([q])=\delta([q'])$, then $Q \cong Q'$ 
and choosing an isomorphism $\varphi:Q\to Q'$ yields $q'=\varphi\circ q$, so $[q]=[q']$, proving injectivity. For any quotient module $M/N$, the canonical map 
$p:M\to M/N$ is an epimorphism and $\delta([p])=M/N$,
proving surjectivity.
\end{proof}

Let $M$ be a module in ${}_A \FMod$.
We say that $M$ is simple 
in ${}_A\FMod$
if $M$ is nonzero, and $M$ has no 
submodules in ${}_A \FMod$
except the zero module and itself.

\begin{prop}
Let $M \in {}_A \FMod$.
The following assertions are equivalent:
\begin{itemize}[widest=(iii)]

\item[{\rm (i)}] $M$ is simple 
in ${}_A\FMod$;

\item[{\rm (ii)}] $M$ is subobject simple
in ${}_A\FMod$;

\item[{\rm (iii)}] $M$ is quotient simple
in ${}_A\FMod$.

\end{itemize}
\end{prop}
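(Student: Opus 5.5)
The plan is to route everything through the bijections $\gamma$ and $\delta$ of Proposition~\ref{prop:bijections}. These translate subobject and quotient counts into counts of submodules and quotient modules of $M$ in ${}_A\FMod$. Note first that ${}_A\FMod$ is pointed with zero object the zero module, which is trivially firm. Hence Lemma~\ref{lem:pointed} applies: $M$ is subobject trivial, or quotient trivial, if and only if $M \cong 0$, i.e.\ $M = 0$.

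(i)$\Leftrightarrow$(ii). By $\gamma$, $|\Sub_{{}_A\FMod}(M)|$ equals the number of submodules of $M$ in ${}_A\FMod$. Every submodule of a firm module over the left s-unital ring $A$ is again firm: it is s-unital because $M$ is, and we then use \eqref{eq:threeequal}. Hence we may count ordinary submodules. Now $M$ is simple exactly when $M \neq 0$ and its only submodules are $0$ and $M$. Since $0 \neq M$, this means there are exactly two submodules, i.e.\ $|\Sub(M)|=2$. Conversely, if $|\Sub(M)|=2$, then $0 \neq M$, because otherwise there would be only one submodule. Since $0$ and $M$ are always submodules, they are then the only ones, so $M$ is simple.

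(i)$\Leftrightarrow$(iii). By $\delta$, $|\Quot_{{}_A\FMod}(M)|$ equals the number of quotient modules of $M$ up to isomorphism, since $\delta$ identifies classes with isomorphism types of codomains. This is the main obstacle. Non-isomorphic submodules $N$ could in principle give isomorphic quotients $M/N$, and distinct $N$ can give isomorphic $M/N$, so quotient counts need not match submodule counts directly. I would therefore not argue only through cardinalities. Instead, for (i)$\Rightarrow$(iii), I would note that if $M$ is simple, every epimorphism $q : M \to Q$ is surjective by Proposition~\ref{prop:monoisinj}. Its kernel is a submodule, hence $0$ or $M$. So $q$ is either bijective, and then $[q]=[\id_M]$ by Proposition~\ref{prop:monoisinj}(c), or zero with $Q=0$, and then $[q]=[0_{M,Z}]$. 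These two classes are distinct because $M\neq 0$ (Lemma~\ref{lem:pointed}), so $|\Quot(M)|=2$.

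For (iii)$\Rightarrow$(i), suppose $|\Quot(M)|=2$. Then $M\neq0$ by Lemma~\ref{lem:pointed}(vi), so the classes are exactly $[\id_M]$ and $[0_{M,Z}]$. Let $N$ be a submodule and $p:M\to M/N$ the projection, which is an epimorphism by Proposition~\ref{prop:monoisinj}. If $[p]=[0_{M,Z}]$, then $M/N\cong 0$, so $N=M$. If $[p]=[\id_M]$, then $p=\varphi\circ\id_M$ for some isomorphism $\varphi$, so $p$ is bijective and $N=0$. Hence $M$ is simple.

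Working with the explicit representatives $[\id_M]$ and $[0_{M,Z}]$ in this way, rather than counting isomorphism types of quotients, avoids the subtlety about $\delta$.
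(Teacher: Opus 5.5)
Your proof is correct. For (i)$\Leftrightarrow$(ii) it matches the paper: you count subobjects via $\gamma$. For (i)$\Leftrightarrow$(iii) you take a different route.

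The paper counts again. Via $\delta$ it asserts that $|\Quot_{{}_A\FMod}(M)|$ equals the number of quotient modules of $M$, and then uses the anti-isomorphism $N\mapsto M/N$ between submodules and quotient modules. You instead identify the quotient objects of a simple module directly: $\Ker(q)\in\{0,M\}$ forces $[q]=[\id_M]$ or $[q]=[0_{M,Z}]$. Conversely, you test each submodule $N$ with the projection $M\to M/N$.

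The paper's version is shorter and treats (ii) and (iii) symmetrically. However, it only works if quotient objects are counted by \emph{kernels}, that is, via the bijection $[q]\mapsto\Ker(q)$ onto the submodules of $M$. Read literally as ``class $\mapsto$ codomain up to isomorphism'', $\delta$ is not injective. Its injectivity argument chooses an arbitrary isomorphism $\varphi:Q\to Q'$, which does not in general satisfy $q'=\varphi\circ q$.

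Your caution is therefore justified. Over $\mathbb{Z}$, where all modules are firm, the Pr\"ufer group $\mathbb{Z}(p^\infty)$ has only two quotient modules up to isomorphism, namely $0$ and itself. Yet it has infinitely many quotient objects, one for each subgroup. Under the isomorphism-type count it would be ``quotient simple'' without being simple. Your argument uses only Proposition~\ref{prop:monoisinj} and Lemma~\ref{lem:pointed}, so it is self-contained and unaffected by this problem.

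One thing to fix: you remark that, by $\delta$, $|\Quot_{{}_A\FMod}(M)|$ equals the number of isomorphism types of quotient modules. That is exactly the false reading. You never use it, so either drop it or replace it with the kernel bijection.
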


\begin{proof}
By Proposition~\ref{prop:bijections}, 
categorical subobjects of $M$ are in bijection with 
submodules of $M$. Likewise, categorical quotient 
objects of $M$ are in bijection with quotient modules 
of $M$. Therefore $\Sub_{{}_A\FMod(M)}$ has exactly 
two elements if and only if $M$ has exactly two submodules, 
and $\Quot_{{}_A\FMod(M)}$ has exactly two elements 
if and only if $M$ has exactly two quotient modules, 
and these conditions are equivalent since the lattice 
of submodules and the lattice of quotient modules of $M$ 
are anti-isomorphic via $N \mapsto M/N$. 
\end{proof}

\begin{lem}\label{lem:simplesubmodule}
Suppose that $M$ is a subobject semisimple
module in ${}_A \FMod$ and $N$ is a nonzero 
submodule of $M$. Then $N$ contains a 
simple submodule.
\end{lem}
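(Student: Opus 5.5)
Since $N$ is nonzero, we pick $0 \neq x \in N$. Because $A$ is left s-unital and $N \in {}_A\FMod = {}_A\SMod$ by \eqref{eq:threeequal}, we have $x \in Ax$, so the cyclic submodule $Ax \subseteq N$ is nonzero. Moreover, $Ax$ is unitary: by Proposition~\ref{prop:sunitalequivalent} there is $e \in A$ with $ex = x$, so $Ax = A(ex) = (Ae)x$. For any $a \in A$, choosing $b$ with $ba = a$ gives $ax = b(ax) \in A(Ax)$. Hence $Ax$ lies in ${}_A\FMod$, and it suffices to find a simple submodule of $Ax$.

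The first step is a Zorn's lemma argument. We consider the family of submodules $L$ of $Ax$ with $x \notin L$. It is nonempty, since it contains $0$, and every chain has its union as an upper bound, which is a submodule not containing $x$; submodules of modules over a left s-unital ring are automatically firm, because they are s-unital. We take a maximal element $L$. Then every submodule of $Ax$ strictly containing $L$ contains $x$, hence contains $Ax$, since it contains $x$ and is a submodule. So $L$ is a maximal submodule of $Ax$, and $Ax/L$ is simple. Here the s-unital property is what guarantees that $Ax$ is the smallest submodule containing $x$. This is the step I expect to need the most care: in the nonunital setting the submodule generated by $x$ is $\mathbb{Z}x + Ax$, and we use $x \in Ax$ to collapse it to $Ax$.

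The second step transfers the complement property. By Proposition~\ref{prop:bijections} and Proposition~\ref{prop:monoisinj}, subobject semisimplicity of $M$ says that every submodule inclusion $P \hookrightarrow M$ has an $A$-linear retraction, so every submodule of $M$ is a direct summand. We apply this to $L \subseteq M$ and obtain a retraction $r : M \to L$. Restricting $r$ to $Ax$ gives a retraction of $L \hookrightarrow Ax$. Therefore $Ax = L \oplus S$ with $S = \ker(r|_{Ax})$, a submodule which again lies in ${}_A\FMod$.

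Finally, $S \cong Ax/L$ is simple, so $S$ is a simple submodule of $Ax \subseteq N$. This completes the plan.
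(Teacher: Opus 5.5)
Your proof is correct, but it obtains simplicity by a different mechanism than the paper.

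\textbf{The paper's route.} The paper never passes to a cyclic submodule. It takes $P$ maximal among submodules of $N$ not containing $n$ and splits $N = P \oplus Q$ with $Q = N \cap P'$. It then proves $Q$ simple by a second use of subobject semisimplicity. Any nonzero $Q' \subseteq Q$ gives $Q = Q' \oplus Q''$. If $Q'' \neq 0$, then both $P \oplus Q'$ and $P \oplus Q''$ strictly contain $P$, hence both contain $n$, which forces $n \in P$. This extra step is needed because $N$ need not be generated by $n$, so $P$ is in general not a maximal submodule of $N$.

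\textbf{Your route.} You first shrink $N$ to $Ax$. Because $x \in Ax$ by s-unitality, $Ax$ is the smallest submodule containing $x$, and it lies in ${}_A\FMod$. Your $L$ is therefore a genuine maximal submodule. Simplicity of the complement $S \cong Ax/L$ is then immediate, and complementation is used only once.

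\textbf{Comparison.} Your argument is the classical textbook one and is somewhat more transparent. Its cost is that it relies on s-unitality to collapse $\mathbb{Z}x + Ax$ to $Ax$ and keep it inside the category. The paper's argument never needs the submodule generated by $n$; it uses only Zorn's lemma, complements and the modular law. Under the standing left s-unital hypothesis of this section, both are fully valid.

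\textbf{A small precision point.} Unions of chains and $\ker(r|_{Ax})$ stay in ${}_A\FMod$ because submodules of s-unital modules are s-unital (s-unitality is a pointwise condition). It is not true that submodules of arbitrary modules over a left s-unital ring are s-unital, which is how your sentence currently reads.
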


\begin{proof}
Take a nonzero $n \in N$. By Zorn's lemma, there is 
a maximal submodule $P$ of $N$ with $n \notin P$.
Since $P \subseteq M$ and $M$ is subobject semisimple in ${}_A\FMod$, the inclusion $P \hookrightarrow M$ splits. Hence
$M = P \oplus P'$
for some submodule $P'$ of $M$. Put
$Q := N \cap P'$.
Then $N = P \oplus Q$.
Since $n \in N \setminus P$, we have $Q \neq 0$.
We claim that $Q$ is simple. Let $\{ 0 \} \neq Q' \subseteq Q$ 
be a nonzero submodule. Since $Q' \subseteq M$ and $M$ is subobject semisimple, the inclusion $Q' \hookrightarrow M$ splits. In particular, $Q = Q' \oplus Q''$
for some submodule $Q''$ of $Q$. Hence
$N = P \oplus Q' \oplus Q''$.
If $Q'' \neq 0$, then both $P \oplus Q'$ and $P \oplus Q''$
properly contain $P$, so by maximality of $P$ 
both contain $n$. Thus
$n \in (P \oplus Q') \cap (P \oplus Q'') = P$,
a contradiction. Therefore $Q''=0$, so $Q'=Q$. 
Thus $Q$ is simple.
\end{proof}

Let $M$ be a module in ${}_A \FMod$.
We say that $M$ is semisimple 
in ${}_A\FMod$
if $M$ is the direct sum of simple 
submodules in ${}_A\FMod$.

\begin{prop}\label{prop:semisimpleequiv}
Let $M \in {}_A \FMod$. 
The following assertions are equivalent:
\begin{itemize}[widest=(iii)]

\item[{\rm (i)}] $M$ is semisimple 
in ${}_A\FMod$;

\item[{\rm (ii)}] $M$ is quotient semisimple
in ${}_A\FMod$;

\item[{\rm (iii)}] $M$ is subobject 
semisimple in ${}_A\FMod$.

\end{itemize}
\end{prop}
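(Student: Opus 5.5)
We plan to prove the cycle (i)$\Rightarrow$(ii)$\Rightarrow$(iii)$\Rightarrow$(i). Throughout we use Proposition~\ref{prop:monoisinj}: monomorphisms and epimorphisms in ${}_A\FMod$ are exactly the injective and surjective $A$-linear maps. We also use \eqref{eq:threeequal}: submodules, quotients, kernels, images and direct sums of firm modules stay in ${}_A\FMod$, because unitarity is preserved under all of these operations.

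For (i)$\Rightarrow$(ii), write $M=\oplus_{i\in I}S_i$ with each $S_i$ simple, and let $e:M\to P$ be an epimorphism with kernel $K$. We first show the standard fact that $K$ has a complement of the form $\oplus_{j\in J}S_j$. By Zorn's lemma, pick $J\subseteq I$ maximal such that the sum $K+\sum_{j\in J}S_j$ is direct. For each $i$, the intersection $S_i\cap(K\oplus\oplus_{j\in J}S_j)$ is a submodule of the simple module $S_i$. It is nonzero by maximality of $J$, so it equals $S_i$. Hence $M=K\oplus C$ with $C=\oplus_{j\in J}S_j$. The map $e|_C:C\to P$ is then bijective, hence an isomorphism in ${}_A\FMod$ by Proposition~\ref{prop:monoisinj}(c). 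Composing its inverse with the inclusion $C\hookrightarrow M$ gives the required section of $e$.

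For (ii)$\Rightarrow$(iii), let $m:P\to M$ be a monomorphism, which we regard as the inclusion of a submodule, and let $q:M\to M/P$ be the quotient map. This is an epimorphism in ${}_A\FMod$, so by (ii) it has a section $s$. Then $M=P\oplus s(M/P)$: indeed $q\circ(\id_M-sq)=0$, so $\id_M-sq$ takes values in $P$. This map, viewed as $M\to P$, is a retraction of $m$.

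For (iii)$\Rightarrow$(i), which we expect to be the main step, we first note that (iii) passes to submodules. If $N\subseteq M$ and $P\subseteq N$, then a retraction $r:M\to P$ of $P\hookrightarrow M$ restricts to a retraction $N\to P$. Let $\Sigma$ be the sum of all simple submodules of $M$. Using Zorn's lemma, we choose a maximal family of simple submodules whose sum is direct, and we claim this direct sum $D$ equals $\Sigma$. If some simple $S$ were not contained in $D$, then $S\cap D=0$ by simplicity, contradicting maximality. So $D=\Sigma$. Now suppose $\Sigma\neq M$. By (iii) there is a decomposition $M=\Sigma\oplus\Sigma'$ with $\Sigma'\neq0$. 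By Lemma~\ref{lem:simplesubmodule}, $\Sigma'$ contains a simple submodule $T$. But then $T\subseteq\Sigma\cap\Sigma'=0$, a contradiction. Therefore $M=\Sigma=D$ is a direct sum of simple submodules, which is (i).

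The only delicate points in all three steps are categorical. One must check that the morphisms built above (restrictions, inverses, the map $\id_M-sq$) lie in ${}_A\FMod$. This is immediate because ${}_A\FMod$ is a full subcategory closed under the relevant constructions.
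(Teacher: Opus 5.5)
Your proposal is correct and follows the paper's proof essentially step for step. It uses the same cycle (i)$\Rightarrow$(ii)$\Rightarrow$(iii)$\Rightarrow$(i): a Zorn's lemma complement for the kernel, the retraction $\id_M - sq$, and Lemma~\ref{lem:simplesubmodule} applied to a complement of a maximal direct sum of simple submodules; the only differences are cosmetic (running Zorn over subfamilies $J\subseteq I$ avoids the paper's case split on $\Ker(q)=M$, and you phrase the last step via the sum $\Sigma$ of all simple submodules), and your remark that (iii) passes to submodules is never actually used.
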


\begin{proof}
(i)$\Rightarrow$(ii):
Let $q : M \to N$ be an epimorphism. We consider 
two cases. 

Case 1: $\Ker(q)=M$. Then $N = \{ 0 \}$.
Let $s : N \to M$ be the zero map. 
Then $q \circ s = \id_N$.

Case 2: $\Ker(q) \subsetneq M$.
Since $M$ is semisimple, $M=\oplus_{i\in I} M_i$ 
for some simple submodules $M_i$ of $M$.
Let $S$ be the subset of nonzero submodules $L$
of $M$ such that $\Ker(q) \cap L = \{ 0 \}$.
Then $S$ is nonempty. Indeed,
by the assumptions, there is $i \in I$ with
$M_i \not \subseteq \Ker(q)$, so that 
$M_i \cap \Ker(q) = \{ 0 \}$, by simplicity of $M_i$.
The set $S$, ordered by inclusion, is inductive,
since the union of a chain in $S$ again belongs to $S$.
Thus, by Zorn's lemma, it has a maximal element $P$.
Seeking a contradiction, suppose that 
$\Ker(q) + P \subsetneq M$. Then there is $j \in I$
with $M_j \not \subseteq \Ker(q) + P$ so that 
$M_j \cap (\Ker(q) + P) = \{ 0 \}$ by simplicity 
of $M_j$. But then $P \subsetneq P + M_j$ and 
$\Ker(q) \cap (P + M_j) = \{ 0 \}$ which violates 
the maximality of $P$, which is a contradiction.
Therefore, $M = \Ker(q) \oplus P$.
The restriction $q|_P : P \to N$ is injective since 
$P \cap \Ker(q) = \{ 0 \}$, and surjective since 
$q(P) = q(\Ker(q)) + q(P) = q( \Ker(q) + P ) =
q(M) = N$, hence an isomorphism. 
Letting $s : N \to M$ be the composite of $(q|_P)^{-1}$ with the inclusion $P \hookrightarrow M$ yields 
$q \circ s=\id_N$. 
Hence, $M$ is quotient semisimple.

(ii)$\Rightarrow$(iii):
Suppose that $M$ is quotient
semisimple. Let $i:K \to M$ be a monomorphism.  
Consider the cokernel $p : M \to M/K$, which is an 
epimorphism. By hypothesis there is  
$s : M/K \to M$ with 
$p \circ s=\mathrm{id}_{M/K}$. 
For any $m\in M$ we then have 
$m- s(p(m)) \in K$, so we 
may define a map $r:M\to K$ by 
$r(m)=m-s(p(m))$. 
Since $p(m - s(p(m))) = p(m)-p(s(p(m))) = 
p(m) - p(m) = 0$,
we have $r(m) \in K$ for all $m \in M$.
Thus $r : M \to K$ is well defined. It is clearly 
$A$-linear. Moreover, for $k\in K$ we have 
$p(k)=0$, hence $r(k)=k$, so that 
$r \circ i=\mathrm{id}_K$.
Hence, $M$ is subobject semisimple.

(iii)$\Rightarrow$(i): 
Let $M$ be subobject semisimple.
If $M = \{0\}$, then $M$ is the direct sum of an 
empty family of simple submodules.
Assume now that $M \neq \{ 0 \}$.
By Lemma~\ref{lem:simplesubmodule} and Zorn's lemma,
there exists a nonempty family $\{N_i\}_{i\in I}$ of simple 
submodules such that $N = \oplus_{i\in I} N_i$
is maximal among direct sums of simple submodules of $M$.
Since $N\subseteq M$ and $M$ is subobject semisimple, we have
$M = N\oplus P$ for some submodule $P$ of $M$.
Suppose that $P \neq \{ 0 \}$.
By Lemma~\ref{lem:simplesubmodule}, $P$ contains a simple 
submodule $P'$.
Since $P'\subseteq M$ and $M$ is subobject semisimple, the inclusion $P'\hookrightarrow M$ splits.
Hence $M = P'\oplus X$ for some submodule $X$ of $M$.
Thus, we get
$P = P'\oplus (P\cap X)$.
Hence $M = N \oplus P'\oplus (P\cap X)$,
contradicting the maximality of the family $\{N_i\}_{i\in I}$.
Therefore $P = \{0\}$, so $M=\bigoplus_{i\in I} N_i$.
Hence $M$ is semisimple.
\end{proof}

\begin{prop}\label{prop:semisimpledirectsum}
Let $M \in {}_A\FMod$. The following 
assertions hold:
\begin{itemize}[widest=(a)]

\item[{\rm (a)}] The module $M$ is 
semisimple in ${}_A\FMod$
if and only if every submodule and 
every quotient
module of $M$ is semisimple in ${}_A\FMod$.

\item[{\rm (b)}] Suppose 
$M = \oplus_{i \in I} M_i$
for some submodules $(M_i)_{i \in I}$. 
Then $M$ is 
semisimple in ${}_A\FMod$ 
if and only if each $M_i$
is semisimple in ${}_A\FMod$.

\end{itemize}
\end{prop}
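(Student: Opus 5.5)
The plan is to use Proposition~\ref{prop:semisimpleequiv} throughout, so that semisimple, subobject semisimple and quotient semisimple may be used interchangeably for firm modules over the left s-unital ring $A$. We freely use that submodules, quotients and direct sums of firm modules are again firm, by \eqref{eq:threeequal}.

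For (a), the direction ``$\Leftarrow$'' is trivial, since $M$ is a submodule of itself. For ``$\Rightarrow$'', let $N \subseteq M$ be a submodule. We show $N$ is subobject semisimple. Let $K \subseteq N$ be a submodule. Since $M$ is subobject semisimple, the inclusion $K \hookrightarrow M$ has a retraction $r : M \to K$. Then $r|_N : N \to K$ is a retraction of $K \hookrightarrow N$. By Proposition~\ref{prop:monoisinj} and Proposition~\ref{prop:bijections}, every monomorphism into $N$ is, up to equivalence, such an inclusion, so $N$ is subobject semisimple.

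For a quotient $M/L$, we argue dually with quotient semisimplicity. Given an epimorphism $q : M/L \to Q$, the composite $q \circ p : M \to Q$, with $p : M \to M/L$ canonical, is an epimorphism. It therefore has a section $s$, and then $p \circ s$ is a section of $q$. Alternatively, $M = L \oplus L'$ with $L' \cong M/L$, which reduces this case to the submodule case.

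For (b), ``$\Rightarrow$'' follows from (a), since each $M_i$ is a submodule. For ``$\Leftarrow$'', write each $M_i$ as a direct sum $\oplus_{j \in J_i} M_{ij}$ of simple submodules. Then $M = \oplus_{i}\oplus_{j} M_{ij}$ is a direct sum of simple submodules, and these are simple in ${}_A\FMod$ as submodules of $M$. This is immediate from associativity of internal direct sums.

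The only point needing care is in (a): justifying that it suffices to split inclusions of submodules rather than arbitrary monomorphisms. This is handled by Proposition~\ref{prop:monoisinj}(a) together with the bijection $\gamma$ of Proposition~\ref{prop:bijections}. A monomorphism $f : P \to N$ is injective, so $f$ is $f(P) \hookrightarrow N$ composed with an isomorphism. A retraction of the inclusion, composed with the inverse isomorphism, then gives a retraction of $f$.
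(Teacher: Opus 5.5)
Your proof is correct and is essentially the paper's argument: for submodules you restrict a retraction $r : M \to K$ to $N$, where the paper intersects a complement, $N = P \oplus (N \cap Q)$, and your treatment of (b) is the same as the paper's. The only variation is in the quotient case, where you lift a section of $q \circ p$ through $p : M \to M/L$ instead of passing to a complement $R \cong M/N$ (the paper's route, which you also mention); your explicit reduction of arbitrary monomorphisms to inclusions via Propositions~\ref{prop:monoisinj} and~\ref{prop:bijections} fills in a step the paper leaves implicit.
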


\begin{proof}
(a) The ``if'' part is trivial, since $M$ is both a submodule and a quotient module of itself.
Now suppose that $M$ is semisimple in ${}_A\FMod$.
Let $N$ be a submodule of $M$.
We first show that $N$ is semisimple.
Let $P$ be a submodule of $N$.
By Proposition~\ref{prop:semisimpleequiv}, $M$ is subobject semisimple, so there exists a submodule $Q$ of $M$ such that
$M = P\oplus Q$. Hence
$N = N\cap M = N\cap (P\oplus Q) = P\oplus (N\cap Q)$.
Thus every submodule of $N$ is a direct summand of $N$, so $N$ is subobject semisimple.
By Proposition~\ref{prop:semisimpleequiv}, $N$ is semisimple.
Now we show that $M/N$ is semisimple.
Again by Proposition~\ref{prop:semisimpleequiv}, 
there exists a submodule $R$ of $M$ such that
$M = N \oplus R$.
Since $R$ is a submodule of the semisimple module $M$, 
the first part shows that $R$ is semisimple.
Moreover, $M/N \cong R$. Hence $M/N$ is semisimple.

(b)
If $M$ is semisimple, then each $M_i$ is a submodule of $M$, 
so $M_i$ is semisimple by (a).
Conversely, suppose that each $M_i$ is semisimple.
For each $i\in I$, write
$M_i = \oplus_{j\in J_i} S_{ij}$,
where each $S_{ij}$ is simple. Then
$M = \oplus_{i\in I} M_i
= \oplus_{i\in I} \oplus_{j\in J_i} S_{ij}$,
so $M$ is a direct sum of simple submodules.
Hence $M$ is semisimple in ${}_A\FMod$.
\end{proof}

We say that $A$ is left semisimple 
if $A$ is semisimple in ${}_A \FMod$.

\begin{prop}\label{prop:Asemisimpleprojective}
The ring $A$ is left semisimple if and only if 
every module in ${}_A \FMod$ is semisimple.
\end{prop}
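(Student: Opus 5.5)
The ``if'' direction is immediate, since $A$ itself lies in ${}_A\FMod$: $A$ is left s-unital, hence s-unital as a left module over itself, hence firm by Proposition~\ref{prop:sunitalimpliesfirm}. For the ``only if'' direction, assume $A$ is semisimple in ${}_A\FMod$ and let $M \in {}_A\FMod$. The plan is to exhibit $M$ as a quotient of a direct sum of copies of $A$, and then invoke Proposition~\ref{prop:semisimpledirectsum}.

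First, I will form the left $A$-module $F := \oplus_{m \in M} A$ and the map $\pi : F \to M$ sending $(a_m)_{m\in M}$ to $\sum_m a_m m$. The module $F$ lies in ${}_A\FMod$: each summand is s-unital, and a direct sum of s-unital modules is s-unital. To see this, an element has finitely many nonzero coordinates, and Proposition~\ref{prop:sunitalequivalent} supplies one $a \in A$ acting as identity on all of them. Then \eqref{eq:threeequal} gives firmness. The map $\pi$ is $A$-linear. It is surjective because $M$ is s-unital: each $m \in M$ satisfies $m = am$ for some $a\in A$, so $m = \pi(x)$ where $x$ has $a$ in the $m$-th coordinate and $0$ elsewhere.

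Next, $F$ is semisimple in ${}_A\FMod$ by Proposition~\ref{prop:semisimpledirectsum}(b), since each summand is a copy of $A$ and $A$ is assumed semisimple. Hence $F/\Ker(\pi)$ is semisimple by Proposition~\ref{prop:semisimpledirectsum}(a). Here $\Ker(\pi)$ is a submodule in ${}_A\FMod$ by the remarks preceding Proposition~\ref{prop:monoisinj}. Since $M \cong F/\Ker(\pi)$, and a module isomorphic to a direct sum of simple submodules is itself such a direct sum (transport the decomposition along the isomorphism), $M$ is semisimple.

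The only step requiring care is checking that $F$ and the kernel genuinely stay inside ${}_A\FMod$, so that Proposition~\ref{prop:semisimpledirectsum} applies. This is precisely where left s-unitality of $A$ is used, through Proposition~\ref{prop:sunitalequivalent} and \eqref{eq:threeequal}. Everything else is formal.
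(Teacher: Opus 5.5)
Your proposal is correct and takes the same route as the paper: you write $M$ as a quotient of $\oplus_{m\in M} A$ via $(a_m)_m \mapsto \sum_m a_m m$, then apply Proposition~\ref{prop:semisimpledirectsum}(b) and then (a). The extra checks you include (that the direct sum and $\Ker(\pi)$ lie in ${}_A\FMod$, and surjectivity via s-unitality rather than $AM=M$) only make explicit details the paper leaves implicit.
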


\begin{proof}
The ``if'' direction is immediate.
Now we show the ``only if'' direction.
Suppose that $A$ is left semisimple.
Let $M$ be a module in ${}_A \FMod$. 
Since $M$ is unitary, we have $AM=M$ so that
$M=\sum_{m \in M}Am$.
Let $A^{(M)}$ denote the direct sum 
of $|M|$ copies of $A$. 
Define a surjective $A$-linear map
$p : A^{(M)} \ni (a_m)_{m \in M} \mapsto 
\sum_{m \in M} a_m m \in M$.
By Proposition~\ref{prop:semisimpledirectsum}(b), 
$A^{(M)}$ is semisimple.
Therefore, by Proposition~\ref{prop:semisimpledirectsum}(a),
$M \cong A^{(M)}/\Ker(p)$ is semisimple.
\end{proof}

\subsubsection*{Proof of Theorem \ref{thm:main4}}
Let $A$ and $B$ be left s-unital rings
with $B$ left semisimple.
Suppose that $f : B \to A$ is a left s-unital 
ring homomorphism such that $A/B$ 
is separable. We wish to show that $A$ is left semisimple.
Let $M \in {}_A\FMod$.
By Proposition~\ref{prop:Asemisimpleprojective}, the module $\Res_f(M)$ is semisimple in ${}_B\FMod$.
By Proposition~\ref{prop:semisimpleequiv}, it is 
therefore subobject semisimple in ${}_B\FMod$.
By Theorem~\ref{thm:main3}, $\Res_f$ is separable.
Clearly, $\Res_f$ preserves monomorphisms.
By Proposition~\ref{prop:Fsemisimple}, $M$ is subobject semisimple in ${}_A\FMod$.
Hence, by Proposition~\ref{prop:semisimpleequiv}, $M$ is semisimple in ${}_A\FMod$.
Therefore, by Proposition~\ref{prop:Asemisimpleprojective}, $A$ is left semisimple. \qed

\subsubsection*{Proof of Theorem \ref{thm:main5}}
Let $G$ be a finite group 
of order $n:=|G|$.
Let $B$ be a ring with local units
satisfying $nB = B$. 
Let $A$ denote the group ring $B[G]$. 
We wish to show that $A/B$ is separable.
To this end, we will
construct an $A$-bimodule splitting $\sigma$ of the multiplication map
$\mu_{A/B} : A \otimes_B A \to A$.
Take $a \in A$ and an idempotent $e \in B$ such that $ae=a=ea$.
Since $nB=B$, there exists $b \in B$ such that $nb=e$.
Then $(ne)(ebe)=e(nb)e=e^2ee=e$ and, analogously,
$(ebe)(ne)=e$.
Hence $ne$ is invertible in the unital ring $eBe$.
Define
$\sigma(a):=\sum_{g\in G} ag \otimes (ne)^{-1}g^{-1}$.
Then
\[
\begin{aligned}
(\mu_{A/B}\circ \sigma)(a)
&=
\sum_{g\in G} ag(ne)^{-1}g^{-1} 
=
\sum_{g\in G} a(ne)^{-1}gg^{-1} \\
&=
\sum_{g\in G} a(ne)^{-1} 
=
n\,a(ne)^{-1} 
=
a(ne)(ne)^{-1} 
= ae = a.
\end{aligned}
\]
Therefore, $\sigma$ is a splitting of $\mu_{A/B}$.
Clearly, $\sigma$ is left $A$-linear.

Next we show that $\sigma$ is well defined.
Suppose that $e' \in B$ is another idempotent such that $ae'=a=e'a$.
Take an idempotent $f \in B$ such that
$ef=fe=e$ and $e'f=fe'=e'$.
Let $c=(nf)^{-1}\in fBf$, so that
$(nf)c=f$. Then
$(ne)c=e(nf)c=ef=e$, so
$ec=(ne)^{-1}$.
Since $c=(nf)^{-1}$, it follows that
$e(nf)^{-1}=(ne)^{-1}$.
Similarly, $e'(nf)^{-1}=(ne')^{-1}$.
Therefore,
\[
\begin{array}{l}
\displaystyle
\sum_{g \in G} ag \otimes (ne)^{-1}g^{-1}
=
\sum_{g \in G} ag \otimes e(nf)^{-1}g^{-1}
=
\sum_{g \in G} aeg \otimes (nf)^{-1}g^{-1}
\\[10pt]
\displaystyle
=
\sum_{g \in G} ae'g \otimes (nf)^{-1}g^{-1}
=
\sum_{g \in G} ag \otimes e'(nf)^{-1}g^{-1}
=
\sum_{g \in G} ag \otimes (ne')^{-1}g^{-1}.
\end{array}
\]

Finally, we show that $\sigma$ is right $A$-linear.
Take $b \in B$ and $h \in G$.
Choose an idempotent $e \in B$ such that
$ae=a=ea$ and $be=b=eb$. Since
$b(ne)=nb=(ne)b$, it follows that
$(ne)^{-1}b=b(ne)^{-1}$. Hence
\[
\begin{aligned}
\sigma(a)bh
&=
\sum_{g\in G} ag \otimes (ne)^{-1}g^{-1}bh 
=
\sum_{g\in G} ag \otimes b(ne)^{-1}g^{-1}h \\
&=
\sum_{p\in G} ahp \otimes b(ne)^{-1}p^{-1} 
=
\sum_{p\in G} abhp \otimes (ne)^{-1}p^{-1} 
=
\sigma(abh),
\end{aligned}
\]
where $p := h^{-1}g$.
Thus $\sigma$ is right $A$-linear.
Therefore, $A/B$ is separable.
\qed

\subsubsection*{Proof of Theorem \ref{thm:main6}}
Let $G$ be a finite group 
of order $|G|$.
Let $B$ be a semisimple ring with local units 
satisfying $|G|B = B$. 
Put $A := B[G]$. We wish to show that $A$ is 
left semisimple.
Since $B$ is locally unital,
the ring inclusion $B \to A$
is left s-unital. Therefore, by Theorem
\ref{thm:main4} and Theorem \ref{thm:main5},
$A$ is left semisimple.
\qed 

\begin{rem}
It is easy to produce examples of nonunital rings \(B\) 
satisfying the hypotheses of
Theorem~\ref{thm:main6} for any finite group \(G\).
Let \(K\) be a field of characteristic zero and let
\(B = K^{(\N)}\) be the ring from Example~\ref{ex:examplesrings}(a).
Then \(B\) is semisimple and nonunital, 
yet has local units, and satisfies
\(nB = B\) for every \(n \in \N\).
Hence Theorem~\ref{thm:main6} applies to the group ring \(B[G]\) for any finite group \(G\).

Moreover, one can even find simple nonunital rings
$B$ with local units such that 
\(nB = B\) for all \(n \in \N\).
A standard example is the ring 
$\mathrm{FM}_{\N}(K)$ of finitary $\N \times \N$ matrices over $K$, that is, the ring consisting of all matrices $(a_{ij})_{i,j \in \N}$ with $a_{ij} \in K$ such that $a_{ij}=0$ for all but finitely many pairs $(i,j) \in \N \times \N$, over 
a field $K$ of characteristic zero.
This ring has local units, since any finite set of 
matrices is supported on finitely many rows and columns, 
and the diagonal idempotent corresponding to those rows 
and columns acts as a local unit for that set.
We leave the simplicity
verification to the reader.
\end{rem}

Let $A$ be a left s-unital ring.
We say that $A$ is  
left hereditary if for every 
projective left $A$-module $M$ in
${}_A\FMod$, all $A$-submodules of $M$ 
are again projective.

\begin{thm}\label{thm:ABhereditary}
Let $A$ and $B$ be rings with $A$ s-unital and
$B$ left hereditary.
Let $f : B \to A$ be a left firm 
ring homomorphism with $A/B$ is separable.
Suppose that 
$\Res_f : {}_A \FMod \to {}_B \FMod$ 
preserves projective modules. 
Then $A$ is left hereditary.
\end{thm}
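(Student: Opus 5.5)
The plan is to combine Theorem~\ref{thm:main3} with Proposition~\ref{prop:Fseparableprojective}(a). Let $M$ be a projective module in ${}_A\FMod$ and $N \subseteq M$ an $A$-submodule; we must show that $N$ is projective in ${}_A\FMod$. Since $A$ is s-unital (in particular left s-unital), Proposition~\ref{prop:sunitalimpliesfirm} gives ${}_A\FMod = {}_A\UMod = {}_A\SMod$. Hence $N$, being a submodule of the s-unital module $M$, is itself s-unital and therefore firm. It is also legitimate to apply Theorem~\ref{thm:main3} here: an s-unital ring is firm (Proposition~\ref{prop:sunitalimpliesfirm} applied to ${}_AA$) and $f$ is left firm, so the theorem shows that $\Res_f : {}_A\FMod \to {}_B\FMod$ is separable.

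Next we pass to $B$. By hypothesis $\Res_f$ preserves projective modules, so $\Res_f(M)$ is projective in ${}_B\FMod$. Moreover $\Res_f(N)$ is a $B$-submodule of $\Res_f(M)$ lying in ${}_B\FMod$, because $\Res_f$ maps ${}_A\FMod$ into ${}_B\FMod$ when $f$ is left firm. Since $B$ is left hereditary, $\Res_f(N)$ is projective in ${}_B\FMod$. A small point needs checking here: the definition of left hereditary is phrased for left s-unital rings. I would read it for $B$ in the literal form "submodules in ${}_B\FMod$ of projectives in ${}_B\FMod$ are projective". If instead one wants $B$ to be left s-unital, this should follow from $f$ being left firm together with $A$ being s-unital, but I would try to avoid needing that.

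Finally we lift projectivity back to $A$. To apply Proposition~\ref{prop:Fseparableprojective}(a) to $\F = \Res_f$ and $P = N$, we need $\Res_f$ to preserve epimorphisms in ${}_A\FMod \to {}_B\FMod$. By Proposition~\ref{prop:monoisinj}(b), the epimorphisms in ${}_A\FMod$ are exactly the surjections, since $A$ is left s-unital. Restriction of scalars leaves the underlying map unchanged, so surjections go to surjections, and a surjection is always an epimorphism in ${}_B\FMod$, being an epimorphism already in ${}_B\Mod$. Therefore $\Res_f$ preserves epimorphisms, and Proposition~\ref{prop:Fseparableprojective}(a) shows that $N$ is projective in ${}_A\FMod$. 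Hence $A$ is left hereditary.

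The main obstacle is bookkeeping rather than mathematics: we must ensure that every module and morphism stays inside the firm categories. This means checking three things: submodules of firm $A$-modules are firm (handled by the s-unitality of $A$); the $B$-side notion of hereditary applies to $\Res_f(N)$; and epimorphism preservation is understood in the firm categories. Note that we do not need a characterization of epimorphisms in ${}_B\FMod$, only that surjections are epimorphisms there.
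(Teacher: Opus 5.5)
Your proof is correct and follows the paper's argument. In both, Theorem~\ref{thm:main3} gives separability of $\Res_f$, the hereditary hypothesis on $B$ makes $\Res_f(N)$ projective, and Proposition~\ref{prop:Fseparableprojective}(a) lifts projectivity back to $N$; your extra bookkeeping ($N$ is firm, $A$ is a firm ring, and $\Res_f$ preserves epimorphisms via Proposition~\ref{prop:monoisinj}(b)) correctly fills in details the paper leaves implicit or dismisses with ``clearly''.
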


\begin{proof}
Let $M \in {}_A \FMod$ be projective and
let $N$ be a submodule of $M$.
By the assumptions,
$\Res_f(M)$ is projective in  ${}_B \FMod$.
Since $B$ is left hereditary, also
$\Res_f(N)$ is projective in  ${}_B \FMod$.
By Theorem \ref{thm:main3},
$\Res_f$ is separable.
Clearly, $\Res_f$ preserves epimorphisms. 
Thus, by 
Proposition~\ref{prop:Fseparableprojective},
$N$ is projective in ${}_A \FMod$.
Therefore, $A$ is left hereditary. 
\end{proof}

\noindent
\textbf{Acknowledgement.}
The author is grateful to the anonymous referee for many
 helpful comments and suggestions that has greatly improved the quality of the manuscript, and in particular for suggesting Lemma~\ref{lem:limitsreflect}.

\end{document}